\def\subsubsection{\@startsection{subsubsection}{3}
  \z@\z@{-\fontdimen2\font}
  {\normalfont\bfseries}}
\def\paragraph{\@startsection{paragraph}{4}
  \z@\z@{-\fontdimen2\font}
  {\normalfont\bfseries}}
\newcommand{\proofstep}[1]{
  \par
  \addvspace{\medskipamount}
  \textit{#1\@addpunct{.}}\enspace\ignorespaces
}
\newcommand{\trans}{{\ensuremath{\tt t}}}
\renewcommand{\d}{\ensuremath{\mathrm{d}}}
\newcommand{\R}{\ensuremath{\mathbb{R}}}
\newcommand{\E}{\ensuremath{\mathbb{E}}}
\newcommand{\PP}{\ensuremath{\mathbb{P}}}
\newcommand{\C}{\ensuremath{\mathbb{C}}}
\newcommand{\N}{\ensuremath{\mathbb{N}}}
\newcommand{\LL}{\ensuremath{\mathcal L}}
\newcommand{\eps}{\ensuremath{\varepsilon}}
\newcommand{\verti}[1]{\ensuremath{\left\lvert #1 \right\rvert}}
\newcommand{\vertii}[1]{\ensuremath{\left\lVert #1 \right\rVert}}
\newcommand{\vertiii}[1]{{\left\lvert\kern-0.25ex\left\lvert\kern-0.25ex\left\lvert #1
    \right\rvert\kern-0.25ex\right\rvert\kern-0.25ex\right\rvert}}
\renewcommand{\d}{\ensuremath{{\rm d}}}
\newcommand{\eoperp}{\ensuremath{{_{\sqrt\eps}\operp}}}
\newtheorem{theorem}{Theorem}[section]
\newtheorem{definition}[theorem]{Definition}
\newtheorem{proposition}[theorem]{Proposition}
\newtheorem{remark}[theorem]{Remark}
\newtheorem{lemma}[theorem]{Lemma}
\newtheorem{globalassumptions}[theorem]{Global Assumptions}
\newtheorem{globalassumption}[theorem]{Global Assumption}
\newtheorem{assumptions}[theorem]{Assumptions}
\numberwithin{equation}{section}
\def\R{\mathbb{R}}
\def\C{\mathbb{C}}
\def\N{\mathbb{N}}
\def\E{\mathbb{E}}
\newcommand{\be}{\begin{equation}}
\newcommand{\ee}{\end{equation}}
\newcommand{\bea}{\begin{eqnarray}}
\newcommand{\eea}{\end{eqnarray}}
\newcommand{\beann}{\begin{eqnarray*}}
\newcommand{\eeann}{\end{eqnarray*}}
\newcommand{\benn}{\begin{equation*}}
\newcommand{\eenn}{\end{equation*}}
\newcommand{\cA}{{\mathcal A}}  
\newcommand{\cB}{{\mathcal B}}  
\newcommand{\cF}{{\mathcal F}}  
\newcommand{\cL}{{\mathcal L}}  
\newcommand{\cM}{{\mathcal M}}  
\newcommand{\cO}{{\mathcal O}}  
\newcommand{\cR}{{\mathcal R}}  
\newcommand{\cT}{{\mathcal T}}  
\newcommand{\cU}{{\mathcal U}}  
\newcommand{\cV}{{\mathcal V}}  
\DeclareMathOperator{\argmin}{\mathrm{argmin}}
\DeclareMathOperator{\esssup}{\mathrm{ess-sup}}
\DeclareMathOperator{\ind}{\mathrm{ind}}
\DeclareMathOperator{\diag}{\mathrm{diag}}
\DeclareMathOperator{\id}{\mathrm{id}}
\newcommand{\dualpair}[4]{\ensuremath{\,_{#1\kern-0.2ex}\left\langle#2,#3\right\rangle_{#4}}}
\newcommand{\inner}[3]{\ensuremath{\left(#1,#2\right)_{#3}}}
\renewcommand{\Re}{\mathrm{Re\,}}
\renewcommand{\Im}{\mathrm{Im\,}}
\begin{document}
%
\title[Multiscale analysis for the stochastic FitzHugh-Nagumo equations]{Multiscale analysis for traveling-pulse solutions to the stochastic FitzHugh-Nagumo equations}
\keywords{FitzHugh-Nagumo equations, stochastic reaction-diffusion equations, traveling waves, pulse, stability}
\subjclass[2010]{35C07, 35K57, 35Q92, 35R60, 60H15}
\thanks{MVG is grateful to Mark C.~Veraar for discussions. CK appreciates discussions with Wilhelm Stannat. The authors thank Alexandra Neamtu for advice and a careful reading of the manuscript. Several remarks of the anonymous reviewers have helped to improve the content and presentation of this revised version. KE acknowledges partial support from the European Union's Horizon 2020 research and innovation programme under the Marie Sk\l odowska-Curie grant agreement \# 754362. KE and MVG have been partially supported by the Deutsche Forschungsgemeinschaft (German Research Foundation -- DFG) under project \# 334362478. CK is supported by the Volkswagen-Stiftung through a Lichtenberg Professorship. KE appreciates the kind hospitality of Delft University of Technology. MVG appreciates the kind hospitality of Heidelberg University and the Technical University of Munich.\newline
\includegraphics[width=1.5cm]{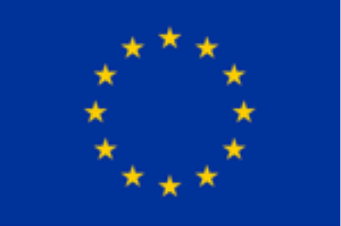}}
\date{\today}
\author{Katharina Eichinger}
\address[Katharina Eichinger]{CEREMADE, Universit\'e Paris Dauphine, PSL, Pl. de Lattre Tassigny, 75775 Paris Cedex 16, France and INRIA-Paris, MOKAPLAN, 2 Rue Simone IFF, 75012 Paris, France}
\email{eichinger@ceremade.dauphine.fr}
\author{Manuel V.~Gnann}
\address[Manuel V.~Gnann]{Delft Institute of Applied Mathematics, Faculty of Electrical Engineering, Mathematics and Computer Sciences, Delft University of Technology, Van Mourik Broekmanweg 6, 2628 XE Delft, Netherlands}
\email{M.V.Gnann@tudelft.nl}
\author{Christian Kuehn}
\address[Christian Kuehn]{Center for Mathematics, Technical University of Munich, Boltzmannstr.~3, 85747 Garching near Munich, Germany}
\email{ckuehn@ma.tum.de}
\begin{abstract}
We investigate the stability of traveling-pulse solutions to the stochastic FitzHugh-Nagumo equations with additive noise. Special attention is given to the effect of small noise on the classical deterministically stable fast traveling pulse. Our method is based on adapting the velocity of the traveling wave by solving a scalar stochastic ordinary differential equation (SODE) and tracking perturbations to the wave meeting a system of a scalar stochastic partial differential equation (SPDE) coupled to a scalar ordinary differential equation (ODE). This approach has been recently employed by Kr\"uger and Stannat [Nonlinear
Anal., 162:197--223, 2017] for scalar stochastic bistable reaction-diffusion equations such as the Nagumo equation. A main difference in our situation of an SPDE coupled to an ODE is that the linearization has essential spectrum parallel to the imaginary axis and thus only generates a strongly continuous semigroup. Furthermore, the linearization around the traveling wave is not self-adjoint anymore, so that fluctuations around the wave cannot be expected to be orthogonal in a corresponding inner product. We demonstrate that this problem can be overcome by making use of Riesz instead of orthogonal spectral projections as recently employed in a series of papers by Hamster and Hupkes in case of analytic semigroups. We expect that our approach can also be applied to traveling waves and other patterns in more general situations such as systems of SPDEs with linearizations only generating a strongly continuous semigroup. This provides a relevant generalization as these systems are prevalent in many applications.
\end{abstract}
\maketitle
\tableofcontents
%

\section{Introduction}
\subsection{The stochastic FitzHugh-Nagumo equations}
We consider the \emph{stochastic FitzHugh-Nagumo equations}
\begin{subequations}\label{sfhn}
 \begin{align}
  \d \tilde u(t,x) &= \left(\nu \partial_x^2 \tilde u(t,x) + f\left(\tilde u(t,x)\right) - \tilde v(t,x)\right) \d t + \sigma \, \d W(t,x) \quad \mbox{for} \quad (t,x) \in \R_+ \times \R, \label{sfhn_1}\\
  \d \tilde v(t,x) &= \eps \left(\tilde u(t,x) - \gamma \tilde v(t,x)\right) \d t \quad \mbox{for} \quad (t,x) \in \R_+ \times \R, \label{sfhn_2}
 \end{align}
\end{subequations}
in which the independent variables $t$ and $x$ denote time and position on a neural axon, respectively. The dependent variable $\tilde u$ denotes the \emph{electric potential} and $\tilde v$ is a \emph{gating variable}. The parameter $\nu > 0$ determines the strength of the \emph{diffusion} $\partial_x^2 \tilde u$, while the nonlinearity $f\left(\tilde u\right)$ is a reaction term which typically has the form $f\left(\tilde u\right) = \chi (\tilde u) \, \tilde u \left(1 - \tilde u\right) \left(\tilde u - a\right)$ with $0 < a < 1$ and is suitably cut off by the factor $\chi$. The parameter $\sigma > 0$ determines the strength of the noise $W$. Here, we assume that $W$ is an infinite-dimensional Wiener process taking values in a Hilbert space to be specified in what follows. The parameter $\eps > 0$ determines the strength of the coupling of the electric potential $\tilde u$ to the gating variable $\tilde v$ and is assumed to be sufficiently small. The parameter $\gamma > 0$ determines the decay of the gating variable $\tilde v$.\medskip 

The \emph{classical FitzHugh-Nagumo}~\cite{FitzHugh,Nagumo} \emph{partial differential equations} (PDEs) obtained for $\sigma=0$ form a simplified, yet qualitatively very similar, model for the \emph{Hodgkin-Huxley equations}~\cite{HodgkinHuxley4}, which was a key part of Hodgkin's and Huxley's Nobel prize awarded in 1963. By now, the FitzHugh-Nagumo system is a standard model for the generation and transmission of electrical signals in neuroscience~\cite{ErmentroutTerman,Izhikevich1}. The literature on the PDE version of the FitzHugh-Nagumo system ($\sigma=0$) is very large, see for example the recent papers~\cite{CarterSandstede,GuckenheimerKuehn1} and detailed references therein. For the ODE version ($\sigma=0$, $\nu=0$), the literature is vast~\cite{GGR}, mostly due to the crucial role played by bistable nonlinearities in all areas of nonlinear science and the commonly found multiple time scale structure of the FitzHugh-Nagumo system~\cite{KuehnBook}. Also the SODE variant for $\nu=0$ is quite well-studied, mainly due to a flurry of activity since the mid 1990s; see e.g.~\cite{BashkirtsevaRyashko,BerglundGentzKuehn1,BerglundLandon,Lindneretal,LindnerSchimansky-Geier,MuratovVanden-Eijnden}. Yet, the full SPDE variant~\eqref{sfhn} has only attracted major attention quite recently, including a large number of numerical studies~\cite{LordPowellShardlow,SauerStannat,SauerStannat2,Shardlow,Tuckwell1,Tuckwell2,TuckwellRodriguez} as well as analytical studies regarding existence, regularity, invariant measures and attractors~\cite{BerglundKuehn,BonaccorsiMastrogiacomo,KuehnNeamtuPein,LiYin,BatesLuWang,Wang4}. The question regarding stochastic stability of pulses for additive noise is far less studied. We refer to \cite{HamsterHupkes2019} for multiplicative noise with a regularized equation (diffusion in the second variable) and to the review~\cite{KuehnSPDEwaves} for the stochastic Nagumo case ($\eps=0$). For further biophysical motivation regarding various noise terms in the FitzHugh-Nagumo equation we refer to the review~\cite{Lindneretal}.

\medskip 

Here we contribute to a more detailed understanding of stochastic pulse stability of the FitzHugh-Nagumo equations \eqref{sfhn} exploiting the \emph{multiscale} nature of the problem. More precisely, our aim is to understand the dynamics of \eqref{sfhn} near a deterministically stable pulse in the regime, where the parameters $\sigma > 0$ (strength of the noise) and $\eps > 0$ (coupling to the gating variable) are small. The subsequent analysis generalizes the recent analysis of Kr\"uger and Stannat \cite{KruegerStannat2017} for corresponding scalar stochastic bistable reaction-diffusion equations such as the Nagumo equation. We remark that the idea of tracking small noise fluctuations for SPDEs around traveling waves via a multiscale SODE approximation goes back at least to the early 1980s and works by Ebeling, Mikhailov, and Schimansky-Geier \cite{MikhailovSchimanskyGeierEbeling,SchimanskyGeierMikhailovEbeling}. From the viewpoint of applications the extension from Nagumo to FitzHugh-Nagumo is a crucial generalization as the FitzHugh-Nagumo model \eqref{sfhn} is far more realistic than the one studied in \cite{KruegerStannat2017} since only \eqref{sfhn} allows for deterministically stable \emph{traveling-pulse solutions}. In fact, deterministically stable localized pulses model far better the real action potentials generated in neurons in comparison to the deterministically stable traveling fronts appearing in the Nagumo equation. From a mathematical viewpoint, our generalization is important as it does extend beyond the setting of treating the equation in a Hilbert space in which the linearization of the traveling wave is self-adjoint (this is the case in \cite{KruegerStannat2017}) or generates an analytic semigroup and thereby makes the methods of multiscale approximation available to a broad class of stochastic SPDE-ODE reaction-diffusion systems. 

\subsection{Traveling-pulse solutions and their stability\label{sec:tw}}
We recall some of the well-known results on existence of traveling-pulse solutions to the deterministic version of \eqref{sfhn} where $\sigma = 0$. These solutions have the form $\tilde u(t,x) = \hat u(\xi)$ and $\tilde v(t,x) = \hat v(\xi)$, where $\xi = x + s t$ and $s \in \R$ is the velocity of the traveling wave. The tuple $\left(\hat u, \hat v\right)$ therefore fulfills the set of equations
\begin{subequations}\label{fhn_tw}
 \begin{align}
  \tfrac{\d \hat u}{\d \xi} &= \hat u^\prime \quad \mbox{for} \quad \xi \in \R, \\
  \tfrac{\d \hat u^\prime}{\d \xi} &= \frac 1 \nu \left(s \hat u^\prime - f\left(\hat u\right) + \hat v\right) \quad \mbox{for} \quad \xi \in \R, \\
  \tfrac{\d \hat v}{\d \xi} &= \frac{\eps}{s} \left(\hat u - \gamma \hat v\right) \quad \mbox{for} \quad \xi \in \R.
 \end{align}
\end{subequations}
Traveling-pulse solutions to \eqref{fhn_tw} are solutions $\left(s, \hat u, \hat v\right)^{\trans}$ such that $\left(\hat u, \hat u^\prime, \hat v\right)^{\trans} \to (b_1,b_2,b_3)^{\trans}$ as $\xi \to \pm \infty$ where $(b_1,b_2,b_3)^{\trans} \in \R^3$ is a stationary solution to \eqref{fhn_tw} fulfilling
\[
b_2 = 0, \quad f(b_1) = b_3, \quad \mbox{and} \quad b_1 = \gamma b_3, 
\]
i.e., they are homoclinic orbits of the dynamical system \eqref{fhn_tw}. In what follows we will also have the convention that we mean non-trivial traveling pulses. Furthermore, we are only interested in solutions $\left(\hat u, \hat v\right)^{\trans}$ such that $\left(\hat u, \hat v\right)^{\trans} \to (0,0)^{\trans}$ as $\xi \to \pm \infty$. Indeed, this situation will occur if the equilibrium point $(b_1,b_2,b_3)^{\trans} = (0,0,0)^{\trans}$ is unique, which is the case e.g.~if $\gamma \ge 0$ is sufficiently small~\cite{GGR}. Further note that the velocity $s$ of the traveling wave is not a parameter but a functional of $f$ and $\eps$.

\medskip

Next, we give a very brief overview of the existing literature on the existence of homoclinic orbits of \eqref{fhn_tw} corresponding to traveling-pulse solutions. For a wave speed $s=0$ and $\eps=0$, we recover the planar ODE associated to traveling waves of the Nagumo equation. Using the resulting Hamiltonian structure of the ODE, it is easy to see that a homoclinic orbit exists for $s=0$ and $\eps=0$. A singular perturbation argument in combination with Melnikov's method~\cite{KuehnBook,Szmolyan1,Hastings1976} yields the existence of a \emph{slow pulse} with wave speed $s\approx 0$. Yet, an application of Sturm-Liouville theory~\cite{KapitulaPromislow2013,KuehnBook1} shows that the slow pulse is unstable. As it is deterministically already unstable, considering this pulse under the influence of noise is not expected to be biophysically relevant as the noisy small perturbations will be amplified exponentially near the slow pulse. Yet, there is also a \emph{fast pulse} corresponding to much higher wave speeds $s=s(f,\eps)$. Carpenter~\cite{Carpenter1974} and Conley~\cite{Conley1975} constructed these homoclinic orbits to \eqref{fhn_tw} employing the method of isolating blocks of the fast and slow subsystems~\cite{ConleyEaston1971}. See also~\cite{GardnerSmoller1983}, where the methods developed in~\cite{Conley1975} have been further improved. Then a generalization of the pulse construction to large classes of FitzHugh-Nagumo-like models has been provided in~\cite{EvansFenichelFeroe}. Later, a fully geometric construction of the fast pulse via the Exchange Lemma~\cite{JonesKopell,JonesKaperKopell,KuehnBook} was proved by Jones, Kopell and Langer~\cite{JonesKopellLanger} using differential forms. The connection in parameter space between slow and fast pulses has been proved in~\cite{KrupaSandstedeSzmolyan}. Then the parametric bifurcation structure has been analyzed in a more refined way in~\cite{CarterSandstede,GuckenheimerKuehn1,GuckenheimerKuehn3,Sneydetal}. A non-perturbative approach to construct traveling-pulse solutions of \eqref{fhn_tw} has been carried out by Arioli and Koch~\cite{ArioliKoch2015} for the value $\eps = 0.01$ using computer-assisted proofs.

\medskip

Stability of the fast traveling-pulse solutions to the deterministic version of \eqref{sfhn} with $\sigma = 0$ in the space of bounded uniformly continuous functions has been obtained by Jones~\cite{Jones1984} for the case  of a cubic polynomial, in the sense that solutions starting sufficiently close to a wave profile decay to a translate of
\[
\left(\hat u(\cdot+s t), \hat v(\cdot+s t)\right)^{\trans}.
\]
The proof relies on analysis developed by Evans~\cite{Evans1971_72,Evans1972_73,Evans1972_73_2,Evans1974_75,Evans1976,Evans1976_2}, where it is proved that in the space of bounded uniformly continuous functions that linear stability implies nonlinear stability and that the point spectrum of the linearization around the traveling pulse is determined by the roots of a function $D(\lambda)$, the \emph{Evans function}. Jones proves that instability can only occur due to eigenvalues of the linear operator near $\lambda = 0$. By calculating the winding number of $D(\lambda)$ for a small circle around $\lambda = 0$, it is shown that only two eigenvalues lie in it, one of which is $\lambda = 0$ (related to the translation invariance of the problem) and the other is negative because $\frac{\d D}{\d \lambda}(0) > 0$. For a more recent stream-lined stability analysis allowing for more general reaction terms $f$ but still restricted to the space of bounded uniformly continuous functions, we refer to \cite{Yanagida1985,ArioliKoch2015} while stability in $L^2(\R;\R^2)$ and $H^1(\R;\R^2)$ is proved in \cite{GhazaryanLatushkinSchecter,RottmannMatthes2010,Yurov} (see \S\ref{sec:lin_intro} for further details). Nonlinear stability of the fast FitzHugh-Nagumo pulse with oscillatory tails (instead of monotone tails as considered in this paper) has been proved by Carter, de Rijk, and Sandstede in \cite{CarterdeRijkSandstede}. For general introductions into the subject, we refer to \cite{ChiconeLatushkin,Sandstede2002}.

\subsection{An approach for computing the velocity correction\label{sec:ex_approaches}}
Our aim is to investigate the following decomposition
\begin{subequations}\label{decomp_sfhn}
 \begin{align}
  \tilde u(t,x) &= \hat u\left(x+st+\varphi(t)\right) + u_\varphi(t,x), \\
  \tilde v(t,x) &= \hat v\left(x+st+\varphi(t)\right) + v_\varphi(t,x)
 \end{align}
\end{subequations}
of solutions to \eqref{sfhn}, where the function $\varphi(t)$ is a random correction to the position of the wave front, and $u_\varphi(t,x)$ and $v_\varphi(t,x)$ denote lower-order fluctuations that are uniquely defined through \eqref{decomp_sfhn} for any choice of $\varphi = \varphi(t)$. Ideally, we would like to choose $\varphi$ to minimize the distance in the direction of the traveling wave between the solution $\tilde X := (\tilde u, \tilde v)^\trans$ of the FitzHugh-Nagumo SPDEs \eqref{sfhn} and the suitably translated traveling wave $\hat X = (\hat u, \hat v)^\trans$, i.e.,
\begin{equation}\label{def_phi}
 \varphi(t) \in \argmin_{\varphi \in \R}\vertii{\Pi_{st+\varphi}^0 \left(\tilde X(t,\cdot) - \hat X(\cdot + s t + \varphi)\right)}_H^2, \quad \mbox{with} \quad \tilde X := \begin{pmatrix} \tilde u \\ \tilde v \end{pmatrix}, \quad \hat X := \begin{pmatrix} \hat u \\ \hat v \end{pmatrix},
\end{equation}
where $\vertii{\cdot}_H$ is the norm in the spatial variable of a suitable underlying Hilbert space $H$ and $\Pi_{st+\varphi}^0$ is a suitable projection operator onto the traveling wave such that $\Pi^0_{st+\varphi} \tfrac{\d \hat X}{\d\xi}(\cdot+st+\varphi) = \tfrac{\d \hat X}{\d\xi}(\cdot+st+\varphi)$ (see Proposition~\ref{prop:frozen}~\eqref{item:riesz} and \eqref{pi_st_phi} further below). However, as the minimization problem \eqref{def_phi} is not necessarily convex, uniqueness of a minimizer is not ensured. We follow the approach in \cite{KruegerStannat2017} and replace \eqref{def_phi} by the weaker condition for a critical point of finding $\varphi = \varphi(t)$ such that
\begin{equation}\label{def_phi_2}
0 = \inner{\Pi^0_{st+\varphi} \left(\tilde X(t,\cdot) - \hat X(\cdot+st+\varphi)\right)}{\tfrac{\d \hat X}{\d\xi}(\cdot+st+\varphi)}{H},
\end{equation}
where $\inner{\cdot}{\cdot}{H}$ denotes the inner product of $H$. This approach has been employed by Inglis and MacLaurin in \cite{InglisMacLaurin2016} for more general classes of SPDE systems with the drawback that results only hold up to the first stopping time when the local minimum turns into a saddle. Here, we follow the work around proposed in \cite{KruegerStannat2017} in the sense that $\varphi(t)$ is approximated by a process $\varphi^m(t)$, which in our case fulfills the random ordinary differential equation (RODE)
\begin{equation} \label{eq_phi_ks}
 \tfrac{\d \varphi^m}{\d t}(t) = m \inner{\Pi^0_{st+\varphi^m(t)} \left(\tilde X(t,\cdot) - \hat X(\cdot+st+\varphi^m(t))\right)}{\tfrac{\d \hat X}{\d\xi}\left(\cdot+st+\varphi^m(t)\right)}{H}
\end{equation}
for given initial condition and a relaxation parameter $m > 0$ that is chosen sufficiently large. Notably, approximating $\varphi$ with $ \varphi^m$ through \eqref{eq_phi_ks} implies differentiability while this is in general not true for \eqref{def_phi_2}. By further analyzing \eqref{eq_phi_ks} we will construct $\varphi$ as a solution of an SODE and $\left(u_{\varphi}, v_{\varphi}\right)^{\trans}$ as the solution of a system of an SPDE coupled to an SODE in such a way that $\left(u_{\varphi}, v_{\varphi}\right)^{\trans}$ fulfill estimates in suitable function spaces, making precise what the notion `lower order' means.

\medskip

In \cite[\S3.4]{KruegerStannat2017}, it has been pointed out that with the strategy described above the first-order fluctuations around the traveling wave are orthogonal in their suitably chosen inner product. In fact, in the case of of second-order bistable reaction-diffusion equations, the spatial linearization around the traveling wave, the \emph{frozen-wave operator}, is of Sturm-Liouville type. Hence, one can always find a weighted $L^2$-inner product in which it is self-adjoint, so that one can replace the projection operator used in \eqref{def_phi} with an orthogonal projection induced by this inner product. One cannot expect a similar approach to be applicable in our situation \eqref{sfhn} of an SPDE coupled to an ODE or other more complicated systems of SPDEs. Instead, we will build up our analysis on Riesz spectral projections of the frozen-wave operator that do not require a self-adjoint structure and yield a partition into two subspaces invariant under the linearized flow. Note that non-orthogonal Riesz spectral projections have also been employed by Hamster and Hupkes in \cite{HamsterHupkes2019,HamsterHupkes2020SIMA,HamsterHupkes2020SIADS,HamsterHupkes2020PhysD} by projecting onto the eigenvector of the adjoint of the frozen-wave operator. Their SODE to determine $\varphi$ is more involved compared to \eqref{eq_phi_ks}. Their method applies to a variety of (systems) of reaction-diffusion equations with special forms of multiplicative noise (partially only including a one-dimensional Wiener process). However, in all situations treated there, the frozen-wave operator is sectorial with spectral angle larger than $\frac \pi 2$ and therefore generates an analytic semigroup. Specifically, in \cite[(1.2)]{HamsterHupkes2019}, \cite[(1.3)]{HamsterHupkes2020SIMA}, \cite[(1.17)]{HamsterHupkes2020PhysD} the second component of the FitzHugh-Nagumo system is regularized by adding $c \partial_x^2 \tilde v$ with some $c > 0$ to the right-hand side of \eqref{sfhn_2} (see \cite{ChenChoi2015,CornwellJones} for existence and stability of the pulse in this case), which is different from the only partly parabolic FitzHugh-Nagumo system without diffusion in the second component as treated for instance in \cite{Carpenter1974,Conley1975,GardnerSmoller1983,JonesKopellLanger,Jones1984,Yanagida1985,ArioliKoch2015}. In our setting, the frozen-wave operator has essential spectrum parallel to the imaginary axis (see \S\ref{sec:lin_intro} and \S\ref{sec:lin_proof} below) and therefore is so far not covered by this approach.

\medskip

Historically, one can track back stability and fluctuation analysis of bistable scalar SPDEs, such as the Nagumo SPDE, at least to early works by Ebeling, Mikhailov, and Schimansky-Geier in \cite{MikhailovSchimanskyGeierEbeling,SchimanskyGeierMikhailovEbeling}, where it was recognized that the deterministic reference wave speed should be corrected by a stochastic term, which in turn satisfies an SODE. Many further works followed, e.g., using a more rigid/frozen stochastic frame in combination with numerical simulations by Lord and Th\"ummler in \cite{LordThuemmler2012}, employing functional inequalities to establish stability bounds by Stannat in \cite{Stannat2014}, the adaptation of a rigorous multiscale expansion by Kr\"uger and Stannat in \cite{KruegerStannat2017} originally motivated by work on the related problem of stochastic traveling waves for bistable neural field equations by the same authors in \cite{KruegerStannat2014} and by Inglis and MacLaurin in \cite{InglisMacLaurin2016}, and work on long-time stochastic stability tracking for suitable multiplicative noise by Hamster and Hupkes in \cite{HamsterHupkes2019} via stochastic convolution estimates by the same authors in \cite{HamsterHupkes2020SIADS}. We also remark that in case of stochastic dispersive PDEs, the random modulation of soliton solutions (i.e., standing or traveling wave packages) has been investigated by de Bouard and Debussche for the stochastic Korteweg-de Vries equation in \cite{BouardDebussche2007} and by de Bouard and Fukuizumi for the Gross-Pitaevskii equation in \cite{BouardFukuizumi2009}. Furthermore, multiscale expansions also frequently appear in the context of SPDE amplitude equations at bifurcation points as treated for instance by Bl\"omker in \cite{Bloemker} and by Bl\"omker, Hairer, and Pavliotis in \cite{BloemkerHairerPavliotis}.

\subsection{Outline}
We continue with the setting and auxiliary results in \S\ref{sec:set_prel}, followed by the main results in \S\ref{sec:main}. The proofs of auxiliary results are contained in Appendix~\ref{sec:proofs_prel} while the proofs of the main results can be found in \S\ref{sec:proofs_main}. In \S\ref{sec:evol} and \S\ref{sec:var_construct} we construct solutions to \eqref{sfhn} using the variational approach for equations with locally monotone coefficients~\cite{LiuRoeckner2010,LiuRoeckner2015}. In \S\ref{sec:lin_intro} and \S\ref{sec:lin_proof}, we then formulate the SPDE of perturbations around the traveling wave which to leading-order is governed by the linearization around the deterministically-stable fast FHN pulse. Results on the deterministic linearized evolution of perturbations around the traveling wave are provided in Proposition~\ref{prop:family} and Proposition~\ref{prop:frozen} below. Afterwards, in \S\ref{sec:corr_intro} and \S\ref{sec:phase}, we derive an SODE approximating the correction of the wave velocity (cf.~Proposition~\ref{prop:path_ode} below). The leading-order part of this SODE is an Ornstein-Uhlenbeck-type process with a linear damping in the drift due to the relaxation method of the frame and additive stochastic fluctuations obtained from projecting the infinite-dimensional noise onto the deterministic translation-invariant mode. Practically, this entails that the deterministic reference wave has phase diffusion along the translation direction. Subsequently, in \S\ref{sec:reduced_intro} and \S\ref{sec:leading}, we prove a multiscale expansion in terms of the linearized evolution (cf.~Theorem~\ref{th:multi} below), which is further investigated in  \S\ref{sec:immediate_intro} and \S\ref{sec:immediate} in the limit as $m \to \infty$ of immediate relaxation (cf.~Theorem~\ref{th:immediate} and Proposition~\ref{prop:moment} below). In particular, our results yield bounds on
\begin{itemize}
\item the first exit time where the multiscale decomposition cannot be guaranteed to hold anymore and 
\item on the second moment of fluctuations transverse to the traveling wave mode after correcting the wave velocity.
\end{itemize}
Concluding remarks and an outlook on future research can be found in \S\ref{sec:conclusions}.

\section{Setting and auxiliary results\label{sec:set_prel}}
For what follows, we fix a \emph{stochastic basis}, that is, a \emph{complete filtered probability space}
\[
\left(\Omega,\cF,(\cF_t)_{t \in [0,T]},\PP\right),
\]
with a complete and right-continuous filtration $(\cF_t)_{t \in [0,T]}$, where $T \in (0,\infty)$ is arbitrary.

\subsection{Existence and uniqueness of solutions using the variational approach\label{sec:evol}}
In line with the classical findings for the deterministic FitzHugh-Nagumo PDE discussed above, we make the following assumption on \eqref{sfhn} or \eqref{fhn_tw}, respectively. In particular, we are only going to study stochastic perturbations to the deterministically stable fast pulse solution:
\begin{globalassumption}\label{ass:tw}
In a right-neighborhood of $\eps = 0$ the system \eqref{fhn_tw} has a non-trivial homoclinic orbit of the point $(0,0,0)^\trans$ in phase space. The corresponding traveling-pulse solution $\left(\hat u, \hat v\right)^\trans$ of the FitzHugh-Nagumo equations \eqref{sfhn} with $\sigma = 0$ is locally asymptotically stable up to translation.
\end{globalassumption}
We will assume certain properties on the reaction term $f(w)$ that are fulfilled for instance by the choice $f(w) = \chi(w) \, w (1-w) (w-a)$, where $a \in (0,1)$ and $\chi \in C^\infty(\R)$ meets $\chi_{|[-c_1,\infty)} \equiv 1$ and $\chi(w) = \frac{c_2^2}{w^2}$ for $w \in (-\infty,-c_2]$, where $1 < c_1 < c_2$ sufficiently large (cut-off at $-\infty$), for which existence of a traveling-pulse solution (cf.~Global~Assumption~\ref{ass:tw}) is guaranteed provided $\eps > 0$ is small. Here, we directly make the usual abstract bi-stability assumptions~\cite{Chen1,Yanagida1985,KruegerStannat2017} for $f$ so that the nonlinearity effectively behaves like the classical cubic nonlinearity chosen for the Nagumo and FitzHugh-Nagumo equations.

\begin{globalassumptions}\label{ass:reaction}
We have $f \in C^3(\R)$ and there exist $a \in (0,1)$ and $d > 0$ with 
 \begin{subequations}
  \begin{align}
   f(0) &= f(a) = f(1) = 0, \\
   f(w) &< 0 \quad \mbox{for} \quad w \in (0,a) \cup (1,\infty), \\
   f(w) &> 0 \quad \mbox{for} \quad w \in (-\infty,0) \cup (a,1), \\
   f'(0) &< 0, \quad f'(a) > 0, \quad f'(1) < 0, \label{sign_f} \\
   f(w) - \frac w d &\ne 0 \quad \mbox{for} \quad w \in \R \setminus \{0\}, \\
   \int_0^1 f(w) \, \d w &> 0, \\
   \eta_1 &:= \sup_{w \in \R} f'(w) < \infty, \label{cond_f_prime} \\
   \verti{f(w_1 + w_2) - f(w_1) - f'(w_1) w_2} &\le \eta_2 \left(1+\verti{w_1}+\verti{w_2}\right) \verti{w_2}^2 \quad \mbox{for} \quad w_1, w_2 \in \R, \label{cond_f_diff_2} \\
   \verti{f'(w)} &\le \eta_3 \left(1+\verti{w}^2\right) \quad \mbox{for} \quad w \in \R, \label{cond_f_diff_3} \\
   \verti{f(w_1) - f(w_2)} &\le \eta_4 \verti{w_1-w_2} \left(1+\verti{w_1}^2+\verti{w_2}^2\right) \quad \mbox{for} \quad w_1, w_2 \in \R, \label{cond_f_diff_4} \\
   \verti{f'(w_1 + w_2) - f'(w_1) - f''(w_1) w_2} &\le \eta_5 \verti{w_2}^2 \quad \mbox{for} \quad w_1, w_2 \in \R, \label{cond_f_diff_5} \\
   \verti{f'(w_1) - f'(w_2)} &\le \eta_6 \verti{w_1 - w_2} \left(1+\verti{w_1}+\verti{w_2}\right) \quad \mbox{for} \quad w_1, w_2 \in \R, \label{cond_f_diff_6} \\
   \verti{f''(w_1) - f''(w_2)} &\le \eta_7 \verti{w_1 - w_2} \quad \mbox{for} \quad w_1, w_2 \in \R, \label{cond_f_diff_7}
  \end{align}
 \end{subequations}
 where $\eta_2, \eta_3, \eta_4, \eta_5, \eta_6, \eta_7 < \infty$.
\end{globalassumptions}
Note that the linearization of \eqref{fhn_tw} in $(0,0,0)^\trans$ only depends on $f'(0)$ and because of \eqref{sign_f} this point is hyperbolic and hence $\tfrac{\d^j \hat u}{\d\xi^j}$ for $j \in \{0,1,2,3,4,5\}$ and $\tfrac{\d^j\hat v}{\d\xi^j}$ for $j \in \{0,1,2,3,4\}$ are exponentially decaying as $\verti{\xi} \to \pm \infty$. In particular,
\[
\vertii{\tfrac{\d^j \hat u}{\d\xi^j}}_{L^\infty(\R)} < \infty \quad \mbox{and} \quad\vertii{\tfrac{\d^j \hat u}{\d\xi^j}}_{L^2(\R)} < \infty \quad \mbox{for all} \quad j \in \{0,1,2,3,4,5\},
\]
as well as
\[
\vertii{\tfrac{\d^j \hat v}{\d\xi^j}}_{L^\infty(\R)} < \infty \quad \mbox{and} \quad \vertii{\tfrac{\d^j \hat v}{\d\xi^j}}_{L^2(\R)} < \infty \quad \mbox{for all} \quad j \in \{0,1,2,3,4\}.
\]
For $\varphi \equiv 0$, we may use \eqref{decomp_sfhn} as a definition of $\left(u, v\right) := (u_0,v_0)$ and employing \eqref{fhn_tw}, we obtain the system
\begin{subequations}\label{spde_check}
 \begin{eqnarray}
  \d u(t,x) &=& \left(\nu \partial_x^2 u(t,x) + f\left(u(t,x) + \hat u(x + s t)\right) - f\left(\hat u(x + s t)\right) - v(t,x)\right) \d t \nonumber \\
  && + \sigma \, \d W(t,x), \\
  \d v(t,x) &=& \eps \left(u(t,x) - \gamma v(t,x)\right) \d t.
 \end{eqnarray}
\end{subequations}
In order to analyze \eqref{spde_check}, we use the following Hilbert space setting. Suppose that $L^2(\R)$ is the standard $L^2$-space of $\R$-valued functions on the real line,
\[
H^1(\R) := \left\{u \in L^2(\R): \, \partial_x u \in L^2(\R)\right\},
\]
where $\partial_x u$ denotes the distributional derivative, and $H^{-1}(\R)$ denotes the topological dual of $H^1(\R)$ relative to $L^2(\R)$. The Laplacian $\partial_x^2 u$ for $u \in H^1(\R)$ can be defined via its bilinear form as
\begin{equation}\label{def_laplacian}
 \dualpair{H^1(\R)}{w}{\partial_x^2 u}{H^{-1}(\R)} := - \int_\R (\partial_x w) \, (\partial_x u) \, \d x.
\end{equation}
Then, we may recast the system \eqref{spde_check} in form of the abstract stochastic evolution equation
\begin{equation}\label{stoch_evol}
 \d X(t,\cdot) = \cA(t,X(t,\cdot)) \, \d t + \cB(t,X(t,\cdot)) \, \d W_U(t,\cdot)
\end{equation}
and introduce the \emph{rigged space triple} (\emph{Gelfand triple})
\begin{equation}\label{rigged_alt}
V := H^1(\R) \eoperp L^2(\R) \hookrightarrow H := L^2(\R) \eoperp L^2(\R) = H^* \hookrightarrow V^* = H^{-1}(\R) \eoperp L^2(\R),
\end{equation}
where
\begin{subequations}\label{hv_norm}
\begin{align}
\inner{Y_1}{Y_2}{H} &:= Z \int_\R \left(\eps w_1 w_2 + q_1 q_2\right) \d x, \label{h_norm} \\
\inner{Y_1}{Y_2}{V} &:= Z \int_\R \left(\eps w_1 w_2 + \eps (\partial_x w_1) (\partial_x w_2) + q_1 q_2\right) \d x, \label{v_norm}
\end{align}
\end{subequations}
with
\begin{equation}\label{yj_z}
Y_j := \begin{pmatrix} w_j \\ q_j \end{pmatrix} \quad \mbox{and} \quad Z := \left(\int_\R \left(\eps \left(\tfrac{\d \hat u}{\d \xi}\right)^2 + \left(\tfrac{\d \hat v}{\d \xi}\right)^2\right) \d \xi\right)^{-1},
\end{equation}
and where 
\begin{subequations}\label{def_vectors}
 \begin{align}
  X &:= \begin{pmatrix} u \\ v \end{pmatrix}, \\
  \cA \colon [0,T] \times V \times \Omega &\to V^*, \nonumber \\
  (t,X,\omega) &\mapsto \cA(t,X) := \begin{pmatrix} \nu \partial_x^2 u + f\left(u + \hat u(\cdot + s t)\right) - f\left(\hat u(\cdot + s t)\right) - v \\ \eps \left(u - \gamma v\right) \end{pmatrix}, \label{def_a}\\
  \cB \colon [0,T] \times V \times \Omega &\to L_2\left(U;H\right), \nonumber \\
  (t,X,\omega) &\mapsto \left(U \to H, \quad N \mapsto \cB(t,X) N:= \begin{pmatrix} \sigma \sqrt Q N \\ 0 \end{pmatrix}\right). \label{def_b}
 \end{align}
\end{subequations}
Here, we assume that $U := L^2(\R)$, $Q \in L(U)$ is a symmetric nonnegative-definite bounded linear operator in $U$ with finite trace, $L_2\left(U;H\right)$ denotes the space of \emph{Hilbert-Schmidt operators} $U \to H$, and $W_U$ is a $U$-valued $(\cF_t)_{t \in [0,T]}$-adapted \emph{cylindrical $\id_U$-Wiener process}. Note that $W = \sqrt Q W_U$ is in fact a $U$-valued $(\cF_t)_{t \in [0,T]}$-adapted \emph{$Q$-Wiener process}. We remark that the assumption of $Q$ having finite trace leads to a noise term in \eqref{stoch_evol} which is not translation invariant. However, since we study stability of a pulse with exponential tails traveling with finite speed on an axon (which in reality has finite length), this assumption appears to be acceptable regarding the relevance of our results in terms of applications. Further note that the scaling of the first component with $\sqrt\eps$ in \eqref{hv_norm} changes the geometry of the orthogonal sums indicated by the symbol $\eoperp$ rather than $\operp$ in \eqref{rigged_alt}. On the other hand, the normalization with $Z$ merely implies the convenient property that $\vertii{\tfrac{\d\hat X}{\d\xi}}_H = 1$ but leaves all angles between vectors unchanged.

\medskip

In order to show existence of solutions to \eqref{stoch_evol}, one could use the concept of mild solutions as employed for the FitzHugh-Nagumo SPDE with additive noise in~\cite{KuehnNeamtuPein}. The approach presented in~\cite{KruegerStannat2017}, which we build upon, uses variational solutions~\cite{KrylovRozovskii1979,PrevotRoeckner,LiuRoeckner2015} for equations with locally monotone coefficients~\cite{LiuRoeckner2010,LiuRoeckner2015}. As we will show in Proposition~\ref{prop:reg_var} below, the variational solution (constructed in Proposition~\ref{prop:ex_var} below) also turns out to be a mild solution. We remark that the authors of \cite{HamsterHupkes2019,HamsterHupkes2020PhysD,HamsterHupkes2020SIMA} use \cite{LiuRoeckner2010} to construct solutions, too, but as previously mentioned, their system is regularized by adding $c \partial_x^2 u$ with $c > 0$ to the second component in \eqref{def_a}, which also changes the Gelfand triple to $H^1(\R;\R^2) \hookrightarrow L^2(\R;\R^2) \hookrightarrow H^{-1}(\R;\R^2)$. Since the variational approach has not been directly worked out for the FitzHugh-Nagumo SPDE with additive noise and no regularization of the second component, we use this opportunity to fill this gap within this work as an auxiliary step.  Therefore, we use the concept of \emph{variational solutions} (cf.~\cite[Definition~1.1]{LiuRoeckner2010} and \cite[Definition~5.1.2]{LiuRoeckner2015}):
\begin{definition}\label{def:variational}
A variational solution to \eqref{stoch_evol} is a continuous $H$-valued $(\cF_t)_{t \ge 0}$-adapted process $(X(t,\cdot))_{t \in [0,T]}$ such that the $\d t \otimes \PP$-equivalence class $\check X$ meets $\check X \in L^2\left([0,T] \times \Omega, \d t \otimes \PP; V\right)$ and such that the solution formula
\begin{equation}\label{var_formula}
X(t,\cdot) = X(0,\cdot) + \int_0^t \cA\left(t',\bar X(t',\cdot)\right) \d t' + \int_0^t \cB\left(t',\bar X(t',\cdot)\right) \d W_U(t',\cdot) \quad \mbox{for} \quad t \in [0,t]
\end{equation}
is satisfied, $\PP$-almost surely, where $\bar X$ denotes any $V$-valued progressively measurable $\d t \otimes \PP$-version of $\check X$.
\end{definition}
Under the given hypothesis, we can prove existence of solutions and regularity in space under additional assumptions:
\begin{proposition}\label{prop:ex_var}
For $p \in [6,\infty)$ and $T > 0$, the stochastic evolution equation \eqref{stoch_evol} has for any $X(0,\cdot) = \left(u(0,\cdot),v(0,\cdot)\right)^\trans = X^{(0)} = \left(u^{(0)},v^{(0)}\right)^\trans \in L^p\left(\Omega,\cF_{0},\PP;H\right)$ a unique variational solution. The solution further satisfies $X \in L^p\left(\Omega,\cF,\PP;C^0\left([0,T];H\right)\right)$.
\end{proposition}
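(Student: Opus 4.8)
The plan is to verify the four structural conditions of the variational framework for locally monotone coefficients of Liu and R\"ockner \cite{LiuRoeckner2010,LiuRoeckner2015} — hemicontinuity, (local) monotonicity, coercivity, and growth — for the Gelfand triple \eqref{rigged_alt} with the weighted inner products \eqref{hv_norm}, and then to quote their existence, uniqueness, and moment estimates. Two features simplify the setup at the outset. First, the noise is additive: by \eqref{def_b} the map $\cB(t,X) = (\sigma\sqrt Q\,\cdot\,,0)^{\trans}$ does not depend on $X$, and since $Q$ has finite trace, $\vertii{\cB(t,X)}_{L_2(U;H)}^2 = \sigma^2 Z\eps\,\tr Q$ is a finite constant while $\vertii{\cB(t,X)-\cB(t,Y)}_{L_2(U;H)} = 0$. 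Second, in one space dimension the embedding $H^1(\R)\hookrightarrow L^\infty(\R)$ makes the cubic-type nonlinearity $u\mapsto f(u+\hat u(\cdot+st))-f(\hat u(\cdot+st))$ bounded from $H^1(\R)$ into $L^2(\R)$, so that $\cA(t,\cdot)\colon V\to V^*$ is well-defined; hemicontinuity then follows from $f\in C^3(\R)$ and dominated convergence.

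The monotonicity and coercivity estimates both rest on a cancellation specific to the weighted product \eqref{hv_norm}. Writing $X=(u_1,v_1)^{\trans}$, $Y=(u_2,v_2)^{\trans}$, $w:=u_1-u_2$, $q:=v_1-v_2$ and pairing $\cA(t,X)-\cA(t,Y)$ against $X-Y$ in $H$, the coupling terms $-qw$ and $+wq$ cancel because the first component of $H$ carries the weight $\eps$ relative to the second, leaving (suppressing the shift $\cdot+st$)
\begin{equation*}
\inner{\cA(t,X)-\cA(t,Y)}{X-Y}{H} = Z\eps\int_\R\Bigl(-\nu(\partial_x w)^2 + \bigl(f(u_1+\hat u)-f(u_2+\hat u)\bigr)w - \gamma q^2\Bigr)\,\d x.
\end{equation*}
The decisive point is that, by the mean value theorem and the \emph{uniform} bound $\sup_{w\in\R}f'(w)=\eta_1<\infty$ from \eqref{cond_f_prime}, the reaction contribution satisfies $(f(u_1+\hat u)-f(u_2+\hat u))w\le\eta_1 w^2$, so the whole pairing is bounded by $\eta_1\vertii{X-Y}_H^2$. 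Hence \emph{global} monotonicity holds with vanishing local-monotonicity remainder, which in particular yields uniqueness directly via Gronwall. Applying the same cancellation and bound to $\cA(t,X)$ paired against $X$ gives coercivity: the dissipation $-\nu(\partial_x u)^2$ reconstitutes $-2\nu(\vertii{X}_V^2-\vertii{X}_H^2)$, so that $2\,\inner{\cA(t,X)}{X}{H}+\vertii{\cB(t,X)}_{L_2(U;H)}^2\le -2\nu\vertii{X}_V^2 + (2\nu+2\eta_1)\vertii{X}_H^2 + \sigma^2 Z\eps\,\tr Q$, i.e.\ the standard condition with exponent $\alpha=2$ and $\theta=2\nu$.

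The delicate step is the growth bound on $\vertii{\cA(t,X)}_{V^*}$, where a crude use of $H^1(\R)\hookrightarrow L^\infty(\R)$ would produce a power $\vertii{X}_V^4$ incompatible with $\alpha=2$. Instead I would interpolate, using the one-dimensional Gagliardo--Nirenberg inequality $\vertii{u}_{L^\infty(\R)}^2\lesssim\vertii{u}_{L^2(\R)}\vertii{u}_{H^1(\R)}$ to control the worst term $\int_\R|u|^6\,\d x\lesssim\vertii{u}_{L^2}^4\vertii{u}_{H^1}^2$ arising from \eqref{cond_f_diff_4}. This distributes the cubic growth so that the $V$-norm appears only to the power two and the excess is carried by the $H$-norm, yielding $\vertii{\cA(t,X)}_{V^*}^2\le C\bigl(1+\vertii{X}_H^4\bigr)\vertii{X}_V^2 + C$, i.e.\ the growth condition with $\alpha=2$ and $\beta=4$. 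With all four conditions in place the abstract theorem provides a unique variational solution for $X^{(0)}\in L^2(\Omega,\cF_0,\PP;H)$, and the accompanying higher-moment estimate — obtained by applying It\^o's formula to $\vertii{X}_H^p$ together with the Burkholder--Davis--Gundy inequality — upgrades this to $X\in L^p\bigl(\Omega,\cF,\PP;C^0([0,T];H)\bigr)$ precisely when $p\ge\beta+2=6$, which explains the hypothesis $p\in[6,\infty)$. I expect the matching of exponents in this final growth/moment step to be the main obstacle, with the remainder reducing to careful but routine bookkeeping of the $Z$- and $\eps$-weights in \eqref{hv_norm}.
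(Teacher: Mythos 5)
Your proposal is correct and follows essentially the same route as the paper: verifying the Liu--R\"ockner conditions with the very same key observations, namely that the $\eps$-weighting in \eqref{hv_norm} cancels the off-diagonal coupling terms so that \eqref{cond_f_prime} yields global (not merely local) monotonicity with $g=0$, that the additive noise makes the $\cB$-conditions trivial, and that the one-dimensional interpolation $\vertii{w}_{L^\infty(\R)}^2\le 2\vertii{w}_{L^2(\R)}\vertii{\partial_x w}_{L^2(\R)}$ gives the growth condition with $\alpha=2$, $\beta=4$, hence $p\ge\beta+2=6$. The paper then simply invokes the abstract existence theorem together with its built-in moment estimate, exactly as you anticipate.
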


For the subsequent result, we define the stronger space
\begin{subequations}\label{strong_v}
\begin{equation}
\cV := H^1(\R) \eoperp H^1(\R)
\end{equation}
with inner product
\begin{eqnarray}\nonumber
\inner{Y_1}{Y_2}{\cV} &:=& Z \int_\R \left(\eps w_1 w_2 + \eps (\partial_x w_1) (\partial_x w_2) + q_1 q_2 + (\partial_x q_1) (\partial_x q_2)\right) \d x \\
&\stackrel{\eqref{h_norm}}{=}& \inner{Y_1}{Y_2}{H} + \inner{\partial_x Y_1}{\partial_x Y_2}{H}, \label{sv_norm}
\end{eqnarray}
\end{subequations}
where $Y_j$ and $Z$ are as in \eqref{yj_z}.
\begin{proposition}\label{prop:reg_var}
In the situation of Proposition~\ref{prop:ex_var}, the solution $X$ is also a mild solution in the sense of \cite{DaPratoZabczyk} with linear operator
\[
\begin{pmatrix} \nu \partial_x^2 & 0 \\ 0 & - \eps \gamma \end{pmatrix} \colon H \supseteq H^2(\R) \eoperp L^2(\R) \to H.
\]
If additionally $\sqrt Q \in L_2(U;H^1(\R))$, $u^{(0)} \in L^2(\R)$, and $v^{(0)} \in H^1(\R)$, then
\[
t u, v \in C^0\left([0,T];H^1(\R)\right), \quad \mbox{$\PP$-almost surely.}
\]
If further $u^{(0)} \in H^1(\R)$, i.e., $X^{(0)} \in \cV$, then $X \in C^0\left([0,T];\cV\right)$, $\PP$-almost surely.
\end{proposition}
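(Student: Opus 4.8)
The plan is to write the drift in \eqref{def_a} as $\cA(t,X)=AX+F(t,X)$ with the \emph{decoupled} linear operator
\[
A=\begin{pmatrix}\nu\partial_x^2&0\\0&-\eps\gamma\end{pmatrix},\qquad F(t,X)=\begin{pmatrix}f(u+\hat u(\cdot+st))-f(\hat u(\cdot+st))-v\\ \eps u\end{pmatrix}.
\]
Observe first that $A$ is self-adjoint and nonpositive in $\left(H,\inner{\cdot}{\cdot}{H}\right)$ from \eqref{h_norm} (the weights $\eps$ and $Z$ are constants and $\partial_x^2$ is self-adjoint on $L^2(\R)$), and since its first block is the Gaussian heat generator and its second block is bounded, the semigroup $S(t)=\mathrm{diag}\!\left(e^{\nu t\partial_x^2},e^{-\eps\gamma t}\right)$ is an \emph{analytic contraction} semigroup on $H$. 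For the first assertion the plan is to show that any variational solution is the mild solution attached to $A$ by applying the It\^o formula to $s\mapsto S(t-s)X(s)$; I would make this rigorous by replacing $S$ with the semigroups generated by the Yosida approximations $A_\lambda=\lambda A(\lambda-A)^{-1}$, which are bounded on $H$, and then passing to the limit $\lambda\to\infty$ in $V^*$ using the integrability of $\cA(\cdot,\bar X)$ and $\cB(\cdot,\bar X)$ granted by Proposition~\ref{prop:ex_var}. This yields
\[
X(t)=S(t)X^{(0)}+\int_0^t S(t-r)F(r,X(r))\,\d r+\sigma\int_0^t S(t-r)\begin{pmatrix}\sqrt Q\\0\end{pmatrix}\d W_U(r).
\]

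For the regularity assertions I work pathwise on this mild identity. The stochastic convolution $O(t):=\sigma\int_0^t S(t-r)(\sqrt Q,0)^\trans\,\d W_U(r)$ has vanishing second component, while its first component is a stochastic convolution of the analytic heat semigroup against the $H^1$-valued integrand $\sqrt Q\in L_2\!\left(U;H^1(\R)\right)$; by the factorization method it lies in $C^0\!\left([0,T];H^1(\R)\right)$, $\PP$-almost surely. The remaining task, and the main obstacle, is to control the deterministic terms against the \emph{cubic} growth of $f$: a crude application of \eqref{cond_f_diff_4} only bounds the first component of $F$ by $\|u\|_{L^2(\R)}\big(1+\|u\|_{H^1(\R)}^2\big)$, which is quadratic in $\|u\|_{H^1}$ and too singular near $t=0$ to close any estimate. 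I would circumvent this using the a priori bound $u\in C^0\!\left([0,T];L^2(\R)\right)$ from Proposition~\ref{prop:ex_var} together with the one-dimensional interpolation inequality $\|u\|_{L^\infty(\R)}^2\le 2\|u\|_{L^2(\R)}\|u\|_{H^1(\R)}$ to absorb two powers of $u$ into $K:=\sup_{t\in[0,T]}\|u(t)\|_{L^2(\R)}$, so that, with $g(r):=f(u(r)+\hat u)-f(\hat u)-v(r)$, one obtains the \emph{effectively linear} bound $\|g(r)\|_{L^2(\R)}\le C_K\big(1+\|u(r)\|_{H^1(\R)}\big)$.

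Combining this with the parabolic smoothing estimate $\big\|e^{\nu\tau\partial_x^2}\big\|_{L^2(\R)\to H^1(\R)}\le C\tau^{-1/2}$ and the mild identity, the function $\phi(t):=\|u(t)\|_{H^1(\R)}$ satisfies the weakly singular Volterra inequality $\phi(t)\le a(t)+C_K\int_0^t(t-r)^{-1/2}\phi(r)\,\d r$ with $a(t)\le C\big(t^{-1/2}\|u^{(0)}\|_{L^2(\R)}+\sup_{[0,T]}\|O\|_{H^1(\R)}\big)$. A singular Gronwall (Henry) lemma then gives $\phi(t)\le Ct^{-1/2}$ in the case $u^{(0)}\in L^2(\R)$, whence $t\,u\in C^0\!\left([0,T];H^1(\R)\right)$: the prefactor $t$ cancels the $t^{-1/2}$ blow-up of the initial-data term, the convolution and stochastic terms extend continuously up to $t=0$, and continuity for $t>0$ follows from the mild representation by dominated convergence. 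For $v$ I would use that, pointwise in $x$, the second line is the linear ODE solved by $v(t)=e^{-\eps\gamma t}v^{(0)}+\eps\int_0^t e^{-\eps\gamma(t-r)}u(r)\,\d r$; since $v^{(0)}\in H^1(\R)$ and the bound $\|u(r)\|_{H^1(\R)}\lesssim r^{-1/2}$ is integrable in $r$, this integral converges in $H^1(\R)$ and defines a continuous $H^1$-valued path, giving $v\in C^0\!\left([0,T];H^1(\R)\right)$.

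Finally, for the last assertion I observe that when $u^{(0)}\in H^1(\R)$ the function $a(t)$ above is bounded, so the same Gronwall argument yields $\sup_{[0,T]}\|u(t)\|_{H^1(\R)}<\infty$ and $u\in C^0\!\left([0,T];H^1(\R)\right)$ without the prefactor $t$; together with $v\in C^0\!\left([0,T];H^1(\R)\right)$ this gives $X\in C^0\!\left([0,T];\cV\right)$ by the definition \eqref{strong_v} of $\cV$. The step I expect to be most delicate is the control of the cubic nonlinearity near $t=0$; the reduction to a linear singular Volterra inequality via the interpolation inequality and the a priori $L^2$-bound is what makes the weakly singular Gronwall argument applicable, while the justification of the It\^o formula for $S(t-s)X(s)$ in the first part requires the careful Yosida-approximation passage to the limit described above.
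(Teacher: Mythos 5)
Your proposal is correct, but for the regularity part it takes a genuinely different route from the paper. The paper also first passes from the variational to the mild formulation (by citing \cite[Proposition~G.0.5~(i)]{LiuRoeckner2015} rather than running the It\^o--Yosida argument by hand, which requires verifying exactly the path-wise $H$-integrability of the nonlinear drift, i.e.\ the analogue of \eqref{equivalence_var_mild}). For the $H^1$-regularity, however, the paper differentiates the Duhamel formula in space, writes an identity for $t\,\partial_x u$, estimates the convolution terms with Young's inequality using only $\vertii{K_{t-t'}}_{L^1(\R)}=1$ (no parabolic smoothing), and closes the resulting cubic term $\int_0^t \vertii{\partial_x u(t')}_{L^2}^2\, t' \vertii{\partial_x u(t')}_{L^2}\,\d t'$ with the classical Gr\"onwall lemma, the integrating factor $\exp\bigl(2\eta_3\int_0^t\vertii{\partial_x u}_{L^2}^2\,\d t'\bigr)$ being finite $\PP$-a.s.\ precisely because the variational solution satisfies $u\in L^2([0,T];H^1(\R))$. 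You instead stay at the level of $u$ itself, trade the spatial derivative for the smoothing estimate $\vertii{e^{\nu\tau\partial_x^2}}_{L^2\to H^1}\lesssim \tau^{-1/2}$, linearize the cubic nonlinearity via the 1D interpolation inequality and the a priori $C^0([0,T];L^2(\R))$ bound, and invoke a singular (Henry-type) Gr\"onwall lemma. Both routes are sound; yours avoids differentiating the nonlinearity (so it does not need the conditions \eqref{cond_f_diff_3}, \eqref{cond_f_diff_5}--\eqref{cond_f_diff_7} on $f'$ and $f''$ at this stage), at the price of the extra machinery of parabolic smoothing and the singular Gr\"onwall inequality. Two small points you should make explicit: (i) to start the singular Gr\"onwall iteration you need $\phi=\vertii{u(\cdot)}_{H^1}$ to be a.e.\ finite and locally integrable, which is again supplied by $u\in L^2([0,T];H^1(\R))$ $\PP$-a.s.\ from Proposition~\ref{prop:ex_var}; (ii) the $H$-valued Bochner integrability of $F(\cdot,X(\cdot))$ needed to write your mild identity is not directly "granted" by Proposition~\ref{prop:ex_var} (which only controls $\cA$ in $V^*$) but follows from your own effectively linear bound combined with the a priori estimates, as in \eqref{equivalence_var_mild}.
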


We will give the proof of Proposition~\ref{prop:ex_var} and Proposition~\ref{prop:reg_var} in \S\ref{sec:var_construct}.

\subsection{Linearization around the traveling wave\label{sec:lin_intro}}
We recall the global Assumptions~\ref{ass:tw} and write $\hat X := \left(\hat u, \hat v\right)^\trans$ for the deterministically stable fast traveling pulse. Then we define
\begin{equation}\label{def_txxhx}
\tilde X(t,x) := \left(\tilde u(t,x),\tilde v(t,x)\right)^\trans = X(t,x) + \hat X(x + s t),
\end{equation}
and in line with \eqref{decomp_sfhn} set
\begin{eqnarray}\nonumber
X_\varphi(t,x) &:=& \left(u_\varphi(t,x),v_\varphi(t,x)\right)^\trans := \tilde X(t,x) - \hat X(x + s t + \varphi) \\
&=& X(t,x) + \hat X(x + s t) - \hat X(x + s t + \varphi), \label{x_phi}
\end{eqnarray}
where $\varphi = \varphi(t)$ is yet to be determined. We split the stochastic evolution equation \eqref{stoch_evol} around the traveling-wave profile $\hat X$ into a linear and nonlinear part using \eqref{fhn_tw}. This yields
\begin{equation}\label{evol_lin_tw}
 \d X_\varphi(t,\cdot) = \left(\cL_{s t + \varphi(t)} X_\varphi(t,\cdot) + R_{\varphi(t)}(t,X_\varphi(t,\cdot),\cdot) - \dot\varphi(t) \tfrac{\d \hat X}{\d \xi}(\cdot+st+\varphi(t))\right) \d t + \begin{pmatrix} \sigma \\ 0 \end{pmatrix} \d W(t,\cdot)
\end{equation}
where
\begin{equation}\label{lin_op}
 \cL_{st+\varphi} Y := \begin{pmatrix} \nu \partial_x^2 w + f'\left(\hat u(\cdot+s t+\varphi)\right) w - q \\ \eps \left(w - \gamma q\right) \end{pmatrix}, \quad Y := \begin{pmatrix} w \\ q \end{pmatrix},
\end{equation}
and
\begin{equation}\label{remainder}
 R_\varphi(t,Y,\cdot) := \begin{pmatrix} f\left(w + \hat u(\cdot+s t + \varphi)\right) - f\left(\hat u(\cdot+s t + \varphi)\right) - f'\left(\hat u(\cdot + s t + \varphi)\right) w \\ 0 \end{pmatrix}.
\end{equation}
We have the following result, which is proved in \S\ref{sec:family}.
\begin{proposition}[linearized evolution]\label{prop:family}
The family of operators $\left(\cL_{st}\right)_{t \ge 0}$, with 
\[
\cL_{st} \colon D\left(\cL_{st}\right) = H^3(\R) \eoperp H^1(\R) \to \cV \stackrel{\eqref{strong_v}}{=} H^1(\R) \eoperp H^1(\R),
\]
generates an evolution family $\left(P_{st,st'}\right)_{t \ge t' \ge 0}$ in $\cV$ with
\begin{subequations}\label{bound_evol_fam}
\begin{equation}\label{bound_pstst}
\vertii{P_{st,st'}}_{L(\cV)} \le e^{\beta (t-t')},
\end{equation}
where
\begin{equation}\label{def_beta}
\beta := \vertii{f'(\hat u) - f'(0)}_{W^{1,\infty}(\R)} - \min\left\{- f'(0),\eps \gamma\right\},
\end{equation}
\end{subequations}
and we have used the norm in the $\xi=x+st$ coordinate in the last expression.
\end{proposition}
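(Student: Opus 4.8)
The plan is to write $\cL_{st} = \cA_0 + \cM_{st}$, where $\cA_0$ is the constant-coefficient operator acting as $(w,q)^\trans \mapsto \left(\nu\partial_x^2 w + f'(0)\,w - q,\ \eps w - \eps\gamma q\right)^\trans$ (that is, \eqref{lin_op} with $\hat u$ replaced by its limit $0$) and $\cM_{st}$ is multiplication by $f'(\hat u(\cdot+st))-f'(0)$ on the first component only. First I would verify that $\cA_0$ generates a $C_0$-semigroup on $\cV$: its diagonal part is the heat-type semigroup generated by $\nu\partial_x^2 + f'(0)$ on the first $H^1(\R)$-factor (domain $H^3(\R)$) together with the scalar semigroup $e^{-\eps\gamma t}$ on the second factor, and the off-diagonal coupling $(w,q)^\trans\mapsto(-q,\eps w)^\trans$ is bounded on $\cV$, so $\cA_0$ is a bounded perturbation of a generator. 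Since $f\in C^3$ and $\hat u$ together with its derivatives are bounded and exponentially decaying (Global Assumptions~\ref{ass:reaction} and the subsequent estimates), the multiplier $f'(\hat u(\cdot+st))-f'(0)$ lies in $W^{1,\infty}(\R)$ with a bound independent of $t$, and $t\mapsto\cM_{st}$ is norm-continuous into $L(\cV)$. Thus $\cM_{st}$ is a bounded, continuously varying perturbation of $\cA_0$, and the evolution family $\left(P_{st,st'}\right)_{t\ge t'\ge0}$ exists by the standard Duhamel/Picard construction for bounded perturbations.

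The exponential bound \eqref{bound_pstst} rests on a dissipativity estimate in the weighted inner product \eqref{sv_norm}. For $Y=(w,q)^\trans \in D(\cL_{st}) = H^3(\R)\eoperp H^1(\R)$ I would use $\inner{Y_1}{Y_2}{\cV}=\inner{Y_1}{Y_2}{H}+\inner{\partial_x Y_1}{\partial_x Y_2}{H}$ to split
\[
\inner{\cL_{st}Y}{Y}{\cV}=\inner{\cL_{st}Y}{Y}{H}+\inner{\partial_x(\cL_{st}Y)}{\partial_x Y}{H}.
\]
The decisive structural point is that the $\eps$-weight in \eqref{h_norm} renders the coupling $(w,q)^\trans\mapsto(-q,\eps w)^\trans$ skew-adjoint in $\inner{\cdot}{\cdot}{H}$, so in both summands the cross terms $\mp\eps\int_\R qw\,\d x$ and $\mp\eps\int_\R(\partial_x q)(\partial_x w)\,\d x$ cancel exactly. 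Integrating the diffusion by parts produces the nonpositive contributions $-\eps\nu\int_\R(\partial_x w)^2\,\d x$ and $-\eps\nu\int_\R(\partial_x^2 w)^2\,\d x$, which I discard, leaving
\[
\inner{\cL_{st}Y}{Y}{\cV}\le Z\int_\R\left(\eps f'(\hat u)\left(w^2+(\partial_x w)^2\right)+\eps f''(\hat u)\tfrac{\d\hat u}{\d\xi}\,w\,\partial_x w-\eps\gamma\left(q^2+(\partial_x q)^2\right)\right)\d x,
\]
where $\hat u=\hat u(\cdot+st)$.

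It then remains to extract the rate $\beta$. I would bound $f'(\hat u)\le f'(0)+\vertii{f'(\hat u)-f'(0)}_{L^\infty(\R)}$, estimate the mixed term by Young's inequality against $\tfrac12\vertii{\tfrac{\d}{\d\xi}(f'(\hat u)-f'(0))}_{L^\infty(\R)}\left(w^2+(\partial_x w)^2\right)$, and keep the damping $-\eps\gamma(q^2+(\partial_x q)^2)$. Comparing the resulting coefficients of $\eps w^2,\eps(\partial_x w)^2$ and of $q^2,(\partial_x q)^2$ with those of $\vertii{Y}_\cV^2$, the $w$-block contributes at rate at most $f'(0)+\vertii{f'(\hat u)-f'(0)}_{W^{1,\infty}(\R)}$ and the $q$-block at rate $-\eps\gamma$; using $f'(0)<0$ and $f'(0)+\min\{-f'(0),\eps\gamma\}\le0$, both are dominated by $\beta$ from \eqref{def_beta} (translation invariance makes the $W^{1,\infty}$-norm independent of $t$), so $\inner{\cL_{st}Y}{Y}{\cV}\le\beta\,\vertii{Y}_\cV^2$. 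For $Y_0\in D(\cL_{st})$ the orbit $t\mapsto P_{st,st'}Y_0$ is then a classical solution, so $\tfrac{\d}{\d t}\vertii{P_{st,st'}Y_0}_\cV^2=2\inner{\cL_{st}P_{st,st'}Y_0}{P_{st,st'}Y_0}{\cV}\le2\beta\vertii{P_{st,st'}Y_0}_\cV^2$, and Grönwall together with density of $D(\cL_{st})$ in $\cV$ yields \eqref{bound_pstst}.

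The main obstacle is not any single computation but the two structural defects of $\cL_{st}$ that must be circumvented at once. Because $\cL_{st}$ is not self-adjoint, a naive energy estimate in the unweighted $H^1(\R;\R^2)$ inner product would retain the antisymmetric coupling between $u$ and $v$ and fail to be dissipative; this is exactly what the $\eps$-scaling of \eqref{hv_norm} repairs, and recognizing that the weight makes the coupling skew-adjoint is the crux. Because the second component carries no diffusion, parabolic smoothing in the $v$-variable is unavailable and the semigroup is only strongly continuous, so I close the $H^1$-energy for $q$ using solely the pointwise damping $-\eps\gamma$; this is why the domain is $H^3(\R)\eoperp H^1(\R)$ with no extra regularity imposed on $q$, and why generation is routed through bounded-perturbation theory rather than analytic-semigroup arguments. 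Finally, one must check that $f\in C^3$ and the exponential decay of $\tfrac{\d^j\hat u}{\d\xi^j}$ are strong enough to keep $f'(\hat u(\cdot+st))-f'(0)$ in $W^{1,\infty}(\R)$, which underlies both the boundedness of $\cM_{st}$ and the control of the mixed term.
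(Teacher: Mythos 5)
Your decomposition is exactly the paper's: it writes $\cL_{st}=\cL_{\pm\infty}+\cR_{st}$ with $\cL_{\pm\infty}$ the constant-coefficient operator built from $f'(0)$ and $\cR_{st}$ the multiplication by $f'(\hat u(\cdot+st))-f'(0)$ in the first component, and the decisive computation --- that the $\sqrt\eps$-weighting of \eqref{hv_norm}, \eqref{sv_norm} makes the coupling $(w,q)^\trans\mapsto(-q,\eps w)^\trans$ skew-adjoint so that the cross terms cancel and only $f'(0)$ (resp.\ $f'(\hat u)$) and $-\eps\gamma$ survive as rates --- is the same one the paper performs for $\cL_{\pm\infty}$. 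Where you diverge is in the assembly. For generation you go through the product of the heat semigroup and the scalar semigroup plus the bounded coupling, whereas the paper runs Lax--Milgram plus Lumer--Phillips on $\cL_{\pm\infty}+\kappa\,\id_\cV$; both are fine, and your observation that the coupling has $\cV$-operator norm $\sqrt\eps$ is correct. For the bound \eqref{bound_pstst} the paper never touches the full time-dependent operator: it extracts the contraction estimate $\vertii{e^{t\cL_{\pm\infty}}}_{L(\cV)}\le e^{-\kappa t}$ from the dissipativity of the frozen part and then invokes the perturbation theorem \cite[Chapter~5, Theorem~2.3]{Pazy1992}, whose conclusion already contains $\vertii{P_{st,st'}}_{L(\cV)}\le e^{(-\kappa+\sup_t\vertii{\cR_{st}}_{L(\cV)})(t-t')}$, i.e.\ \eqref{def_beta}. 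You instead prove quasi-dissipativity of the full $\cL_{st}$ in $\cV$ and apply Gr\"onwall to $t\mapsto\vertii{P_{st,st'}Y_0}_\cV^2$; your coefficient bookkeeping ($f'(0)\le-\min\{-f'(0),\eps\gamma\}$ for the $w$-block, $-\eps\gamma$ for the $q$-block, Young on the mixed term) does recover the same $\beta$.

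The one step you should not wave through is ``the orbit is then a classical solution.'' Differentiating $\vertii{P_{st,st'}Y_0}_\cV^2$ requires $P_{st,st'}Y_0\in D(\cL_{st})=H^3(\R)\eoperp H^1(\R)$ for all $t$ together with differentiability of the orbit in $\cV$, and this is not automatic from the Duhamel/Picard construction: the perturbation $\cM_{st}$ does not map $H^3(\R)\eoperp H^1(\R)$ into itself under the standing assumption $f\in C^3(\R)$ (the multiplier $f'(\hat u)-f'(0)$ is only $W^{2,\infty}$), so invariance of the domain under the perturbed flow needs an argument or an approximation (Yosida regularization, or mollifying $f$). The cleaner route --- and the one the paper implicitly takes via \cite{Pazy1992} --- is to run Gr\"onwall on the mild identity $P_{st,st'}Y=e^{(t-t')\cL_{\pm\infty}}Y+\int_{t'}^{t}e^{(t-r)\cL_{\pm\infty}}\cR_{sr}P_{sr,st'}Y\,\d r$ using only the contraction bound for $e^{t\cL_{\pm\infty}}$ and $\sup_t\vertii{\cR_{st}}_{L(\cV)}\le\vertii{f'(\hat u)-f'(0)}_{W^{1,\infty}(\R)}$; this needs no differentiability and yields \eqref{bound_pstst} with the same $\beta$. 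So your proof is sound in its essentials, but that final Gr\"onwall step should either be rerouted through the mild formulation or supplemented by a domain-invariance argument.
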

We also write
\begin{equation}\label{fw_op}
 \cL^\# Y := \begin{pmatrix} \nu \partial_\xi^2 w + f'\left(\hat u\right) w - q - s \partial_\xi w \\ \eps \left(w - \gamma q\right) - s \partial_\xi q \end{pmatrix} = \begin{pmatrix} \nu \partial_\xi^2 + f'\left(\hat u\right) - s \partial_\xi & - 1 \\ \eps & - \eps \gamma - s \partial_\xi \end{pmatrix} Y
\end{equation}
for the linearized evolution in the moving frame, i.e., with respect to $\xi=x+st$, and call $\cL^\#$ the \emph{frozen-wave operator}. Defining the \emph{translation operator} $\cT_c$ by
\begin{equation}\label{translation}
\cT_c Y := Y(\cdot+c) \quad \mbox{for any} \quad c \in \R,
\end{equation}
one may readily check that
\begin{equation}\label{lst_lf}
\partial_t - \cL_{st} = \cT_{st} \left(\partial_t - \cL^\#\right) \cT_{-st}.
\end{equation}
Furthermore, by differentiating \eqref{fhn_tw} it follows
\begin{equation}\label{frozen_tw_0}
\cL^\# \tfrac{\d \hat X}{\d \xi} = 0, \quad \mbox{where} \quad \hat X := \begin{pmatrix} \hat u \\ \hat v \end{pmatrix}.
\end{equation}
Equation~\eqref{frozen_tw_0} simply means that the derivative of the traveling wave is an eigenvector of the frozen-wave operator, which actually arises due to translation invariance~\cite{KuehnBook1}. Here and in what follows, we use the conventions of \cite[\S2.2.4, \S2.2.5, Definition~2.2.3, (4.1.11)]{KapitulaPromislow2013}:
\begin{definition}\label{def:spectrum}
For a complex Banach space $E$ and a linear operator $\cB \colon E \supseteq D(\cB) \to E$ we define
\begin{enumerate}[(a)]
\item\label{item:resolvent_set} the \emph{resolvent set} $\rho(\cB)$ as the set of all $\lambda \in \C$ such that $\lambda \id_E - \cB$ is invertible and $(\lambda \id_E-\cB)^{-1}$ is bounded; 
\item the \emph{spectrum} $\sigma(\cB) := \C \setminus \rho(\cB)$;
\item\label{item:def_point} the \emph{point spectrum} $\sigma_\mathrm{p}(\cB)$ as the set of all $\lambda \in \C$ such that the \emph{index} $\ind(\lambda \id_E - \cB)$ of $\lambda \id_E - \cB$ satisfies $\ind(\lambda \id_E - \cB) = 0$ but $\lambda \id_E - \cB$ is not invertible;
\item\label{item:essential} the \emph{essential spectrum} $\sigma_\mathrm{ess}(\cB)$ as the set of all $\lambda \in \C$ such that $\lambda \id_E - \cB$ is not a Fredholm operator with $\ind(\lambda \id_E-\cB) = 0$.
\item\label{item:numeric} for a $\C$-Hilbert space $E$ with inner product $\inner{\cdot}{\cdot}{E}$ the \emph{numerical range}
\[
\cR_E(\cB) := \left\{\inner{\cB Y}{Y}{E} \colon Y \in D(\cB), \, \vertii{Y}_E = 1\right\}.
\]
\end{enumerate}
\end{definition}

Next, we are going to list (spectral) properties of the frozen-wave operator in the statements below, which make use of a complex Hilbert space
\begin{subequations}
\begin{equation}\label{hilbert_c}
H_\C := L^2(\R;\C) \eoperp L^2(\R;\C),
\end{equation}
endowed with the inner product
\begin{equation}\label{inner_c}
\inner{Y_1}{Y_2}{H_\C} := Z \int_\R \left(\eps \overline{w_1} w_2 + \overline{q_1} q_2\right) \d\xi, \quad Y_j := \begin{pmatrix} w_j \\ q_j \end{pmatrix}, \quad Z \stackrel{\eqref{yj_z}}{=} \left(\int_\R \left(\eps \left(\tfrac{\d\hat u}{\d\xi}\right)^2 + \left(\tfrac{\d\hat v}{\d\xi}\right)^2\right)\d\xi\right)^{-1}.
\end{equation}
\end{subequations}
As mentioned in \S\ref{sec:tw} already, the spectral properties of $\cL^\#$ have in fact been studied by Jones~\cite{Jones1984} (based on the framework developed in \cite{Evans1971_72,Evans1972_73,Evans1972_73_2,Evans1974_75,Evans1976,Evans1976_2}) with shorter proofs given by Yanagida in~\cite{Yanagida1985} and by Arioli and Koch in \cite[\S3]{ArioliKoch2015} in the space of bounded uniformly continuous functions. Furthermore, in \cite[\S4.1]{ArioliKoch2015}, bounds on the eigenvalues for square integrable functions on the torus have been derived by a scaling of the components analogous to the one used in \eqref{hv_norm} or \eqref{inner_c}. Here, we adapt these approaches in order to give corresponding results in $H_\C$ in \S\ref{sec:frozen}, a choice being compatible with the Hilbert-space setting of \S\ref{sec:evol}. Directly applying the deterministic results \cite{Jones1984,Yanagida1985,ArioliKoch2015} would require to adapt our stochastic framework to Banach spaces, which we briefly comment on in \S\ref{sec:conclusions}.

\medskip

We emphasize that the following stability result, Proposition~\ref{prop:frozen}, in the space of square-integrable functions is (up to a scaling of the components of $Y$) in more generality proved by Ghazaryan, Latushkin, and Schecter in \cite{GhazaryanLatushkinSchecter} and by Yurov in \cite{Yurov}. These proofs mainly rely on applying the Gearhart-Pr\"uss theorem (see e.g.~\cite[Theorem~4.1.5]{KapitulaPromislow2013}) and Palmer's theorem \cite{BenArtziGohberg} and are more complicated than our relatively elementary arguments provided in \S\ref{sec:frozen}. See also \cite{RottmannMatthes2010} for an alternative proof using the Laplace transform by Rottmann-Matthes and \cite{RottmannMatthes2011} by the same author, where the method has been applied to first-order hyperbolic PDEs.

\begin{proposition}[properties of the frozen-wave operator $\cL^\#$]\label{prop:frozen}
For the frozen-wave operator
\[
\cL^\# \colon D(\cL^\#) := H^2(\R;\C) \eoperp H^1(\R;\C) \to H_\C \stackrel{\eqref{hilbert_c}}{=} L^2(\R;\C) \eoperp L^2(\R;\C)
\]
it holds
\begin{enumerate}[(a)]
\item\label{item:frozen_semi} $\cL^\#$ [$\cL^\#|_{D(\cL^\#) \cap H}$] generates a $C_0$-semigroup $\left(P_{st}^\#\right)_{t \ge 0}$ [$\left(P_{st}^\#|_H\right)_{t \ge 0}$] in $H_\C$ [$H$].
\item\label{item:semi_2} We have $P_{st,st'} = \cT_{st} P_{s (t-t')}^\#|_{\cV} \cT_{-st'}$ with $P_{st,st'}$ as in Proposition~\ref{prop:family}.
\item\label{item:spectrum} The \emph{spectrum} $\sigma(\cL^\#) = \sigma_\mathrm{ess}(\cL^\#) \dot\cup \sigma_\mathrm{p}(\cL^\#)$ consists of
\begin{enumerate}[(i)]
\item\label{item:essential-frozen} \emph{essential spectrum} $\sigma_\mathrm{ess}(\cL^\#) \subseteq \left\{ \lambda \in \C: \Re \lambda \le - \kappa\right\}$ with
\begin{equation}\label{def_kappa}
\kappa := \min \left\{-f'(0), \eps \gamma \right\}
\end{equation}
and
\item\label{item:point} \emph{point spectrum} $\sigma_\mathrm{p}(\cL^\#)$ consisting of an isolated eigenvalue $0$ of multiplicity $1$ with eigenvector $\frac{\d \hat X}{\d \xi}$ such that $\sigma_\mathrm{p}(\cL^\#) \setminus \{0\} \subseteq \left\{\lambda \in \C \colon \Re \lambda \le \lambda^*(\eps)\right\}$ for an eigenvalue $\lambda^*(\eps) < 0$, which we will call the \emph{Jones eigenvalue}~\cite{Jones1984} (see \cite{Yanagida1985} for more general reaction terms $f$ as treated here).
\end{enumerate}
\item\label{item:riesz} Defining the \emph{Riesz spectral projections} using \emph{Dunford calculus} as
\begin{equation}\label{riesz_projection}
\Pi^{\#,0} := \frac{1}{2 \pi i} \ointctrclockwise_{\verti{\lambda} = r} \left(\lambda \id_{H_\C} - \cL^\#\right)^{-1} \d\lambda \quad \mbox{and} \quad \Pi^\# := \id_{H_\C} - \Pi^{\#,0},
\end{equation}
where $r := \frac 1 2 \min\left\{\verti{\lambda^*(\eps)}, \kappa\right\}$, it follows that for any $\vartheta < \min\{\kappa,-\lambda^*(\eps)\}$ there exists $C_\vartheta \in [0,\infty)$ independent of $t \in [0,\infty)$ such that 
\begin{equation}\label{est_semigroupFrozenWave}
\vertii{P_{st}^\# \Pi^\#}_{L\left(H_\C\right)} \le C_{\vartheta} e^{-\vartheta t},
\end{equation}
where $\vertii{\cdot}_{L\left(H_\C\right)}$ denotes the \emph{operator norm} of bounded linear operators
\[
L^2(\R;\C) \eoperp L^2(\R;\C) \to L^2(\R;\C) \eoperp L^2(\R;\C).
\]
\item\label{item:riesz2} The operators $\left.\Pi^{\#,0}\right|_H, \left.\Pi^\#\right|_H \colon H \to H$ are well-defined and bounded, too, with $\vertii{\Pi^{\#,0}}_{L\left(H\right)} \le \vertii{\Pi^{\#,0}}_{L\left(H_\C\right)}$, $\vertii{\Pi^\#}_{L\left(H\right)} \le \vertii{\Pi^\#}_{L\left(H_\C\right)}$, and \eqref{est_semigroupFrozenWave} holds true with $H_\C$ replaced by $H$.
\end{enumerate}
\end{proposition}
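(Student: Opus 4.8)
The plan is to build the spectral picture of the frozen-wave operator $\cL^\#$ from its constant-coefficient limit as $\xi \to \pm\infty$, and then to convert the resulting spectral gap into the semigroup estimate \eqref{est_semigroupFrozenWave} by a resolvent argument on the Hilbert space $H_\C$. For \eqref{item:frozen_semi} I would write $\cL^\# = \cL_0 + \cC_0$, where $\cL_0 Y := (\nu\partial_\xi^2 w - s\partial_\xi w, -s\partial_\xi q)^\trans$ is the decoupled advection-diffusion part and $\cC_0 Y := (f'(\hat u) w - q, \eps w - \eps\gamma q)^\trans$ collects the zeroth-order couplings. Since $f'(\hat u) \in L^\infty(\R)$ by \eqref{cond_f_prime}, the operator $\cC_0$ is bounded on $H_\C$ and on $H$, while $\cL_0$, whose Fourier symbol is the diagonal matrix with entries $-\nu k^2 - i s k$ and $- i s k$ (both of non-positive real part), generates a contraction $C_0$-semigroup on either space; the bounded perturbation theorem for $C_0$-semigroups then yields \eqref{item:frozen_semi}. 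For \eqref{item:semi_2} I would set $Q_{st,st'} := \cT_{st} P^\#_{s(t-t')} \cT_{-st'}$, check $Q_{st',st'} = \id_{\cV}$, and differentiate in $t$; the contributions from the $t$-dependence of $\cT_{st}$ and of $P^\#_{s(t-t')}$ combine, via the conjugacy \eqref{lst_lf}, into $\cL_{st} Q_{st,st'}$, so that $Q$ solves the same linear problem as the evolution family of Proposition~\ref{prop:family} and equals it by uniqueness.

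\emph{Essential spectrum.} Because $f'(\hat u) - f'(0)$ decays exponentially as $|\xi| \to \infty$, multiplication by it is relatively compact with respect to the constant-coefficient operator $\cL^\#_\infty$ obtained from $\cL^\#$ by replacing $f'(\hat u)$ with $f'(0)$. Weyl's theorem gives $\sigma_{\mathrm{ess}}(\cL^\#) = \sigma_{\mathrm{ess}}(\cL^\#_\infty) = \sigma(\cL^\#_\infty)$, and the Fourier transform identifies the latter with the union over $k \in \R$ of the eigenvalues $\lambda_\pm(k)$ of
\[
M(k) := \begin{pmatrix} -\nu k^2 + f'(0) - i s k & -1 \\ \eps & -\eps\gamma - i s k \end{pmatrix}.
\]
The key simplification is that $M_{11}(k) - M_{22}(k) = -\nu k^2 + f'(0) + \eps\gamma$ is \emph{real}, so the discriminant $(M_{11}-M_{22})^2 - 4\eps$ is real; a short case distinction on its sign bounds $\Re\lambda_\pm(k) = \tfrac12\Re(M_{11}+M_{22}) \pm \tfrac12\Re\sqrt{(M_{11}-M_{22})^2 - 4\eps}$ by $\max\{f'(0),-\eps\gamma\} = -\kappa$, uniformly in $k$, which is \eqref{item:essential-frozen} with $\kappa$ as in \eqref{def_kappa}.

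\emph{Point spectrum and projections.} That $0$ is an eigenvalue with eigenvector $\tfrac{\d\hat X}{\d\xi}$ is \eqref{frozen_tw_0}. On the half-plane $\{\Re\lambda > -\kappa\}$ to the right of the essential spectrum the asymptotic system is hyperbolic, so any eigenfunction decays exponentially and the $H_\C$-eigenvalues there coincide with the zeros of the Evans function; I would then import the classical computations of Jones~\cite{Jones1984} and Yanagida~\cite{Yanagida1985}, adapted to the scaled inner product \eqref{inner_c}, showing that $0$ is a simple zero and that the next eigenvalue to its left is the Jones eigenvalue $\lambda^*(\eps) < 0$. This yields \eqref{item:point}, and since the circle $\{|\lambda| = r\}$ with $r = \tfrac12\min\{|\lambda^*(\eps)|,\kappa\}$ then encloses only the eigenvalue $0$, the Dunford integral \eqref{riesz_projection} defines a bounded rank-one projection $\Pi^{\#,0}$ with complementary $\cL^\#$-invariant range on which the spectrum lies in $\{\Re\lambda \le -\min\{\kappa,-\lambda^*(\eps)\}\}$.

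\emph{Decay and passage to $H$.} For \eqref{est_semigroupFrozenWave} I would apply the Gearhart-Pr\"uss theorem \cite[Theorem~4.1.5]{KapitulaPromislow2013} on $H_\C$, which reduces the estimate to a uniform bound on the resolvent of $\cL^\#|_{\mathrm{ran}\,\Pi^\#}$ over each half-plane $\{\Re\lambda \ge -\vartheta\}$ with $\vartheta < \min\{\kappa,-\lambda^*(\eps)\}$. \textbf{Establishing this uniform resolvent bound is the step I expect to be the main obstacle}, precisely because the essential spectrum runs parallel to the imaginary axis and the semigroup is merely strongly continuous, so that sectoriality is unavailable; I would split the half-plane, using the numerical-range estimate $\Re\inner{\cL^\# Y}{Y}{H_\C} \le \omega \vertii{Y}_{H_\C}^2$ for some $\omega \in \R$ (in which the advection terms integrate to zero and the two coupling terms cancel) for large $\Re\lambda$, the transport structure $\lambda + \eps\gamma + s\partial_\xi$ in the second component together with the diffusive gain in the first for large $|\Im\lambda|$, and continuity of the resolvent on the remaining bounded rectangle. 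Finally, for \eqref{item:riesz2}, since all coefficients of $\cL^\#$ are real the resolvent obeys $\overline{(\overline{\lambda}\,\id_{H_\C} - \cL^\#)^{-1}\overline{Y}} = (\lambda\,\id_{H_\C} - \cL^\#)^{-1} Y$; as the contour in \eqref{riesz_projection} is symmetric about the real axis and encircles the real eigenvalue $0$, the projections commute with complex conjugation and restrict to bounded operators on the real subspace $H \subset H_\C$, and because the $H$-norm is the restriction of the $H_\C$-norm the asserted norm inequalities and the $H$-version of \eqref{est_semigroupFrozenWave} follow at once.
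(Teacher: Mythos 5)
Your treatment of parts \eqref{item:frozen_semi}, \eqref{item:semi_2}, \eqref{item:spectrum}, and \eqref{item:riesz2} is sound and close in spirit to the paper's: the paper also writes $\cL^\#$ as a constant-coefficient operator plus a perturbation (though it keeps the couplings $-q$, $\eps w$, $-\eps\gamma q$ inside the limit operator $\cL^\#_{\pm\infty}$ and puts only the multiplication by $f'(\hat u)-f'(0)$ into the perturbation $\cR^\#$), obtains the bound $\Re\lambda\le-\kappa$ on $\sigma_\mathrm{ess}(\cL^\#_{\pm\infty})$ from the numerical range via the dissipativity estimate of Lemma~\ref{lem:dissipative} rather than from your explicit Fourier-symbol computation (your case distinction on the real discriminant is correct and gives the same constant), applies Weyl's theorem after proving relative compactness of $\cR^\#$, bootstraps eigenfunction regularity to import the Jones--Yanagida point-spectrum results, and uses the reality of the coefficients for part \eqref{item:riesz2} exactly as you do.

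The genuine divergence is in part \eqref{item:riesz}, and there your proposal has an open gap precisely where you flag one. You propose Gearhart--Pr\"uss, which reduces \eqref{est_semigroupFrozenWave} to a \emph{uniform} resolvent bound on $\left\{\Re\lambda\ge-\vartheta\right\}$ for $\cL^\#|_{\mathrm{ran}\,\Pi^\#}$; your sketch (numerical range for large $\Re\lambda$, transport/diffusion structure for large $\verti{\Im\lambda}$, compactness on a bounded rectangle) is plausible but is exactly the technical core of the proofs in \cite{GhazaryanLatushkinSchecter,Yurov}, which the paper cites and explicitly sets out to avoid as ``more complicated than our relatively elementary arguments.'' The paper instead argues on the time-$t$ maps: using the parabolic smoothing of the first component of the limit semigroup (Lemma~\ref{lem:reg_frozen}) together with the compactness of the multiplication operator $\cR^\#$ on $H^1\eoperp L^2$ (Lemma~\ref{lem:compact}), it shows that $\cR^\# P^{\pm\infty}_{st}\Pi^\#$ and hence $P^\#_{st}\Pi^\#-P^{\pm\infty}_{st}\Pi^\#$ is compact for $t>0$; since $\vertii{P^{\pm\infty}_{st}}_{L(H_\C)}\le e^{-\kappa t}$, the spectrum of $P^\#_{st}\Pi^\#$ outside the disc of radius $e^{-\kappa t}\vertii{\Pi^\#}_{L(H_\C)}$ is pure point spectrum controlled by $e^{\lambda t}$ with $\lambda\in\sigma_\mathrm{p}(\cL^\#)\setminus\{0\}$, and the spectral radius formula $\lim_n\vertii{(P^\#_{st}\Pi^\#)^n}^{1/n}_{L(H_\C)}$ then delivers \eqref{est_semigroupFrozenWave} without any resolvent estimate near the essential spectrum. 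To turn your proposal into a complete proof you would either have to carry out the uniform resolvent bound in full (essentially reproducing \cite{GhazaryanLatushkinSchecter,Yurov}) or switch to the compactness-plus-spectral-radius argument; as written, the decisive estimate of the proposition is asserted rather than proved.
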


\section{Main results\label{sec:main}}
For the following results, we can choose $T \in [0,\infty)$ arbitrarily.
\subsection{Correction of the wave velocity\label{sec:corr_intro}}
In accordance with \eqref{eq_phi_ks} we define the \emph{translated spectral Riesz projections}
\begin{equation}\label{pi_st_phi}
\Pi^0_{st + \varphi} := \cT_{st + \varphi} \Pi^{\#,0} \cT_{-st-\varphi}, \qquad \Pi_{st + \varphi} := \cT_{st + \varphi} \Pi^{\#} \cT_{-st-\varphi},
\end{equation}
and postulate the RODE as an approximation for the noise-induced wave velocity change
\begin{subequations}\label{eq_phi_xm}
\begin{equation}\label{eq_phi}
\dot\varphi^m(t) = \tfrac{\d \varphi^m}{\d t}(t) = m \inner{\Pi^0_{st+\varphi^m(t)} X^m(t,\cdot)}{\tfrac{\d\hat X}{\d \xi}(\cdot+st+\varphi^m(t))}{H},
\end{equation}
where $m > 0$ and
\begin{eqnarray*}
X^m(t,x) &:=& X_{\varphi^m}(t,x) \stackrel{\eqref{x_phi}}{=} \tilde X(t,x) - \hat X(x + s t + \varphi^m(t)) \\
&\stackrel{\eqref{x_phi}}{=}& X(t,x) + \hat X(x + s t) - \hat X(x + s t + \varphi^m(t)).
\end{eqnarray*}
%
\begin{remark}\label{rem_optcondphi}
Indeed, from \eqref{def_phi} with the choices \eqref{pi_st_phi} and $H \stackrel{\eqref{rigged_alt}}{=} L^2(\R) \eoperp L^2(\R)$, for $X^{(0)} \in \cV$ we obtain the necessary condition for a minimum to be
\begin{eqnarray*}
0 &=& \partial_\varphi \vertii{\Pi_{st+\varphi}^0 X_\varphi(t,\cdot)}_H^2 \\
&\stackrel{\eqref{x_phi}}{=}& \partial_\varphi \vertii{\Pi_{st+\varphi}^0 \left(\tilde X(t,\cdot) - \hat X(\cdot + s t + \varphi)\right)}_H^2 \\
&\stackrel{\eqref{pi_st_phi}}{=}& \partial_\varphi \vertii{\Pi^{\#,0} \left(\tilde X(t,\cdot - s t - \varphi) - \hat X\right)}_H^2 \\
&=& - 2 \inner{\Pi^{\#,0} \left(\tilde X\left(t,\cdot - s t - \varphi\right) - \hat X\right)}{\Pi^{\#,0}\left(\left(\partial_x \tilde X\right)\left(t,\cdot - s t - \varphi\right)\right)}{H} \\
&=& - \underbrace{\int_\R \partial_\xi \verti{\diag\left(\sqrt\eps,1\right) \cdot \left(\Pi^{\#,0} \left(\tilde X\left(t,\cdot - s t - \varphi\right) - \hat X\right)\right)(\xi)}^2 \d\xi}_{= 0} \\
&& - 2 \inner{\Pi^{\#,0} \left(\tilde X\left(t,\cdot - s t - \varphi\right) - \hat X\right)}{\tfrac{\d \hat X}{\d\xi}}{H} \\
&=& - 2 \inner{\Pi^0_{st+\varphi} \left(\tilde X(t,\cdot) - \hat X(\cdot+st+\varphi)\right)}{\tfrac{\d\hat X}{\d \xi}(\cdot+st+\varphi)}{H} \\
&\stackrel{\eqref{x_phi}}{=}& - 2 \inner{\Pi^0_{st+\varphi} X_\varphi(t,\cdot)}{\tfrac{\d\hat X}{\d \xi}(\cdot+st+\varphi)}{H},
\end{eqnarray*}
so that \eqref{def_phi_2} holds true, while \eqref{eq_phi_ks} or \eqref{eq_phi} correspond to a relaxation of this condition with numerical parameter $m$.
\end{remark}
By choosing $\varphi^m$ from \eqref{eq_phi} as the velocity adaption, the SPDE \eqref{evol_lin_tw} becomes
\begin{eqnarray}
 \d X^m(t,\cdot) &=& \left(\cL_{s t + \varphi^m(t)} X^m(t,\cdot) + R^m(t,X^m(t,\cdot),\cdot) - \dot\varphi^m(t) \tfrac{\d \hat X}{\d \xi}(\cdot+st+\varphi^m(t))\right) \d t \nonumber \\
 && + \begin{pmatrix} \sigma \\ 0 \end{pmatrix} \d W(t,\cdot),\label{eq_xm}
\end{eqnarray}
\end{subequations}
where
\begin{subequations}
\begin{eqnarray}
R^m(t,Y,\cdot) &:=& R_{\varphi^m}(t,Y,\cdot) \nonumber \\
&\stackrel{\eqref{remainder}}{=}& \begin{pmatrix} f\left(w + \hat u(\cdot+s t + \varphi^m)\right) - f\left(\hat u(\cdot+s t + \varphi^m)\right) - f'\left(\hat u(\cdot + s t + \varphi^m)\right) w \\ 0 \end{pmatrix}, \nonumber \\
&& \\
Y &=& \begin{pmatrix} w \\ q \end{pmatrix}.
\end{eqnarray}
\end{subequations}
The proof of the following proposition is contained in \S\ref{sec:phase}.
\begin{proposition}\label{prop:path_ode}
Suppose $\sqrt Q \in L_2(L^2(\R);H^1(\R))$ and $u^{(0)}, v^{(0)} \in H^1(\R)$.
\begin{enumerate}[(a)]
\item\label{item:path_exist} $\PP$-almost surely, there exists a unique $(\cF_t)_{t \ge 0}$-adapted solution $\varphi^m \in C^1([0,T])$ to the path-wise ODE \eqref{eq_phi} subject to $\varphi^m(0) = 0$.
\item\label{item:path_eq} The correction $\dot\varphi^m$ to the velocity $s$ of the traveling pulse satisfies the SODE
\begin{eqnarray}
\d\dot\varphi^m(t) &=& - m \dot\varphi^m(t) \left(1 + \inner{\Pi^0_{st+\varphi^m(t)} \partial_x X^m(t,\cdot)}{\tfrac{\d\hat X}{\d\xi}\left(\cdot+st+\varphi^m(t)\right)}{H}\right) \d t \nonumber \\
&& + \sigma m \inner{\Pi^0_{st+\varphi^m(t)} \left(1,0\right)^\trans \d W(t,\cdot)}{\tfrac{\d\hat X}{\d\xi}\left(\cdot+st+\varphi^m(t)\right)}{H} \nonumber \\
&& + m \inner{\Pi^0_{st+\varphi^m(t)} R^m(t,X^m(t,\cdot),\cdot)}{\tfrac{\d \hat X}{\d\xi}\left(\cdot+st+\varphi^m(t)\right)}{H} \d t. \label{sde_vel}
\end{eqnarray}
\end{enumerate}
\end{proposition}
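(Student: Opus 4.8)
The plan is to prove part~\eqref{item:path_exist} by a fixed-point/Picard argument and then derive the SODE in part~\eqref{item:path_eq} by applying It\^o's formula to the differentiable functional defining $\dot\varphi^m$. For existence and uniqueness, I would first fix a path $\omega$ for which the regularity conclusions of Proposition~\ref{prop:reg_var} hold, i.e., $X \in C^0([0,T];\cV)$ (this requires the stated assumptions $\sqrt Q \in L_2(L^2(\R);H^1(\R))$ and $u^{(0)},v^{(0)} \in H^1(\R)$, which is exactly why they appear in the hypothesis). Writing the right-hand side of \eqref{eq_phi} as $F(t,\varphi) := m \inner{\Pi^0_{st+\varphi} X^m_\varphi(t,\cdot)}{\tfrac{\d\hat X}{\d\xi}(\cdot+st+\varphi)}{H}$, where I make the $\varphi$-dependence of $X^m$ explicit via \eqref{x_phi}, I would show that $F$ is continuous in $t$ and locally Lipschitz in $\varphi$. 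The Lipschitz bound in $\varphi$ follows because $\varphi \mapsto \cT_{st+\varphi}$ is smooth when applied to the sufficiently regular objects $\tfrac{\d\hat X}{\d\xi}$ and $\hat X$ (using the $L^\infty$ and $L^2$ bounds on derivatives of $\hat u,\hat v$ recorded after Global~Assumptions~\ref{ass:reaction}), and because $\Pi^{\#,0}$ is a bounded operator on $H$ by Proposition~\ref{prop:frozen}~\eqref{item:riesz2}. Standard Picard--Lindel\"of theory then gives a unique $C^1$ solution on a maximal interval, and a linear-growth estimate on $F$ (exploiting boundedness of the projection together with $\vertii{\tfrac{\d\hat X}{\d\xi}}_H = 1$) rules out blow-up before $T$; adaptedness is inherited from the adaptedness of $X$ since $F$ is built from $X$ by continuous operations.

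For part~\eqref{item:path_eq} I would differentiate the identity \eqref{eq_phi} in time. The key observation is that $\dot\varphi^m = m\,G(t)$ where $G(t) := \inner{\Pi^0_{st+\varphi^m(t)} X^m(t,\cdot)}{\tfrac{\d\hat X}{\d\xi}(\cdot+st+\varphi^m(t))}{H}$, so I need the It\^o differential of $G$. I would first transport everything into the frozen frame using \eqref{pi_st_phi} and the translation identity, rewriting $G(t) = \inner{\Pi^{\#,0}\,\cT_{-st-\varphi^m(t)}X^m(t,\cdot)}{\tfrac{\d\hat X}{\d\xi}}{H}$, so that only the argument $\cT_{-st-\varphi^m}X^m$ carries the time dependence through both the explicit SPDE dynamics of $X^m$ (from \eqref{eq_xm}) and the shift $\varphi^m$. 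Collecting the drift contributions, the term $-\dot\varphi^m \tfrac{\d\hat X}{\d\xi}$ in \eqref{eq_xm} pairs against $\tfrac{\d\hat X}{\d\xi}$ and, after the translation cancels, produces $-\dot\varphi^m \vertii{\Pi^{\#,0}\tfrac{\d\hat X}{\d\xi}}$-type terms; here I would use the defining property $\Pi^0_{st+\varphi}\tfrac{\d\hat X}{\d\xi}(\cdot+st+\varphi) = \tfrac{\d\hat X}{\d\xi}(\cdot+st+\varphi)$ and $\vertii{\tfrac{\d\hat X}{\d\xi}}_H = 1$ to obtain the bare $-m\dot\varphi^m\,\d t$ contribution. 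The additional drift from differentiating the shift in both the projection and the wave derivative generates the term $-m\dot\varphi^m\inner{\Pi^0_{st+\varphi^m}\partial_x X^m}{\tfrac{\d\hat X}{\d\xi}(\cdot+st+\varphi^m)}{H}\,\d t$, and substituting the $\cL$-drift and remainder $R^m$ from \eqref{eq_xm} yields the remaining drift terms, while the noise term $\bigl(\begin{smallmatrix}\sigma\\0\end{smallmatrix}\bigr)\d W$ produces exactly the stochastic integral displayed in \eqref{sde_vel}.

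The main obstacle is the careful bookkeeping of the chain rule when differentiating the $\varphi^m$-dependent translation operators acting on the only finitely regular process $X^m$. Since $\Pi^0_{st+\varphi}$ and $\tfrac{\d\hat X}{\d\xi}(\cdot+st+\varphi)$ both depend on $\varphi^m$, differentiating in time brings down spatial derivatives, and one must justify that the resulting pairings make sense: this is where the regularity $X \in C^0([0,T];\cV) = C^0([0,T];H^1(\R)\eoperp H^1(\R))$ from Proposition~\ref{prop:reg_var} is essential, since it guarantees $\partial_x X^m \in H$ so that the inner product $\inner{\Pi^0_{st+\varphi^m}\partial_x X^m}{\tfrac{\d\hat X}{\d\xi}(\cdot+st+\varphi^m)}{H}$ is well-defined. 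A second subtlety is that the linear operator $\cL_{st+\varphi^m}$ maps into $\cV$ rather than $H$, so to pair $\cL_{st+\varphi^m}X^m$ against $\tfrac{\d\hat X}{\d\xi}$ in the $H$-inner product I would either integrate by parts to move the Laplacian onto the smooth, exponentially decaying test function $\tfrac{\d\hat X}{\d\xi}$, or invoke the mild-solution formulation of Proposition~\ref{prop:reg_var}; the smoothness and decay of $\hat X$ recorded above make this legitimate. Once these regularity issues are handled, the It\^o formula applies to the (path-wise $C^1$ in $\varphi^m$, It\^o in $X^m$) composite functional $G$ and the claimed SODE \eqref{sde_vel} follows by matching drift and diffusion coefficients.
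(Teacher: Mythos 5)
Part~\eqref{item:path_exist} of your proposal follows essentially the paper's route (path-wise Picard--Lindel\"of, with the regularity $X \in C^0([0,T];\cV)$ from Proposition~\ref{prop:reg_var} supplying the needed control); one small imprecision is that the Lipschitz constant in $\varphi$ does \emph{not} come only from smoothness of $\hat X$ under translation: after transporting to the frozen frame the $\varphi$-dependence sits entirely in $\tilde X(t,\cdot-st-\varphi)$, so the Lipschitz bound requires $\sup_{t\in[0,T]}\vertii{\partial_\xi \tilde X(t,\cdot)}_H<\infty$, i.e.\ the $H^1$-regularity of the \emph{solution} $X$ itself, not just of the wave profile. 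You do have that regularity in hand from your setup, and the paper in fact obtains a global (not merely local) Lipschitz constant this way, so no continuation/blow-up argument is needed.

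In part~\eqref{item:path_eq} there is a genuine gap. Differentiating $\dot\varphi^m(t) = m\inner{\Pi^{\#,0}X^m(t,\cdot-st-\varphi^m(t))}{\tfrac{\d\hat X}{\d\xi}}{H}$ produces, from the moving frame, the full term $-(s+\dot\varphi^m(t))\,m\inner{\Pi^0_{st+\varphi^m}\partial_x X^m}{\tfrac{\d\hat X}{\d\xi}(\cdot+st+\varphi^m)}{H}\,\d t$, and from substituting \eqref{eq_xm} the term $m\inner{\Pi^0_{st+\varphi^m}\cL_{st+\varphi^m}X^m}{\tfrac{\d\hat X}{\d\xi}(\cdot+st+\varphi^m)}{H}\,\d t$. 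Neither of these appears in \eqref{sde_vel}: your bookkeeping accounts for the $\dot\varphi^m$-part of the frame derivative but says the $\cL$-drift ``yields the remaining drift terms,'' which would leave a spurious $\cL$-term and an uncancelled $-ms\inner{\Pi^0\partial_xX^m}{\cdot}{}$-term in the final SODE. The essential step you are missing is that these two combine and vanish identically:
\begin{equation*}
\Pi^0_{st+\varphi}\left(\cL_{st+\varphi} - s\partial_x\right) \stackrel{\eqref{lst_lf},\eqref{pi_st_phi}}{=} \cT_{st+\varphi}\,\Pi^{\#,0}\cL^{\#} \stackrel{\eqref{riesz_projection}}{=} \cT_{st+\varphi}\,\cL^{\#}\Pi^{\#,0} = 0,
\end{equation*}
because $\Pi^{\#,0}$ commutes with $\cL^{\#}$ by the Dunford-calculus construction and its range is $\ker\cL^{\#}=\mathrm{span}\{\tfrac{\d\hat X}{\d\xi}\}$ (Proposition~\ref{prop:frozen}~\eqref{item:point}, \eqref{frozen_tw_0}). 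Without recognizing that the advection $-s\partial_x$ generated by the frame shift reassembles $\cL_{st+\varphi}$ into the frozen-wave operator $\cL^{\#}$, and that the Riesz projection then annihilates it, your derivation does not close to \eqref{sde_vel}. (A minor additional point: since $\varphi^m$ is path-wise $C^1$, no second-order It\^o correction arises; the computation is a product/chain rule for a finite-variation process composed with the semimartingale $X^m$, which is how the paper treats it.)
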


\subsection{Reduced stochastic dynamics and multiscale analysis\label{sec:reduced_intro}}
Since for small values of $\sigma$, $X^m$ and $\dot\varphi^m$ are expected to be small, we may also consider the reduced set of equations (in which by linearity $\sigma$ can be scaled out)
\begin{subequations}\label{eq_phi_xm0}
\begin{eqnarray}
\d\dot\varphi_0^m(t) &=& - m \dot\varphi_0^m(t) \, \d t + m \inner{\Pi^0_{st} \left(1,0\right)^\trans \d W(t,\cdot)}{\tfrac{\d \hat X}{\d\xi}(\cdot+st)}{H}, \label{eq_phim0} \\
\d X_0^m(t,\cdot) &=& \left(\cL_{st} X_0^m(t,\cdot) - \dot\varphi_0^m(t) \tfrac{\d \hat X}{\d\xi}(\cdot+st)\right) \d t + \left(1,0\right)^\trans \d W(t,\cdot). \label{eq_xm0}
\end{eqnarray}
\end{subequations}
Note that the SPDE~\eqref{eq_xm0} describes the stochastic fluctuations around the stable fast pulse with stochastically adapted wave velocity. Due to the construction (cf.~the definition of $\mathcal{L}_{st}$ in~\eqref{lin_op}), the aforementioned SPDE arises from a multiscale argument and represents a leading-order approximation to track the fluctuations around the deterministically stable fast pulse.

\medskip

The following statements establish existence of solutions to \eqref{eq_phi_xm0} and yield a multiscale expansion of the solution $\tilde X = \left(\tilde u, \tilde v\right)^\trans$ to the original equation \eqref{sfhn}. Their proof is contained in \S\ref{sec:leading}. 
\begin{theorem}\label{th:multi}
For
\[
\sqrt Q \in L_2(L^2(\R);H^1(\R)), \quad X^m_0(0,\cdot) = X^{(0)}_0 \in \cV, \quad \dot\varphi^m_0(0) = m \inner{\Pi^{\#,0} X^{(0)}}{\tfrac{\d\hat X}{\d\xi}}{H}, \quad \varphi_0^m(0) = 0,
\]
we have:
\begin{enumerate}[(a)]
\item\label{item:multi_mild} The system \eqref{eq_phi_xm0} has a unique mild solution~\cite[Definition~1.1]{Seidler} given by
\begin{subequations}\label{mild_phi_x_0_m}
\begin{eqnarray}
\varphi_0^m(t) &=& \left(1-e^{-mt}\right) \inner{\Pi^{\#,0} X^{(0)}}{\tfrac{\d\hat X}{\d\xi}}{H} \nonumber \\
&& + \int_0^t \left(1-e^{-m (t-t')}\right) \inner{\Pi_{st'}^0 \left(1,0\right)^\trans \d W(t',\cdot)}{\tfrac{\d \hat X}{\d\xi}(\cdot+st')}{H}, \label{mild_phi_0_m} \\
X_0^m(t,\cdot) &=& P_{st,0} X^{(0)}_0 - \varphi_0^m(t) \tfrac{\d \hat X}{\d\xi}(\cdot+st) + \int_0^t P_{st,st'} \left(1,0\right)^\trans \d W(t',\cdot), \label{mild_x_0_m}
\end{eqnarray}
\end{subequations}
$\PP$-almost surely, where $\left(P_{st,st'}\right)_{t \ge t' \ge 0}$ is given by Proposition~\ref{prop:family}.
\item\label{item:multi_decomp} Let X be defined as in \eqref{x_phi} using Proposition~\ref{prop:ex_var} and Proposition~\ref{prop:reg_var}. Define the stopping times 
\begin{subequations}\label{stopping_times}
 \begin{align}
  \tau_{q,\sigma} &:= \inf\left(\left\{t \in [0,T]: \, \vertii{X(t,\cdot)}_\cV \ge \sigma^{1-q}\right\} \cup \{T\}\right),\label{stopping_time_X} \\
  \tau_{q,\sigma}^m &:= \inf\left(\left\{t \in [0,T]: \, \verti{\varphi^m_0(t)} \ge \sigma^{-q}\right\}\cup \{T\}\right),\label{stopping_time_phi}
 \end{align}
\end{subequations}
where $q \in \left[0,\frac 1 2\right)$.  Then, on $\left\{\min\left\{\tau_{q,\sigma},\tau_{q,\sigma}^m\right\} = T\right\}$ we have
\begin{equation}\label{decomp_x_sx_s}
\tilde X(t,\cdot) =: \hat X\left(\cdot+st+\sigma\varphi_0^m(t)\right) + \sigma X_0^m(t,\cdot) + \sigma S^m(t,\cdot),
\end{equation}
where the remainder $S^m$ meets the estimate
\begin{equation}\label{est_remainder_ms}
\vertii{S^m(t,\cdot)}_\cV \le C \sigma^{1-2q}\left(1 + \sigma^{1-q}\right), \quad \mbox{$\PP$-almost surely},
\end{equation}
with $C < \infty$ being independent of $\sigma$.
\item\label{item:multi_lim} For $q \in \left[0,\frac 1 2\right)$ it holds $\PP\left[\min\left\{\tau_{q,\sigma}, \tau_{q,\sigma}^m\right\} = T\right] \ge 1 - C \sigma^{2q} \to 1$ as $\sigma \searrow 0$ for a constant $C < \infty$.
\end{enumerate}
\end{theorem}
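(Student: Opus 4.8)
The plan is to prove the three parts in order, as each builds on the previous one. For part~\eqref{item:multi_mild}, I would first solve the linear SODE \eqref{eq_phim0} for $\dot\varphi_0^m$ explicitly: since it is an Ornstein--Uhlenbeck equation with constant damping $m$ and additive (time-inhomogeneous) noise, the variation-of-constants formula gives $\dot\varphi_0^m(t) = e^{-mt}\dot\varphi_0^m(0) + m\int_0^t e^{-m(t-t')}\inner{\Pi^0_{st'}(1,0)^\trans\d W(t',\cdot)}{\tfrac{\d\hat X}{\d\xi}(\cdot+st')}{H}$. Integrating once in time and inserting the prescribed initial data $\dot\varphi_0^m(0)=m\inner{\Pi^{\#,0}X^{(0)}}{\tfrac{\d\hat X}{\d\xi}}{H}$, a stochastic Fubini argument then collapses the double integral into \eqref{mild_phi_0_m}. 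For $X_0^m$ I would write the mild solution of the linear SPDE \eqref{eq_xm0} using the evolution family $\left(P_{st,st'}\right)$ from Proposition~\ref{prop:family}, noting that the inhomogeneity $-\dot\varphi_0^m(t)\tfrac{\d\hat X}{\d\xi}(\cdot+st)$ lies along the translation mode; using \eqref{frozen_tw_0} together with Proposition~\ref{prop:frozen}~\eqref{item:semi_2} one checks that $P_{st,st'}$ acts trivially on this mode, which is precisely what produces the clean separation into the $-\varphi_0^m(t)\tfrac{\d\hat X}{\d\xi}$ term and the stochastic convolution in \eqref{mild_x_0_m}. Well-posedness of the stochastic convolution follows from $\sqrt Q\in L_2(L^2(\R);H^1(\R))$ and the bound \eqref{bound_pstst}.

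For part~\eqref{item:multi_decomp}, the idea is to compare the true decomposition $\tilde X = \hat X(\cdot+st+\sigma\varphi_0^m) + \sigma X_0^m + \sigma S^m$, which \emph{defines} $S^m$, against the exact equation \eqref{eq_xm} satisfied by $X^m = X_{\varphi^m}$. On the event $\left\{\min\{\tau_{q,\sigma},\tau_{q,\sigma}^m\}=T\right\}$ the stopping times \eqref{stopping_times} guarantee $\vertii{X(t,\cdot)}_\cV<\sigma^{1-q}$ and $\verti{\varphi_0^m(t)}<\sigma^{-q}$, so $\sigma\verti{\varphi_0^m}<\sigma^{1-q}$ stays small. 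I would derive the evolution equation for $S^m$ by subtracting the leading-order linear system \eqref{eq_phi_xm0} (scaled by $\sigma$) from the full nonlinear system; the noise terms and the leading linear parts cancel by construction, leaving a deterministic forced equation for $S^m$ whose forcing consists of (i) the Taylor remainder $R^m$ from \eqref{remainder}, which by \eqref{cond_f_diff_2} is quadratic in the fluctuation and hence of size $\sigma^{2(1-q)}$, and (ii) the discrepancy between $\hat X(\cdot+st+\sigma\varphi_0^m)$ and its linearization $\hat X(\cdot+st)+\sigma\varphi_0^m\tfrac{\d\hat X}{\d\xi}(\cdot+st)$, which by Taylor expansion of the smooth exponentially-decaying profile is of size $(\sigma\varphi_0^m)^2\lesssim\sigma^{2(1-q)}$ in $\cV$. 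Applying the bounded evolution family $\left(P_{st,st'}\right)$ to this forcing and using \eqref{bound_pstst} over $[0,T]$ yields \eqref{est_remainder_ms}; the factor $1+\sigma^{1-q}$ absorbs the mild growth from the cubic-type terms.

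For part~\eqref{item:multi_lim}, the plan is to bound $\PP\left[\min\{\tau_{q,\sigma},\tau_{q,\sigma}^m\}<T\right]\le \PP[\tau_{q,\sigma}<T]+\PP[\tau_{q,\sigma}^m<T]$ and show each summand is $O(\sigma^{2q})$. By Chebyshev (or Doob's maximal inequality applied to the relevant second moments), $\PP[\tau_{q,\sigma}<T]\le\sigma^{2(q-1)}\,\E\sup_{t\le T}\vertii{X(t,\cdot)}_\cV^2$, and since $X$ solves the SPDE \eqref{spde_check} driven by $\sigma\,\d W$, its second moment scales like $\sigma^2$, giving $O(\sigma^{2q})$. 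Similarly $\PP[\tau_{q,\sigma}^m<T]\le\sigma^{2q}\,\E\sup_{t\le T}\verti{\varphi_0^m(t)}^2$, and the explicit formula \eqref{mild_phi_0_m} together with the Hilbert--Schmidt bound on $\sqrt Q$ and $\vertii{\tfrac{\d\hat X}{\d\xi}}_H=1$ shows $\E\sup_{t\le T}\verti{\varphi_0^m(t)}^2<\infty$ uniformly, again yielding $O(\sigma^{2q})$.

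I expect the main obstacle to be part~\eqref{item:multi_decomp}: controlling $S^m$ in the strong norm $\vertii{\cdot}_\cV$ rather than in $H$. Because the frozen-wave operator only generates a $C_0$-semigroup (not an analytic one), one cannot gain spatial regularity from the semigroup via parabolic smoothing, so the forcing terms must already be estimated in $\cV=H^1\eoperp H^1$. This forces careful use of the nonlinearity estimates \eqref{cond_f_diff_2}--\eqref{cond_f_diff_7} in $H^1$, exploiting the $C^3$-regularity and exponential decay of $\hat u$ together with the a~priori $\cV$-bound on $X$ furnished by the stopping time $\tau_{q,\sigma}$, and relies crucially on the fact that Proposition~\ref{prop:family} provides a genuine evolution family acting boundedly on $\cV$.
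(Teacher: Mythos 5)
Your part (a) and the overall architecture of (b)--(c) follow the paper, but two steps that carry the real weight of the proof are missing.

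First, in part (b): when you subtract the leading-order system \eqref{eq_phi_xm0} from the full one, the forcing of the evolution equation for $S^m$ is \emph{not} the static discrepancy $\hat X(\cdot+st+\sigma\varphi_0^m)-\hat X(\cdot+st)-\sigma\varphi_0^m\tfrac{\d\hat X}{\d\xi}(\cdot+st)$; that quantity is (up to sign) a constituent of $S^m$ itself. Differentiating $\hat X\left(\cdot+st+\sigma\varphi_0^m(t)\right)$ in time produces the forcing term
\[
S_3^m(t,\cdot)=\dot\varphi_0^m(t)\left(\tfrac{\d \hat X}{\d \xi}(\cdot+st)-\tfrac{\d \hat X}{\d \xi}\left(\cdot+st+\sigma\varphi_0^m(t)\right)\right),
\]
and $\dot\varphi_0^m$ is controlled by neither $\tau_{q,\sigma}$ nor $\tau_{q,\sigma}^m$ (only $\varphi_0^m$ is) and carries an explicit factor $m$; a direct pathwise bound on $\int_0^t P_{st,st'}S_3^m\,\d t'$ is therefore neither of order $\sigma^{1-2q}$ nor uniform in $m$. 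The paper resolves this by observing that $-\sigma S_3^m=(\partial_t-s\partial_x)\left(\hat X(\cdot+st+\sigma\varphi_0^m)-\hat X(\cdot+st)-\sigma\varphi_0^m\tfrac{\d\hat X}{\d\xi}(\cdot+st)\right)$ and integrating by parts in time against $P_{st,st'}$: this trades $\dot\varphi_0^m$ for boundary terms plus $\left(\cL_{st'}-s\partial_x\right)$ applied to the second-order Taylor remainder of the profile, all of size $O\left((\sigma\varphi_0^m)^2\right)$, at the price of estimating that remainder in the stronger norm of $H^3(\R)\eoperp H^2(\R)$ because $\cL_{st'}-s\partial_x$ loses derivatives. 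Without this step (or an equivalent reformulation, e.g.\ inserting the mild formula for $X$ so that the static Taylor remainder enters $S^m$ additively rather than through the forcing), your claimed bound on item (ii) does not follow.

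Second, in part (c): you bound $\PP\left[\tau_{q,\sigma}<T\right]$ by $\sigma^{2(q-1)}\,\E\sup_{t\le T}\vertii{X(t,\cdot)}_\cV^2$ and assert that this expectation scales like $\sigma^2$ because the noise is $\sigma\,\d W$. That scaling is precisely what is at stake: the nonlinearity $f(u+\hat u)-f(\hat u)$ is superlinear in $u$, so no direct moment estimate on the nonlinear equation gives a bound proportional to $\sigma^2$ (if it did, the stopping-time construction would be unnecessary). The paper instead evaluates the decomposition \eqref{decomp_x_sx_s} at $t=\tau_{q,\sigma}$: on $\{\tau_{q,\sigma}<T\}$ one has $\sigma^{1-q}=\vertii{X(t,\cdot)}_\cV\le C\sigma^{2-2q}+\vertii{\hat X(\cdot+st+\sigma\varphi_0^m(t))-\hat X(\cdot+st)+\sigma X_0^m(t,\cdot)}_\cV$, which reduces the probability to Markov/Burkholder--Davis--Gundy bounds on the \emph{explicit linear} objects $\varphi_0^m$ and $X_0^m$ from part (a). You should route your argument for (c) through (b) in the same way.
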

Note that by scaling out $\sigma$ beforehand, we have $X^{(0)}_0 := \sigma^{-1}X^{(0)}$, where $X^{(0)}$ is the initial condition for the unscaled equations \eqref{stoch_evol} describing the fluctuations around the traveling wave.

\subsection{Immediate relaxation\label{sec:immediate_intro}}
In the limit as $m \to \infty$ (\emph{immediate relaxation}), the system \eqref{eq_phi_xm0} further simplifies and we expect the solution $\left(X_0^m, \varphi_0^m\right)$ to \eqref{eq_phi_xm0} to converge to
\begin{subequations}\label{phi_x_inf_0}
\begin{align}
\varphi_0^\infty(t) &:= \inner{\Pi^{\#,0} X^{(0)}_0}{\tfrac{\d \hat X}{\d\xi}}{H} + \int_0^t \inner{\Pi^0_{s t'} \left(1,0\right)^\trans \d W(t',\cdot)}{\tfrac{\d\hat X}{\d\xi}(\cdot+st')}{H}, \label{phi_inf_0}\\
X_0^\infty(t,\cdot) &:= P_{st,0} \Pi^\# X^{(0)}_0 + \int_0^t P_{st,st'} \Pi_{s t'} \left(1,0\right)^\trans \d W(t',\cdot), \label{x_inf_0}
\end{align}
\end{subequations}
where $\left(P_{st,st'}\right)_{t \ge t' \ge 0}$ is given by Proposition~\ref{prop:family}.

\medskip

The proof of the following statement is contained in \S\ref{sec:immediate}.
\begin{theorem}\label{th:immediate}
Suppose $\sqrt Q \in L_2(L^2(\R);H^1(\R))$ and $X^{(0)}_0 \in \cV$. Let $\varphi^m$ be given as in Proposition~\ref{prop:path_ode}~\eqref{item:path_exist}, $X^m := X_{\varphi^m}$ be defined as in \eqref{x_phi} using Proposition~\ref{prop:ex_var} and Proposition~\ref{prop:reg_var}, and $\varphi_0^m$ and $X_0^m$ be given by Theorem~\ref{th:multi}.
\begin{enumerate}[(a)]
\item\label{item:immediate_xinf0_0} We have $\Pi_{st}^0 X_0^\infty(t,\cdot) = 0$ or equivalently $\Pi_{st} X_0^\infty(t,\cdot) = X_0^\infty(t,\cdot)$ for all $t \in [0,T]$, $\PP$-almost surely.
\item\label{item:immediate_lim} For any $\delta > 0$, we have
\begin{subequations}\label{limit_minf}
\begin{align}
\E\left[\sup_{t \in [\delta,T]} \verti{\varphi_0^m(t) - \varphi_0^\infty(t)}\right] \to 0 \quad &\mbox{as} \quad m \to \infty, \label{limit_phi_minf} \\
\E\left[\sup_{t \in [\delta,T]} \vertii{X_0^m(t,\cdot) - X_0^\infty(t,\cdot)}_\cV\right] \to 0 \quad &\mbox{as} \quad m \to \infty. \label{limit_x_minf}
\end{align}
\end{subequations}
\item\label{item:immediate_multi} Suppose $q \in \left[0,\frac 1 2\right)$, let $\tau_{q,\sigma}$ be defined as in \eqref{stopping_time_X}, and
\begin{equation}\label{t_q_sig_inf}
\tau_{q,\sigma}^\infty := \inf\left(\left\{ t \in [0,T] \colon \verti{\varphi_0^\infty(t)} \ge \sigma^{-q} \right\} \cup \{T\}\right).
\end{equation}
On $\left\{\min\left\{\tau_{q,\sigma}, \tau_{q,\sigma}^\infty\right\} = T \right\}$ the stochastic traveling wave has the multiscale decomposition
\begin{equation}\label{multi_imm_relax}
\tilde X(t,\cdot) =: \hat X \left(\cdot + st + \sigma \varphi_0^\infty(t)\right) + \sigma X_0^\infty(t,\cdot) + \sigma S^\infty(t,\cdot),
\end{equation}
with
\begin{equation}\label{est_s_inf}
\vertii{S^\infty(t,\cdot)}_\cV \le C \sigma^{1-2q} \left(1 + \sigma^{1-q}\right), \quad \mbox{$\PP$-almost surely,}
\end{equation}
where $C < \infty$ is independent of $\sigma$.
\item\label{item:immediate_stop} For $q \in \left[0,\frac 1 2\right)$ it holds $\PP\left[\min\left\{\tau_{q,\sigma}, \tau_{q,\sigma}^\infty\right\} = T\right] \ge 1 - C \sigma^{2q} \to 1$ as $\sigma \searrow 0$ for some $C < \infty$.
\item\label{item:immediate_min} On $\left\{\min\left\{\tau_{q,\sigma}, \tau_{q,\sigma}^\infty\right\} = T \right\}$, where $q \in \left[0,\frac 1 2\right)$, the function
\[
\R \owns \varphi \mapsto \vertii{\Pi_{st}^0 \left(\tilde X (t,\cdot) - \hat X\left(\cdot + st + \sigma \varphi\right)\right)}_H^2 \in \R
\]
is for $0 \leq t \leq T$ fixed, $\PP$-almost surely, locally approximately minimal at $\varphi = \varphi_0^\infty(t)$ in the sense that
\begin{subequations}\label{min_0_inf}
\begin{equation}\label{min_0_inf_1}
\left. \partial_\varphi \vertii{\Pi_{st}^0 \left(\tilde X (t,\cdot) - \hat X\left(\cdot + st + \sigma \varphi\right)\right)}_H^{2} \right|_{\varphi = \varphi_0^\infty(t)} = \cO\left(\sigma^{3-2q}\right) = o\left(\sigma^2\right),
\end{equation}
and
\begin{equation}\label{min_0_inf_2}
\left. \partial_\varphi^2 \vertii{\Pi_{st}^0 \left(\tilde X (t,\cdot) - \hat X(\cdot + st + \sigma \varphi)\right)}_H^2 \right|_{\varphi = \varphi_0^\infty(t)} = \sigma^2 \left(2 \vertii{\tfrac{\d\hat X}{\d\xi}}_H^2 + \cO\left(\sigma^{1-2q}\right)\right) > 0
\end{equation}
\end{subequations}
as $\sigma \searrow 0$, $\PP$-almost surely.
\end{enumerate}
\end{theorem}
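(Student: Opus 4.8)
The plan is to establish the five assertions in order, relying throughout on the fact that the Riesz projections commute with the frozen-wave semigroup together with the translation identities of Proposition~\ref{prop:frozen}.

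For \eqref{item:immediate_xinf0_0}, note that $\Pi^{\#,0}$ and $\Pi^\#=\id_{H_\C}-\Pi^{\#,0}$, being Riesz spectral projections of $\cL^\#$ defined by the Dunford integral \eqref{riesz_projection}, commute with $\cL^\#$ and hence with $P^\#_{st}$, and are complementary, $\Pi^{\#,0}\Pi^\#=0$. Using $\Pi^0_{st}=\cT_{st}\Pi^{\#,0}\cT_{-st}$, $\Pi_{st}=\cT_{st}\Pi^\#\cT_{-st}$ and the factorization $P_{st,st'}=\cT_{st}P^\#_{s(t-t')}\cT_{-st'}$ of Proposition~\ref{prop:frozen}~\eqref{item:semi_2}, I apply $\Pi^0_{st}$ to the mild formula \eqref{x_inf_0}. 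The deterministic summand gives $\Pi^0_{st}P_{st,0}\Pi^\#=\cT_{st}\Pi^{\#,0}P^\#_{st}\Pi^\#=\cT_{st}P^\#_{st}\Pi^{\#,0}\Pi^\#=0$; pulling the deterministic, fixed-$t$ operator $\Pi^0_{st}$ inside the It\^o integral and using $\Pi^0_{st}P_{st,st'}\Pi_{st'}=\cT_{st}\Pi^{\#,0}P^\#_{s(t-t')}\Pi^\#\cT_{-st'}=0$ annihilates the stochastic convolution. Hence $\Pi^0_{st}X^\infty_0=0$, equivalently $\Pi_{st}X^\infty_0=X^\infty_0$.

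For \eqref{item:immediate_lim} I would first record an algebraic identity that reduces the two limits to one. Applying $\Pi_{st}$ to the mild formula \eqref{mild_x_0_m} and using the same commutations yields $\Pi_{st}X^m_0=X^\infty_0$, while applying $\Pi^0_{st}$, invoking that the range of $\Pi^{\#,0}$ is $\mathrm{span}\{\tfrac{\d\hat X}{\d\xi}\}$ (the simple zero-eigenspace, Proposition~\ref{prop:frozen}~\eqref{item:point}) and the normalization $\vertii{\tfrac{\d\hat X}{\d\xi}}_H=1$, gives $\Pi^0_{st}X^m_0=(\varphi^\infty_0(t)-\varphi^m_0(t))\tfrac{\d\hat X}{\d\xi}(\cdot+st)$. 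Consequently
\[
X^m_0(t,\cdot)-X^\infty_0(t,\cdot)=\left(\varphi^\infty_0(t)-\varphi^m_0(t)\right)\tfrac{\d\hat X}{\d\xi}(\cdot+st),
\]
so \eqref{limit_x_minf} reduces to \eqref{limit_phi_minf} through $\vertii{X^m_0-X^\infty_0}_\cV\le C\,\verti{\varphi^m_0-\varphi^\infty_0}$ with $C=\vertii{\tfrac{\d\hat X}{\d\xi}}_\cV<\infty$. Subtracting \eqref{mild_phi_0_m} from \eqref{phi_inf_0}, the deterministic difference equals $-e^{-mt}\inner{\Pi^{\#,0}X^{(0)}_0}{\tfrac{\d\hat X}{\d\xi}}{H}$, which is bounded by $e^{-m\delta}$ times a constant and vanishes uniformly on $[\delta,T]$, while the stochastic difference is the Ornstein--Uhlenbeck convolution $-\int_0^t e^{-m(t-t')}\inner{\Pi^0_{st'}(1,0)^\trans\d W(t',\cdot)}{\tfrac{\d\hat X}{\d\xi}(\cdot+st')}{H}$. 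Its second moment is $\cO(m^{-1})$ by the It\^o isometry, since $\int_0^t e^{-2m(t-t')}\d t'\le\tfrac1{2m}$ and the integrand is uniformly bounded using $\sqrt Q\in L_2(L^2(\R);H^1(\R))$. The main obstacle is upgrading this to the supremum in $t$: I would use the factorization method (Da Prato--Kwapie\'n--Zabczyk) with exponent $\alpha\in(0,\tfrac12)$, estimating the factor process in $L^p$ by $\cO(m^{\alpha-1/2})$ and the deterministic outer kernel by $\cO(m^{-\alpha})$, which together force $\E\sup_{t\in[\delta,T]}\verti{\cdots}\to0$.

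For \eqref{item:immediate_multi} I would not pass to the limit in Theorem~\ref{th:multi}~\eqref{item:multi_decomp} (the stopping times differ) but repeat its proof directly for the immediate-relaxation objects: the remainder $S^\infty$ implicitly defined by \eqref{multi_imm_relax} satisfies $\sigma S^\infty=X+\hat X(\cdot+st)-\hat X(\cdot+st+\sigma\varphi^\infty_0)-\sigma X^\infty_0$, which I estimate in $\cV$ by Taylor-expanding $\hat X$ in the shift $\sigma\varphi^\infty_0$, controlling the $\Pi^\#$-part of the linearized evolution by the exponential decay \eqref{est_semigroupFrozenWave} and the $\Pi^{\#,0}$-part by the very definition of $\varphi^\infty_0$, and bounding the nonlinear remainder through the hypotheses \eqref{cond_f_diff_2}--\eqref{cond_f_diff_7}; on $\{\min\{\tau_{q,\sigma},\tau^\infty_{q,\sigma}\}=T\}$ the bounds $\vertii{X}_\cV\le\sigma^{1-q}$ and $\verti{\varphi^\infty_0}\le\sigma^{-q}$ yield \eqref{est_s_inf}. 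Assertion \eqref{item:immediate_stop} is a Chebyshev estimate on the two exit events: $\PP[\tau_{q,\sigma}<T]\le\sigma^{-2(1-q)}\E\sup_t\vertii{X}_\cV^2\le C\sigma^{2q}$ by the second-moment estimate for $X$ used in Theorem~\ref{th:multi}~\eqref{item:multi_lim}, and $\PP[\tau^\infty_{q,\sigma}<T]\le\sigma^{2q}\E\sup_t\verti{\varphi^\infty_0}^2\le C\sigma^{2q}$ by Proposition~\ref{prop:moment}; a union bound concludes. Finally, for \eqref{item:immediate_min} I differentiate directly: with $g(\varphi):=\Pi^0_{st}(\tilde X-\hat X(\cdot+st+\sigma\varphi))$ one has $\partial_\varphi g=-\sigma\Pi^0_{st}\tfrac{\d\hat X}{\d\xi}(\cdot+st+\sigma\varphi)$ and $\partial^2_\varphi g=-\sigma^2\Pi^0_{st}\tfrac{\d^2\hat X}{\d\xi^2}(\cdot+st+\sigma\varphi)$. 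At $\varphi=\varphi^\infty_0(t)$, the decomposition \eqref{multi_imm_relax} together with \eqref{item:immediate_xinf0_0} gives $g(\varphi^\infty_0)=\sigma\Pi^0_{st}S^\infty$, whence $\vertii{g(\varphi^\infty_0)}_H=\cO(\sigma^{2-2q})$ by \eqref{est_s_inf}; therefore $\partial_\varphi\vertii{g}_H^2=2\inner{g}{\partial_\varphi g}{H}=\cO(\sigma^{3-2q})=o(\sigma^2)$ for $q<\tfrac12$, which is \eqref{min_0_inf_1}. The identity $\partial^2_\varphi\vertii{g}_H^2=2\vertii{\partial_\varphi g}_H^2+2\inner{g}{\partial^2_\varphi g}{H}$ then has cross term $\cO(\sigma^{4-2q})$, while $\vertii{\partial_\varphi g}_H^2=\sigma^2(\vertii{\tfrac{\d\hat X}{\d\xi}}_H^2+\cO(\sigma^{1-q}))$ after Taylor-expanding the shift and using $\Pi^0_{st}\tfrac{\d\hat X}{\d\xi}(\cdot+st)=\tfrac{\d\hat X}{\d\xi}(\cdot+st)$; collecting the errors into $\cO(\sigma^{1-2q})$ gives \eqref{min_0_inf_2}, positive for small $\sigma$. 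I expect the maximal-inequality step in \eqref{item:immediate_lim} to be the principal technical difficulty, with the faithful transfer of the nonlinear decomposition estimate to \eqref{item:immediate_multi} the main bookkeeping burden.
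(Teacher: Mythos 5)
Your parts (a), (d), and (e) follow the paper's own proof essentially step for step: the annihilation $\Pi^{\#,0}\Pi^{\#}=0$ combined with the intertwining $P_{st,st'}=\cT_{st}P^{\#}_{s(t-t')}\cT_{-st'}$ for (a); Markov's inequality plus Burkholder--Davis--Gundy for (d) (note the moment bound you need there is $\E\big[\sup_t\verti{\varphi_0^\infty(t)}^2\big]<\infty$, which comes from the representation \eqref{phi_inf_0} and BDG, not from Proposition~\ref{prop:moment}, which controls $\E\vertii{X_0^\infty}_H^2$); and for (e) the differentiation in $\varphi$ combined with part (a) and \eqref{est_s_inf}. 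In part (b) you derive the same reduction identity $X_0^m-X_0^\infty=(\varphi_0^\infty-\varphi_0^m)\tfrac{\d\hat X}{\d\xi}(\cdot+st)$, but you handle the Ornstein--Uhlenbeck convolution by the factorization method, whereas the paper integrates by parts pathwise, uses the a.s.\ $\alpha$-H\"older continuity of $t\mapsto W(t,\cdot)\in H^1(\R)$ to obtain an $O(m^{-\alpha})$ pathwise bound (together with the $\partial_x W$ terms generated by the moving frame), and concludes by dominated convergence. Both routes close; your exponent bookkeeping ($m^{\alpha-1/2}$ for the factor process, $m^{-\alpha+\varepsilon}$ for the outer kernel when $p$ is large) is consistent, and the operator-valued integrand is uniformly bounded since $\sqrt Q\in L_2(L^2(\R);H^1(\R))$.

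The place that needs firming up is part (c). The paper does pass to the limit $m\to\infty$ in Theorem~\ref{th:multi}~\eqref{item:multi_decomp}, resolving the stopping-time mismatch with margin-$c$ stopping times $\tau^\infty_{q,\sigma,c}$ and events $E^p$ on which $\sup_t\verti{\varphi_0^m(t)}\le\sigma^{-q}$ for $m$ large, and then invoking part (b). Your direct estimation of $\sigma S^\infty=X+\hat X(\cdot+st)-\hat X(\cdot+st+\sigma\varphi_0^\infty)-\sigma X_0^\infty$ can also work, but only because of two \emph{exact} cancellations that your sketch does not make explicit: after applying $\Pi_{st}$ and using $\Pi_{st}P_{st,st'}=P_{st,st'}\Pi_{st'}$, the stochastic convolutions in the mild formulas for $X$ and for $\sigma X_0^\infty$ cancel identically, leaving only $\int_0^t P_{st,st'}\Pi_{st'}R_0\,\d t'$; and the $\Pi^0_{st}$-component of $X$ equals $\sigma\varphi_0^\infty(t)\tfrac{\d\hat X}{\d\xi}(\cdot+st)$ plus a nonlinear remainder of order $\sigma^{2-2q}$, because $\Pi^{\#,0}\cL^{\#}=0$ kills the linear drift. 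If instead you estimate the $\Pi^{\#}$-parts of $X$ and $\sigma X_0^\infty$ separately "by exponential decay", each is only $O(\sigma)$ and the claimed $O(\sigma^{2-2q})$ bound on their difference is lost; and the analogue of $S_3^m$ becomes a stochastic integral against $\d\varphi_0^\infty$, for which no pathwise a.s.\ bound with a deterministic constant (as asserted in \eqref{est_s_inf}) is available. In particular you cannot reuse the paper's integration-by-parts treatment of $S_3^m$, since $\varphi_0^\infty$ is not of bounded variation: all Taylor expansions of $\hat X$ in the shift must be performed at fixed $t$, never through a time derivative of $\hat X(\cdot+st+\sigma\varphi_0^\infty(t))$.
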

Note that Theorem~\ref{th:immediate}~\eqref{item:immediate_xinf0_0} is a generalization of the orthogonality property in \cite[\S3.3]{KruegerStannat2017}.
Theorem~\ref{th:immediate}~\eqref{item:immediate_multi} and Theorem~\ref{th:immediate}~\eqref{item:immediate_stop} yield a multiscale expansion in the immediate-relaxation limit. Finally, Theorem~\ref{th:immediate}~\eqref{item:immediate_min} relates to our original motivation in \eqref{def_phi} to obtain the correction of the translation of the wave through a minimization argument. The following result is proved in in \S\ref{sec:immediate}.

\begin{proposition}\label{prop:moment}
Suppose that $\sqrt Q \in L_2(L^2(\R);L^2(\R))$ and $X^{(0)}_0 \in H$. Then the \emph{second moment} of $X_0^\infty$ satisfies the bound
\begin{equation}\label{moment_bound}
\E\left[\vertii{X_0^\infty(t,\cdot)}_H^2\right] \le 2 C_{\vartheta}^{2} e^{-2\vartheta t} \vertii{X^{(0)}_0}_H^2 + C_{\vartheta}^{2} \frac{1-e^{-2\vartheta t}}{\vartheta} \vertii{\Pi^\#}_{L(H)}^{2} \eps Z \vertii{\sqrt Q}_{L_2(L^2(\R))}^2
\end{equation}
with $\vartheta < \min\left\{\kappa,-\lambda^*(\eps)\right\}$, where $\kappa$ and the Jones eigenvalue $\lambda^*(\eps)$ have been introduced in Proposition~\ref{prop:frozen}, and the constant $C_\vartheta \in [0,\infty)$ only depends on $\vartheta$ (and the parameters $\gamma$, $\eps$, $f$, and $\nu$ of the deterministic FitzHugh-Nagumo system \eqref{sfhn} with $\sigma = 0$) but is independent of $t$, $T$, and $\sigma$.
\end{proposition}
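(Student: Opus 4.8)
The plan is to derive a mild-solution representation for $X_0^\infty$, apply the Itô isometry for stochastic convolutions, and then bound each resulting term using the exponential decay estimate \eqref{est_semigroupFrozenWave} from Proposition~\ref{prop:frozen}~\eqref{item:riesz2}. Starting from \eqref{x_inf_0}, I would write
\[
X_0^\infty(t,\cdot) = P_{st,0} \Pi^\# X^{(0)}_0 + \int_0^t P_{st,st'} \Pi_{s t'} \left(1,0\right)^\trans \d W(t',\cdot),
\]
take the $H$-norm, and use $\vertii{a+b}_H^2 \le 2\vertii{a}_H^2 + 2\vertii{b}_H^2$ to split into a deterministic initial-data contribution and a stochastic-convolution contribution. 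The factor $2$ in the statement comes precisely from this elementary inequality, which is why the bound is not sharp in the constant.

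For the first term, by Proposition~\ref{prop:frozen}~\eqref{item:semi_2} we have $P_{st,0} = \cT_{st} P_{st}^\# \cT_0$, and since $\Pi^\#$ commutes with $P^\#_{st}$ (both being functions of $\cL^\#$ via Dunford calculus) one identifies $P_{st,0}\Pi^\# X_0^{(0)}$ with a translate of $P_{st}^\#\Pi^\# \cT_{-0}X_0^{(0)}$; translation is an $H$-isometry, so \eqref{est_semigroupFrozenWave} gives $\vertii{P_{st,0}\Pi^\# X_0^{(0)}}_H \le C_\vartheta e^{-\vartheta t}\vertii{X_0^{(0)}}_H$, and squaring yields the first summand. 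For the stochastic term I would invoke the Itô isometry,
\[
\E\left[\vertii{\int_0^t P_{st,st'} \Pi_{st'} (1,0)^\trans \d W(t',\cdot)}_H^2\right] = \int_0^t \vertii{P_{st,st'}\Pi_{st'}(1,0)^\trans \sqrt Q}_{L_2(L^2(\R);H)}^2 \d t'.
\]
Writing $P_{st,st'}\Pi_{st'} = \cT_{st}P_{s(t-t')}^\#\Pi^\#\cT_{-st'}$ and again using translation invariance, the operator-norm bound \eqref{est_semigroupFrozenWave} in $H$ pulls out $C_\vartheta^2 e^{-2\vartheta(t-t')}\vertii{\Pi^\#}_{L(H)}^2$. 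The remaining Hilbert--Schmidt norm of $(1,0)^\trans\sqrt Q$ into $H$ must be computed against the weighted inner product \eqref{h_norm}: since only the first component is active and it carries the weight $\eps Z$, one gets $\vertii{(1,0)^\trans \sqrt Q}_{L_2(L^2(\R);H)}^2 = \eps Z \vertii{\sqrt Q}_{L_2(L^2(\R))}^2$, which explains the appearance of the factor $\eps Z$ in the final bound. Integrating $e^{-2\vartheta(t-t')}$ over $t'\in[0,t]$ gives $\frac{1-e^{-2\vartheta t}}{2\vartheta}$, and the factor $2$ from the initial split cancels one factor of $2$ to produce the stated coefficient $\frac{1-e^{-2\vartheta t}}{\vartheta}$.

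The main obstacle is keeping the projection and semigroup bookkeeping consistent across the two norms $H$ and $H_\C$ and verifying that the commutation $P_{st}^\#\Pi^\# = \Pi^\# P_{st}^\#$ together with the translation-conjugation identities is fully justified in the real Hilbert space $H$ rather than only in $H_\C$; this is exactly the content of Proposition~\ref{prop:frozen}~\eqref{item:riesz2}, which guarantees the restricted operators are bounded with the same (or smaller) norms, so I would lean on that statement rather than re-deriving it. A secondary technical point is ensuring the stochastic convolution is well-defined as an $H$-valued process and that the Itô isometry applies, which follows from $\sqrt Q \in L_2(L^2(\R);L^2(\R))$ together with the uniform-in-$t'$ operator bound; once those two facts are in place, the rest is the direct estimation sketched above.
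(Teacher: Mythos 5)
Your proposal is correct and follows essentially the same route as the paper's proof: the mild representation \eqref{x_inf_0}, the elementary splitting $\vertii{a+b}_H^2 \le 2\vertii{a}_H^2 + 2\vertii{b}_H^2$, the translation-conjugation identity $P_{st,st'}\Pi_{st'} = \cT_{st}P^\#_{s(t-t')}\Pi^\#\cT_{-st'}$ combined with \eqref{est_semigroupFrozenWave} in $H$ via Proposition~\ref{prop:frozen}~\eqref{item:riesz2}, It\^o's isometry, and the computation of the Hilbert--Schmidt norm against the weighted inner product \eqref{h_norm} producing the factor $\eps Z$. Your bookkeeping of the constants (the factor $2$ from the splitting and the $\frac{1}{2\vartheta}$ from integrating $e^{-2\vartheta(t-t')}$) reproduces the stated bound exactly; the only cosmetic difference is that the paper expands the Hilbert--Schmidt norm over an orthonormal basis of $L^2(\R)$ rather than factoring out $\vertii{(1,0)^\trans\sqrt Q}_{L_2(L^2(\R);H)}^2$ directly, which amounts to the same computation.
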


The inequality~\eqref{moment_bound} is crucial for theoretical and application purposes as it bounds the expected size of fluctuations/deviations from the deterministic pulse. It shows that the second moment of the remaining deviations from the pulse, after correcting the wave velocity, are bounded by a sum of two contributions. The first term is simply due to the initial data $X^{(0)}_0$ and is exponentially decaying as in \eqref{est_semigroupFrozenWave} of Proposition~\ref{prop:frozen}~\eqref{item:riesz}. The second term is due to noise around the traveling wave, where once more the decay constant of \eqref{est_semigroupFrozenWave} enters. From a theoretical viewpoint the multiscale estimate for the second moment can then be useful in Doob/Markov-type inequalities to control the probabilities of individual sample paths~\cite{AntonopoulouBatesBloemkerKarali,BerglundGentz10,GnannKuehnPein}.

\medskip

Note that the noise contribution is of lower order compared to the second moment of fluctuations around the corresponding deterministic traveling wave (i.e., without stochastic velocity adaptation) because we have shifted appropriately to minimize the deviations in direction of the derivative of the traveling wave (which corresponds to the eigenvector with eigenvalue $0$ of the frozen-wave operator). This is made more precise in the following proposition, in which we compute the second moment of the fluctuations in direction of the derivative of the traveling wave on the event $\left\{\min\left\{\tau_{q,\sigma}, \tau_{q,\sigma}^\infty\right\} = T \right\}$ where the multiscale decomposition holds true. The proof can be found in \S\ref{sec:immediate}.
\begin{proposition}\label{prop:moment-without-adaption}
There exists a sequence $(Q_N)_{N \in \N}$ with $\sqrt{Q_N} \in L_2(L^2(\R);H^1(\R))$ such that for the deviations without adapting the wave velocity $X := X_0$ of \eqref{x_phi} (i.e., with $\varphi = 0$) with respect to $Q_N$ in the definition of the noise \eqref{def_b}, the second moment of deviations in direction of the traveling wave satisfies
\begin{eqnarray*}
\lefteqn{\E\left[ \inner{\Pi^0_{st} X(t,\cdot)}{\tfrac{\d \hat X}{\d\xi}(\cdot + st)}{H}^2 \mathds{1}_{\left\{\min\left\{\tau_{q,\sigma}, \tau_{q,\sigma}^\infty\right\} = T \right\}} \right]} \\
&=& \sigma^2 \inner{\Pi^{\#,0} X^{(0)}_0}{\tfrac{\d \hat X}{\d\xi}}{H}^2
+ \sigma^2 \eps Z  \left(t \left\| (1,0)^\trans \left(\Pi^{\#,0}\right)^*\tfrac{\d \hat X}{\d\xi}\right\|_{H}^2 + o(N^0)\right) + o(\sigma^2),
\end{eqnarray*}
where $\left\| (1,0)^\trans \left(\Pi^{\#,0}\right)^*\tfrac{\d \hat X}{\d\xi}\right\|_{H}^2 > 0$, $o(N^0)$ can depend on $t$ but not on $\sigma$, and $o(\sigma^2)$ can depend on $t$ and $N$. In particular, asymptotically the in $\sigma$ leading contribution of this moment grows linearly in time.
\end{proposition}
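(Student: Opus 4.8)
The plan is to start from the immediate-relaxation multiscale decomposition \eqref{multi_imm_relax}, which, together with \eqref{est_s_inf}, holds $\PP$-almost surely on the event $\{\min\{\tau_{q,\sigma},\tau_{q,\sigma}^\infty\}=T\}$, and to work for a fixed admissible covariance (later the $Q_N$) so that two asymptotic regimes can be separated cleanly. Since $X:=X_0$ from \eqref{x_phi} with $\varphi=0$ is exactly $\tilde X(t,\cdot)-\hat X(\cdot+st)$, I would substitute \eqref{multi_imm_relax} and Taylor-expand the shift, $\hat X(\cdot+st+\sigma\varphi_0^\infty(t))-\hat X(\cdot+st)=\sigma\varphi_0^\infty(t)\tfrac{\d\hat X}{\d\xi}(\cdot+st)+r(t,\cdot)$ with $\vertii{r(t,\cdot)}_H\le\frac12(\sigma\varphi_0^\infty(t))^2\vertii{\tfrac{\d^2\hat X}{\d\xi^2}}_H$. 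Pairing $\Pi^0_{st}X(t,\cdot)$ with $\tfrac{\d\hat X}{\d\xi}(\cdot+st)$ and using the three facts $\Pi^0_{st}\tfrac{\d\hat X}{\d\xi}(\cdot+st)=\tfrac{\d\hat X}{\d\xi}(\cdot+st)$ (range of $\Pi^{\#,0}$ by Proposition~\ref{prop:frozen}~\eqref{item:point}), the normalization $\vertii{\tfrac{\d\hat X}{\d\xi}}_H=1$, and the orthogonality $\Pi^0_{st}X_0^\infty(t,\cdot)=0$ from Theorem~\ref{th:immediate}~\eqref{item:immediate_xinf0_0}, one obtains on the good event $\inner{\Pi^0_{st}X(t,\cdot)}{\tfrac{\d\hat X}{\d\xi}(\cdot+st)}{H}=\sigma\varphi_0^\infty(t)+E(t,\cdot)$, where $\verti{E}\le C\sigma^{2-2q}$ follows from $\verti{\varphi_0^\infty}<\sigma^{-q}$, the bound \eqref{est_s_inf}, and the uniform boundedness of $\Pi^0_{st}$ on $H$ (Proposition~\ref{prop:frozen}~\eqref{item:riesz2}).

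Next I would square and take $\E[\,\cdot\,\mathds 1]$, writing the square as $\sigma^2(\varphi_0^\infty)^2+2\sigma\varphi_0^\infty E+E^2$. The essential point is that $\varphi_0^\infty$ in \eqref{phi_inf_0} is Gaussian with moments that do not depend on $\sigma$ (only on $t$ and on the chosen covariance), so $\E[\verti{\varphi_0^\infty}^k]=\cO(1)$. With $\verti{E}\le C\sigma^{2-2q}$ a.s.\ on the good event, the remainder terms give $\E[\verti{2\sigma\varphi_0^\infty E}\mathds 1]\le C\sigma^{3-2q}$ and $\E[E^2\mathds 1]\le C\sigma^{4-4q}$, both $o(\sigma^2)$ for $q\in[0,\tfrac12)$. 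To replace $\E[(\varphi_0^\infty)^2\mathds 1]$ by $\E[(\varphi_0^\infty)^2]$ I would control $\E[(\varphi_0^\infty)^2\mathds 1_{\{\min\{\tau_{q,\sigma},\tau_{q,\sigma}^\infty\}<T\}}]\le\E[(\varphi_0^\infty)^4]^{1/2}\,\PP[\min\{\tau_{q,\sigma},\tau_{q,\sigma}^\infty\}<T]^{1/2}$; Theorem~\ref{th:immediate}~\eqref{item:immediate_stop} bounds the probability by $C\sigma^{2q}\to0$, so this is $o(1)$ and the moment equals $\sigma^2\E[(\varphi_0^\infty)^2]+o(\sigma^2)$.

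It then remains to compute $\E[(\varphi_0^\infty)^2]$. Splitting $\varphi_0^\infty(t)=c_0+M(t)$ with $c_0:=\inner{\Pi^{\#,0}X^{(0)}_0}{\tfrac{\d\hat X}{\d\xi}}{H}$ deterministic and $M(t)$ the mean-zero Wiener integral, one has $\E[(\varphi_0^\infty)^2]=c_0^2+\E[M(t)^2]$. For $\E[M(t)^2]$ I would shift the projection by the adjoint identity $\inner{\Pi^0_{st'}Y}{\tfrac{\d\hat X}{\d\xi}(\cdot+st')}{H}=\inner{Y}{\cT_{st'}(\Pi^{\#,0})^*\tfrac{\d\hat X}{\d\xi}}{H}$ (valid since $\cT_{st'}$ is $H$-unitary), note from \eqref{h_norm} that only the first component $p$ of $(\Pi^{\#,0})^*\tfrac{\d\hat X}{\d\xi}$ survives the pairing with $(1,0)^\trans\d W$, and apply the It\^o isometry to get $\E[M(t)^2]=(\eps Z)^2\int_0^t\vertii{\sqrt{Q_N}\,p(\cdot+st')}_{L^2(\R)}^2\,\d t'$. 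Choosing $Q_N$ as the rank-$N$ spectral projection onto the span of the first $N$ Hermite functions (so $\sqrt{Q_N}=Q_N\in L_2(L^2(\R);H^1(\R))$ and $\sqrt{Q_N}\to\id$ strongly on $L^2(\R)$), monotone convergence gives $\int_0^t\vertii{\sqrt{Q_N}p(\cdot+st')}_{L^2}^2\,\d t'\nearrow t\vertii{p}_{L^2}^2$, and since $\eps Z\vertii{(1,0)^\trans(\Pi^{\#,0})^*\tfrac{\d\hat X}{\d\xi}}_H^2=(\eps Z)^2\vertii{p}_{L^2}^2$ this reproduces the stated $\sigma^2\eps Z\big(t\vertii{(1,0)^\trans(\Pi^{\#,0})^*\tfrac{\d\hat X}{\d\xi}}_H^2+o(N^0)\big)$ with the $o(N^0)$ independent of $\sigma$. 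Positivity reduces to $p\not\equiv0$: since $(\Pi^{\#,0})^*\tfrac{\d\hat X}{\d\xi}$ is a nonzero multiple of the zero-eigenvector $\psi=(\psi_1,\psi_2)^\trans$ of $(\cL^\#)^*$ and the second-component relation of $(\cL^\#)^*\psi=0$ is the first-order ODE $s\psi_2'=\eps(\psi_1+\gamma\psi_2)$, assuming $\psi_1\equiv0$ would force $\psi_2\propto e^{\eps\gamma\xi/s}\notin L^2(\R)$, hence $\psi\equiv0$, a contradiction; so $p=\psi_1\not\equiv0$.

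The main obstacle is the disciplined separation of the two limits rather than any single estimate. For each fixed $N$ every error in the $\sigma$-expansion must be controlled in expectation uniformly in $\sigma$, which hinges precisely on the good-event probability bound of Theorem~\ref{th:immediate}~\eqref{item:immediate_stop} and on the $\sigma$-independence of all moments of the Gaussian $\varphi_0^\infty$; only afterwards may one let $N\to\infty$ to convert $\int_0^t\vertii{\sqrt{Q_N}p(\cdot+st')}_{L^2}^2\,\d t'$ into the linear-in-$t$ leading term plus a purely $N$-dependent $o(N^0)$ correction. Keeping the $o(N^0)$ free of $\sigma$ while allowing the $o(\sigma^2)$ to depend on $N$ is the delicate point; by comparison the It\^o isometry and the $L^2$-nontriviality of the adjoint first component are routine.
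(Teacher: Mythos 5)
Your proposal is correct and reaches the paper's formula, but the reduction step is organized differently from the paper's. The paper does not invoke the multiscale decomposition \eqref{multi_imm_relax} at all: it writes the unadapted deviation $X$ via its mild-solution (Duhamel) formula, passes to the moving frame, and uses $\Pi^{\#,0}P^\#_{st}=P^\#_{st}\Pi^{\#,0}=\Pi^{\#,0}$ to reduce $\inner{\Pi^0_{st}X(t,\cdot)}{\tfrac{\d\hat X}{\d\xi}(\cdot+st)}{H}$ to the sum of the initial-datum term, a nonlinear Duhamel integral $I_1$, and a stochastic integral $I_2$; the term $I_1$ is then estimated on the good event by re-running the $S_1^m$-type nonlinearity bound from the proof of Theorem~\ref{th:multi}~\eqref{item:multi_decomp}. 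You instead substitute the already-proved decomposition $\tilde X=\hat X(\cdot+st+\sigma\varphi_0^\infty)+\sigma X_0^\infty+\sigma S^\infty$, kill $X_0^\infty$ with Theorem~\ref{th:immediate}~\eqref{item:immediate_xinf0_0}, Taylor-expand the shifted profile, and absorb both the Taylor remainder and $S^\infty$ into an $\cO(\sigma^{2-2q})$ error using \eqref{est_s_inf}. Both routes produce the same intermediate identity $\inner{\Pi^0_{st}X}{\tfrac{\d\hat X}{\d\xi}(\cdot+st)}{H}=\sigma\varphi_0^\infty(t)+\cO(\sigma^{2-2q})$ on the good event (since $\sigma\varphi_0^\infty$ is exactly the paper's initial term plus $I_2$, by translation invariance of the inner product), and from there the two arguments coincide: indicator removal via Theorem~\ref{th:immediate}~\eqref{item:immediate_stop} together with Cauchy--Schwarz, It\^o's isometry with a rank-$N$ truncation $Q_N$, and Parseval as $N\to\infty$; your Cauchy--Schwarz treatment of the cross terms against the $\sigma$-independent Gaussian moments of $\varphi_0^\infty$ is a mild shortcut relative to the paper's term-by-term bookkeeping but equally valid (and, like the paper's, silently requires $q>0$ for the probability bound $C\sigma^{2q}$ to be useful). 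For the nondegeneracy, the paper argues by contradiction through the first component of $(\cL^\#)^*\psi=0$ after a Fredholm-index argument for the existence of $\psi$, whereas you use the second, first-order component $s\psi_2'=\eps(\psi_1+\gamma\psi_2)$ and the non-integrability of $e^{\eps\gamma\xi/s}$; your variant is cleaner, but you should add the one-line justification that $(\Pi^{\#,0})^*\tfrac{\d\hat X}{\d\xi}\neq 0$, namely $\inner{(\Pi^{\#,0})^*\tfrac{\d\hat X}{\d\xi}}{\tfrac{\d\hat X}{\d\xi}}{H}=\inner{\tfrac{\d\hat X}{\d\xi}}{\Pi^{\#,0}\tfrac{\d\hat X}{\d\xi}}{H}=\vertii{\tfrac{\d\hat X}{\d\xi}}_H^2=1$, so that $p$ really is a nonzero multiple of $\psi_1$.
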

%

\section{Proofs of main results\label{sec:proofs_main}}
\subsection{Correction of the wave velocity\label{sec:phase}}
In this section, we prove Propositions~\ref{prop:path_ode}.

\begin{proof}[Proof of Proposition~\ref{prop:path_ode}~\eqref{item:path_exist}]
We use the path-wise definition
\[
F \colon [0,T] \times \R \mapsto \R, \quad (t,\varphi) \mapsto m \inner{\Pi^0_{st+\varphi} \left(\tilde X(t,\cdot) - \hat X(\cdot+st+\varphi)\right)}{\tfrac{\d\hat X}{\d \xi}(\cdot+st+\varphi)}{H}
\]
and note that by employing translation invariance of integrals, we have
\[
F(t,\varphi) \stackrel{\eqref{pi_st_phi}}{=} m \inner{\Pi^{\#,0} \left(\tilde X(t,\cdot - st - \varphi) - \hat X\right)}{\tfrac{\d\hat X}{\d \xi}}{H} \quad \mbox{for} \quad (t,\varphi) \in [0,T] \times \R.
\]
Then, we can compute for $(t_1,\varphi_1), (t_2,\varphi_2) \in [0,T] \times \R$,
\[
\verti{F(t_1,\varphi_1) - F(t_2,\varphi_2)} \le m \vertii{\Pi^{\#,0}}_{L(H)} \vertii{\tilde X(t_1,\cdot-st_1-\varphi_1) - \tilde X(t_2,\cdot-st_2-\varphi_2)}_H \vertii{\tfrac{\d\hat X}{\d\xi}}_H
\]
and on noting that
\begin{eqnarray*}
\lefteqn{\vertii{\tilde X(t_1,\cdot-st_1-\varphi_1) - \tilde X(t_2,\cdot-st_2-\varphi_2)}_H} \\
&\le& \vertii{\tilde X(t_1,\cdot-st_1-\varphi_1) - \tilde X(t_2,\cdot-st_1-\varphi_1)}_H \\
&& + \vertii{\tilde X(t_2,\cdot-st_1-\varphi_1) - \tilde X(t_2,\cdot-st_2-\varphi_2)}_H,
\end{eqnarray*}
with
\begin{eqnarray*}
\lefteqn{\vertii{\tilde X(t_1,\cdot-st_1-\varphi_1) - \tilde X(t_2,\cdot-st_1-\varphi_1)}_H} \\
&\stackrel{\eqref{def_txxhx}}{\le}& \vertii{X(t_1,\cdot-st_1-\varphi_1) - X(t_2,\cdot-st_1-\varphi_1)}_H \\
&& + \vertii{\hat X(\cdot-\varphi_1) - \hat X\left(\cdot + s (t_2 - t_1) - \varphi_1\right)}_H \\
&=& \vertii{X(t_1,\cdot) - X(t_2,\cdot)}_H + \vertii{\int_{s (t_1-t_2) + \varphi_1}^{\varphi_1} \tfrac{\d\hat X}{\d\xi}(\cdot-\xi) \d\xi}_H \\
&\le& \vertii{X(t_1,\cdot) - X(t_2,\cdot)}_H + \verti{s} \verti{t_1-t_2} \vertii{\tfrac{\d\hat X}{\d\xi}}_H
\end{eqnarray*}
and
\begin{eqnarray*}
\lefteqn{\vertii{\tilde X(t_2,\cdot-st_1-\varphi_1) - \tilde X(t_2,\cdot-st_2-\varphi_2)}_H} \\
&=& \vertii{\int_{st_1+\varphi_1}^{st_2+\varphi_2} \partial_\xi \tilde X(t_2,\cdot-\xi) \d\xi}_H \le \verti{\int_{st_1+\varphi_1}^{st_2+\varphi_2} \vertii{\partial_\xi \tilde X(t_2,\cdot-\xi)}_H \d\xi} \\
&\le& \left(\verti{s} \verti{t_1-t_2} + \verti{\varphi_1-\varphi_2}\right) \sup_{t \in [0,T]} \vertii{\partial_\xi \tilde X(t,\cdot)}_H \\
&\stackrel{\eqref{def_txxhx}}{\le}& \left(\verti{s} \verti{t_1-t_2} + \verti{\varphi_1-\varphi_2}\right) \left(\sup_{t \in [0,T]} \vertii{\partial_\xi X(t,\cdot)}_H + \vertii{\tfrac{\d\hat X}{\d\xi}}_H\right),
\end{eqnarray*}
and applying Proposition~\ref{prop:reg_var}, we obtain $\sup_{t \in [0,T]} \vertii{\partial_\xi X(t,\cdot)}_H < \infty$, $\PP$-almost surely, so that $F$ is, $\PP$-almost surely, continuous and globally Lipschitz continuous in the second component. Making use of the Picard-Lindel\"of theorem finishes the proof.
\end{proof}

\begin{proof}[Proof of Proposition~\ref{prop:path_ode}~\eqref{item:path_eq}]
We can rewrite \eqref{eq_phi} according to
\begin{eqnarray*}
\dot\varphi^m(t) &\stackrel{\eqref{eq_phi}}{=}& m \inner{\Pi^0_{st+\varphi^m(t)} X^m(t,\cdot)}{\tfrac{\d\hat X}{\d\xi}\left(\cdot+st+\varphi^m(t)\right)}{H} \\
&\stackrel{\eqref{pi_st_phi}}{=}& m \inner{\Pi^{\#,0} X^m\left(t,\cdot-st-\varphi^m(t)\right)}{\tfrac{\d\hat X}{\d\xi}}{H}, \quad \mbox{$\PP$-almost surely}.
\end{eqnarray*}
Taking the differential yields
\begin{eqnarray*}
\d\dot\varphi^m(t) &=& m \inner{\Pi^{\#,0} (\d X^m)\left(t,\cdot-st-\varphi^m(t)\right)}{\tfrac{\d\hat X}{\d\xi}}{H} \\
&& - \left(s+\dot\varphi^m(t)\right) m \inner{\Pi^{\#,0} \partial_\xi X^m\left(t,\cdot-st-\varphi^m(t)\right)}{\tfrac{\d\hat X}{\d\xi}}{H} \d t \\
&\stackrel{\eqref{pi_st_phi}}{=}& m \inner{\Pi^0_{st+\varphi^m(t)} \d X^m\left(t,\cdot\right)}{\tfrac{\d\hat X}{\d\xi}\left(\cdot+st+\varphi^m(t)\right)}{H} \d t \\
&& - \left(s+\dot\varphi^m(t)\right) m \inner{\Pi^0_{st+\varphi^m(t)} \partial_\xi X^m\left(t,\cdot\right)}{\tfrac{\d\hat X}{\d\xi}\left(\cdot+st+\varphi^m(t)\right)}{H} \d t \\
&\stackrel{\eqref{eq_xm}}{=}& m \inner{\Pi^0_{st+\varphi^m(t)} \cL_{st+\varphi^m(t)} X^m\left(t,\cdot\right)}{\tfrac{\d\hat X}{\d\xi}\left(\cdot+st+\varphi^m(t)\right)}{H} \d t \\
&& + m \inner{\Pi^0_{st+\varphi^m(t)} R^m\left(t,X^m\left(t,\cdot\right),\cdot\right)}{\tfrac{\d\hat X}{\d\xi}\left(\cdot+st+\varphi^m(t)\right)}{H} \d t \\
&& - m \dot\varphi^m(t) \inner{\Pi^0_{st+\varphi^m(t)} \tfrac{\d\hat X}{\d\xi}\left(\cdot+st+\varphi^m(t)\right)}{\tfrac{\d\hat X}{\d\xi}\left(\cdot+st+\varphi^m(t)\right)}{H} \d t \\
&& + \sigma m \inner{\Pi^0_{st+\varphi^m(t)} \left(1,0\right)^\trans \d W(t,\cdot)}{\tfrac{\d\hat X}{\d\xi}\left(\cdot+st+\varphi^m(t)\right)}{H} \\
&& - m \left(s+\dot\varphi^m(t)\right) \inner{\Pi^0_{st+\varphi^m(t)} \partial_x X^m\left(t,\cdot\right)}{\tfrac{\d\hat X}{\d\xi}\left(\cdot+st+\varphi^m(t)\right)}{H} \d t,
\end{eqnarray*}
$\PP$-almost surely. Now, we note that
\begin{eqnarray*}
\lefteqn{\inner{\Pi^0_{st+\varphi^m(t)} \tfrac{\d\hat X}{\d\xi}\left(\cdot+st+\varphi^m(t)\right)}{\tfrac{\d\hat X}{\d\xi}\left(\cdot+st+\varphi^m(t)\right)}{H}} \\
&\stackrel{\eqref{pi_st_phi}}{=}& \inner{\Pi^{\#,0} \tfrac{\d\hat X}{\d\xi}}{\tfrac{\d\hat X}{\d\xi}}{H} \stackrel{\eqref{frozen_tw_0},\eqref{riesz_projection}}{=} \vertii{\tfrac{\d\hat X}{\d\xi}}_H^2\stackrel{\eqref{h_norm}}{=} 1,
\end{eqnarray*}
so that we obtain the simplification
\begin{eqnarray*}
\d\dot\varphi^m(t) &=&m \inner{\Pi^0_{st+\varphi^m(t)} \left(\cL_{st+\varphi^m(t)} - s \partial_x\right) X^m\left(t,\cdot\right)}{\tfrac{\d\hat X}{\d\xi}\left(\cdot+st+\varphi^m(t)\right)}{H} \d t \\
&& - m \dot\varphi^m(t) \left(1 + \inner{\Pi^0_{st+\varphi^m(t)} \partial_x X^m\left(t,\cdot\right)}{\tfrac{\d\hat X}{\d\xi}\left(\cdot+st+\varphi^m(t)\right)}{H}\right) \d t \\
&& + \sigma m \inner{\Pi^0_{st+\varphi^m(t)} \left(1,0\right)^\trans \d W(t,\cdot)}{\tfrac{\d\hat X}{\d\xi}\left(\cdot+st+\varphi^m(t)\right)}{H} \\
&& + m \inner{\Pi^0_{st+\varphi^m(t)} R^m\left(t,X^m\left(t,\cdot\right),\cdot\right)}{\tfrac{\d\hat X}{\d\xi}\left(\cdot+st+\varphi^m(t)\right)}{H} \d t,
\end{eqnarray*}
$\PP$-almost surely. Next, we use \eqref{lin_op}, \eqref{fw_op}, \eqref{translation}, \eqref{riesz_projection}, and \eqref{pi_st_phi} to conclude that
\[
\Pi^0_{st+\varphi^m(t)} \left(\cL_{st+\varphi^m(t)} - s \partial_x\right) = \cT_{st+\varphi^m(t)} \Pi^{\#,0} \cL^\# \stackrel{\eqref{riesz_projection}}{=} \cT_{st+\varphi^m(t)} \cL^\# \Pi^{\#,0}
\]
and therefore
\begin{eqnarray*}
\lefteqn{\inner{\Pi^0_{st+\varphi^m(t)} \left(\cL_{st+\varphi^m(t)} - s \partial_x\right) X^m\left(t,\cdot\right)}{\tfrac{\d\hat X}{\d\xi}\left(\cdot+st+\varphi^m(t)\right)}{H}} \\
&=& \inner{\cL^\# \Pi^{\#,0} X^m\left(t,\cdot-st-\varphi^m(t)\right)}{\tfrac{\d\hat X}{\d\xi}}{H} \\
&\stackrel{\eqref{h_norm}, \eqref{yj_z}}{=}& \inner{\cL^\# \tfrac{\d \hat X}{\d\xi}}{\tfrac{\d\hat X}{\d\xi}}{H} \inner{\Pi^{\#,0} X^m\left(t,\cdot-st-\varphi^m(t)\right)}{\tfrac{\d\hat X}{\d\xi}}{H} \stackrel{\eqref{frozen_tw_0}}{=} 0,
\end{eqnarray*}
$\PP$-almost surely, so that we end up with \eqref{sde_vel}.
\end{proof}

\subsection{Reduced stochastic dynamics and multiscale analysis\label{sec:leading}}
In this section, we prove Theorem~\ref{th:multi}. We note that the existence and uniqueness of mild solutions for all non-autonomous linear SPDEs appearing in the next proof is guaranteed by standard results, see for instance \cite[Theorem~1.3]{Seidler} or the very general approach in~\cite{Veraar2010}.

\begin{proof}[Proof of Theorem~\ref{th:multi}~\eqref{item:multi_mild}] 
Since equation~\eqref{eq_phim0} decouples from \eqref{eq_xm0}, it forms a linear SDE for which we have a unique mild solution given by
\[
\dot\varphi_0^m(t) = m e^{-mt} \inner{\Pi^{\#,0} X^{(0)}_0}{\tfrac{\d\hat X}{\d\xi}}{H} + m \int_0^t e^{-m (t-t')} \inner{\Pi_{st'}^0 \left(1,0\right)^\trans \d W(t',\cdot)}{\tfrac{\d\hat X}{\d\xi}(\cdot+st')}{H},
\]
$\PP$-almost surely. Another integration using $\varphi_0^m(0) = 0$ yields
\begin{eqnarray*}
\varphi_0^m(t) &=& \left(1-e^{-mt}\right) \inner{\Pi^{\#,0} X^{(0)}_0}{\tfrac{\d\hat X}{\d\xi}}{H} \nonumber \\
&& + m \int_0^t \int_0^{t'} e^{-m (t'-t'')} \inner{\Pi_{st''}^0 \left(1,0\right)^\trans \d W(t'',\cdot)}{\tfrac{\d \hat X}{\d\xi}(\cdot+st'')}{H} \d t',
\end{eqnarray*}
$\PP$-almost surely. Utilizing
\begin{eqnarray*}
\lefteqn{m \int_0^t \int_0^{t'} e^{-m (t'-t'')} \inner{\Pi_{st''}^0 \left(1,0\right)^\trans \d W(t'',\cdot)}{\tfrac{\d \hat X}{\d\xi}(\cdot+st'')}{H} \d t'} \\
&=& \int_0^t \int_0^{t-t''} m e^{-m t'''} \, \d t''' \inner{\Pi_{st''}^0 \left(1,0\right)^\trans \d W(t'',\cdot)}{\tfrac{\d \hat X}{\d\xi}(\cdot+st'')}{H} \\
&=& \int_0^t \left(1-e^{-m (t-t'')}\right) \inner{\Pi_{st''}^0 \left(1,0\right)^\trans \d W(t'',\cdot)}{\tfrac{\d \hat X}{\d\xi}(\cdot+st'')}{H},
\end{eqnarray*}
we end up with \eqref{mild_phi_0_m}.

\medskip

Since $\varphi_0^m$ is already uniquely defined, we may uniquely solve \eqref{eq_xm0} with the mild-solution formula
\[
X_0^m(t,\cdot) = P_{st,0} X^{(0)}_0 - \int_0^t \dot\varphi_0^m(t') P_{st,st'} \tfrac{\d \hat X}{\d\xi}(\cdot+st') \, \d t' + \int_0^t P_{st,st'} \left(1,0\right)^\trans \d W(t',\cdot),
\]
$\PP$-almost surely. With help of Proposition~\ref{prop:frozen}~\eqref{item:frozen_semi} and \eqref{item:semi_2} it follows
\[
P_{st,st'} \tfrac{\d \hat X}{\d\xi}(\cdot+st') \stackrel{\eqref{translation}}{=} \cT_{st} P^\#_{s (t-t')} \tfrac{\d\hat X}{\d\xi} \stackrel{\eqref{translation}, \eqref{frozen_tw_0}}{=} \tfrac{\d\hat X}{\d\xi}(\cdot+st),
\]
so that
\[
\int_0^t \dot\varphi_0^m(t') P_{st,st'} \tfrac{\d \hat X}{\d\xi}(\cdot+st') \d t' = \int_0^t \dot\varphi_0^m(t') \, \d t' \, \tfrac{\d \hat X}{\d\xi}(\cdot+st) = \varphi_0^m(t) \tfrac{\d \hat X}{\d\xi}(\cdot+st)
\]
and we arrive at \eqref{mild_x_0_m}.
\end{proof}

\begin{proof}[Proof of Theorem~\ref{th:multi}~\eqref{item:multi_decomp}]
In what follows, we restrict ourselves to paths on $\left\{\min\left\{\tau_{q,\sigma}, \tau_{q,\sigma}^m\right\} = T \right\}$, which ensures smallness of $X(t,\cdot)$ and $\varphi_0^m(t)$.

\medskip

We consider the remainder
\begin{eqnarray*}
\sigma S^m(t,\cdot) &\stackrel{\eqref{decomp_x_sx_s}}{=}& \tilde X(t,\cdot) - \hat X\left(\cdot + s t + \sigma \varphi_0^m(t)\right) - \sigma X_0^m(t,\cdot) \\
 &\stackrel{\eqref{def_txxhx}}{=}& X(t,\cdot) + \hat X\left(\cdot + s t\right) - \hat X\left(\cdot + s t + \sigma \varphi_0^m(t)\right) - \sigma X_0^m(t,\cdot),
\end{eqnarray*}
so that with
\begin{eqnarray*}
 \d X(t,\cdot) &\stackrel{\eqref{evol_lin_tw},\eqref{remainder}}{=}& \cL_{st} X(t,\cdot) \, \d t + \begin{pmatrix} \sigma \\ 0 \end{pmatrix} \d W(t,\cdot), \\
 && + \begin{pmatrix} f\left(u(t,\cdot) + \hat u(\cdot+s t)\right) - f\left(\hat u(\cdot+s t)\right) - f'\left(\hat u(\cdot + s t)\right) u(t,\cdot) \\ 0 \end{pmatrix} \d t \nonumber \\
 \d \hat X\left(\cdot + s t\right) &\stackrel{\eqref{fhn_tw},\eqref{lin_op}}{=}& \cL_{st} \hat X\left(\cdot + s t\right) \d t + \begin{pmatrix} f\left(\hat u\left(\cdot + s t\right)\right) - f'\left(\hat u\left(\cdot + s t\right)\right) \hat u\left(\cdot + s t\right) \\ 0 \end{pmatrix} \d t, 
\end{eqnarray*}
as well as
\begin{eqnarray*}
 \lefteqn{\d \hat X\left(\cdot + s t + \sigma \varphi_0^m(t)\right)} \\
 &\stackrel{\eqref{fhn_tw},\eqref{lin_op}}{=}& \cL_{st} \hat X\left(\cdot + s t + \sigma \varphi_0^m(t)\right) \d t \\
 && + \begin{pmatrix} f\left(\hat u\left(\cdot + s t + \sigma \varphi_0^m(t)\right)\right) - f'\left(\hat u\left(\cdot + s t\right)\right) \hat u\left(\cdot + s t + \sigma \varphi_0^m(t)\right) \\ 0 \end{pmatrix}  \d t \\
 && + \sigma \dot\varphi_0^m(t) \tfrac{\d \hat X}{\d \xi}\left(\cdot + s t + \sigma \varphi_0^m(t)\right) \d t
\end{eqnarray*}
\mbox{and}
\[
 \d X_0^m(t,\cdot) \stackrel{\eqref{eq_xm0}}{=} \cL_{st} X_0^m\left(t,\cdot\right) \d t - \dot\varphi_0^m(t) \tfrac{\d \hat X}{\d \xi}\left(\cdot + s t\right) \d t + \begin{pmatrix} 1 \\ 0 \end{pmatrix} \d W(t,\cdot),
\]
we get with
\begin{equation}\label{def_tu0m}
 \tilde u_0^m(t,\cdot) := u(t,\cdot) + \hat u\left(\cdot + s t\right) - \hat u\left(\cdot + s t + \sigma \varphi_0^m(t)\right)
\end{equation}
that
\[
\d S^m(t,\cdot) - \cL_{st} S^m\left(t,\cdot\right) \d t = S_1^m(t,\cdot) + S_2^m(t,\cdot) + S_3^m(t,\cdot),
\]
where
\begin{subequations}\label{def_sjm}
\begin{eqnarray}
 S_1^m(t,\cdot) &:=& \sigma^{-1} \begin{pmatrix} f\left(\hat u\left(\cdot + s t + \sigma \varphi_0^m(t)\right) + \tilde u_0^m(t,\cdot)\right) - f\left(\hat u\left(\cdot + s t + \sigma \varphi_0^m(t)\right)\right) \\ 0 \end{pmatrix} \nonumber \\
 && - \sigma^{-1} \begin{pmatrix} f'\left(\hat u\left(\cdot + s t + \sigma \varphi_0^m(t)\right)\right) \tilde u_0^m(t,\cdot) \\ 0 \end{pmatrix}, \label{def_s1m} \\
S_2^m(t,\cdot) &:=& \sigma^{-1} \left(f'\left(\hat u\left(\cdot + s t + \sigma \varphi_0^m(t)\right)\right) - f'\left(\hat u\left(\cdot + s t\right)\right)\right) \begin{pmatrix} \tilde u_0^m(t,\cdot) \\ 0 \end{pmatrix}, \label{def_s2m} \\
S_3^m(t,\cdot) &:=& \dot\varphi_0^m(t) \left(\tfrac{\d \hat X}{\d \xi}\left(\cdot + s t\right) - \tfrac{\d \hat X}{\d \xi}\left(\cdot + s t + \sigma \varphi_0^m(t)\right)\right). \label{def_s3m}
\end{eqnarray}
\end{subequations}
Since $\left(\cL_{st}\right)_{t \ge 0}$ generates an evolution family $\left(P_{st,st'}\right)_{t \ge t' \ge 0}$ (cf.~Proposition~\ref{prop:family}), we find the mild-solution representation
\begin{equation} \label{duhamel_xm_x0m}
S^m(t,\cdot) = \int_0^t P_{st,st'} \left(S_1^m(t',\cdot) + S_2^m(t',\cdot) + S_3^m(t',\cdot)\right) \d t', \quad \mbox{$\PP$-almost surely}.
\end{equation}
Note that \eqref{duhamel_xm_x0m} can be rigorously justified by following the arguments detailed at the beginning of the proof of Proposition~\ref{prop:reg_var}. We continue by treating the terms $\int_0^t P_{s t, s t'} S_j^m(t',\cdot) \, \d t'$ for $j \in \{1,2,3\}$ separately.

\medskip

First observe that we have the point-wise estimate
\begin{eqnarray*}
\verti{S_1^m(t,x)} &\stackrel{\eqref{cond_f_diff_2},\eqref{def_s1m}}{\le}& \sigma^{-1} \eta_2 \left(1 + \verti{\hat u\left(x + s t + \sigma \varphi_0^m(t)\right)} + \verti{\tilde u_0^m(t,x)}\right) \verti{\tilde u_0^m(t,x)}^2 \\
&\le& \sigma^{-1} \eta_2 \left(1 + \vertii{\hat u}_{L^\infty(\R)} + \vertii{\tilde u_0^m(t,\cdot)}_{H^1(\R)}\right) \vertii{\tilde u_0^m(t,\cdot)}_{H^1(\R)} \verti{\tilde u_0^m(t,x)},
\end{eqnarray*}
where the Sobolev embedding in form of $\vertii{\tilde u_0^m(t,\cdot)}_{L^\infty(\R)} \le \vertii{\tilde u_0^m(t,\cdot)}_{H^1(\R)}$ has been applied. This already yields
\begin{equation}\label{est_s1m_1}
\vertii{S_1^m(t,\cdot)}_H \stackrel{\eqref{h_norm}}{\le} \sigma^{-1} \sqrt{\eps Z} \, \eta_2 \left(1 + \vertii{\hat u}_{L^\infty(\R)} + \vertii{\tilde u_0^m(t,\cdot)}_{H^1(\R)}\right) \vertii{\tilde u_0^m(t,\cdot)}_{H^1(\R)} \vertii{\tilde u_0^m(t,\cdot)}_{L^2(\R)}.
\end{equation}
On the other hand, a direct computation gives (suppressing the arguments in $S_1^m = S_1^m(t,x)$, $u_0^m = u_0^m(t,x)$, $\hat u = \hat u\left(x + s t + \sigma \varphi_0^m(t)\right)$, and $\tilde u_0^m = \tilde u_0^m(t,x)$)
\begin{eqnarray*}
\sigma \partial_x S_1^m &\stackrel{\eqref{def_s1m}}{=}& \left(f'\left(\hat u + \tilde u_0^m\right) \left(\partial_x \tilde u_0^m + \tfrac{\d \hat u}{\d\xi}\right) - f'\left(\hat u\right) \tfrac{\d \hat u}{\d\xi} - f''\left(\hat u\right) \tfrac{\d \hat u}{\d\xi} \tilde u_0^m - f'\left(\hat u\right) \partial_x \tilde u_0^m\right) (1,0)^\trans \\
&=& \left(\left(f'\left(\hat u + \tilde u_0^m\right) - f'\left(\hat u\right)  - f''\left(\hat u\right) \tilde u_0^m\right) \tfrac{\d \hat u}{\d\xi} + \left(f'\left(\hat u + \tilde u_0^m\right) - f'\left(\hat u\right)\right) \partial_x \tilde u_0^m\right) (1,0)^\trans.
\end{eqnarray*}
Now, we note that
\begin{eqnarray*}
\verti{\left(f'\left(\hat u+\tilde u_0^m\right) - f'\left(\hat u\right)  - f''\left(\hat u\right) \tilde u_0^m\right) \tfrac{\d \hat u}{\d\xi}} &\stackrel{\eqref{cond_f_diff_5}}{\le}& \eta_5 \left(\tilde u_0^m(t,x)\right)^2 \verti{\tfrac{\d \hat u}{\d\xi}\left(x + s t + \sigma \varphi_0^m(t)\right)} \\
&\le& \eta_5 \vertii{\tilde u_0^m(t,\cdot)}_{H^1(\R)} \vertii{\tfrac{\d\hat u}{\d\xi}}_{L^\infty(\R)} \verti{\tilde u_0^m(t,x)}, \\
\verti{\left(f'\left(\hat u + \tilde u_0^m\right) - f'\left(\hat u\right)\right) \partial_x \tilde u_0^m} &\stackrel{\eqref{cond_f_diff_6}}{\le}& \eta_6 \verti{\tilde u_0^m(t,x)} \verti{\partial_x \tilde u_0^m(t,x)} \\
&& \left(1 + 2 \verti{\hat u\left(x + s t + \sigma \varphi_0^m(t)\right)} + \verti{\tilde u_0^m(t,x)}\right) \\
&\le& \eta_6 \vertii{\tilde u_0^m(t,\cdot)}_{H^1(\R)} \verti{\partial_x \tilde u_0^m(t,x)} \\
&& \left(1 + 2 \vertii{\hat u}_{L^\infty(\R)} + \vertii{\tilde u_0^m(t,\cdot)}_{H^1(\R)}\right),\end{eqnarray*}
where $\vertii{\tilde u_0^m(t,\cdot)}_{L^\infty(\R)} \le \vertii{\tilde u_0^m(t,\cdot)}_{H^1(\R)}$ has been utilized once again. Hence,
\begin{eqnarray*}
\lefteqn{\vertii{\partial_x S_1^m(t,\cdot)}_H} \\
&\stackrel{\eqref{h_norm}}{\le}&  \sigma^{-1} \sqrt{\eps Z} \, \eta_5 \vertii{\tilde u_0^m(t,\cdot)}_{H^1(\R)} \vertii{\tfrac{\d\hat u}{\d\xi}}_{L^\infty(\R)} \vertii{\tilde u_0^m(t,\cdot)}_{L^2(\R)} \\
&& +  \sigma^{-1} \sqrt{\eps Z} \, \eta_6 \vertii{\tilde u_0^m(t,\cdot)}_{H^1(\R)} \vertii{\partial_x \tilde u_0^m(t,\cdot)}_{L^2(\R)} \left(1 + 2 \vertii{\hat u}_{L^\infty(\R)} + \vertii{\tilde u_0^m(t,\cdot)}_{H^1(\R)}\right).
\end{eqnarray*}
The combination with \eqref{est_s1m_1} yields
\begin{equation}\label{est_s1m}
\vertii{S_1^m(t,\cdot)}_\cV \stackrel{\eqref{sv_norm}}{\le} \sigma^{-1} \, C_1 \vertii{\tilde u_0^m(t,\cdot)}_{H^1(\R)}^2 \left(1+\vertii{\tilde u_0^m(t,\cdot)}_{H^1(\R)}\right),
\end{equation}
where $C_1 < \infty$ is independent of $\sigma$.

\medskip

Next, we estimate $S_2^m$. Notice that on one hand we have
\begin{eqnarray*}
\verti{f'\left(\hat u\left(x + s t + \sigma \varphi_0^m(t)\right)\right) - f'\left(\hat u(x+st)\right)} &\stackrel{\eqref{cond_f_diff_6}}{\le}& \eta_6 \verti{\hat u\left(x + s t + \sigma \varphi_0^m(t)\right) - \hat u(x+st)} \\
&& \left(1 + \verti{\hat u\left(x + s t + \sigma \varphi_0^m(t)\right)} + \verti{\hat u(x+st)}\right) \\
&\le& \eta_6 \vertii{\tfrac{\d\hat u}{\d\xi}}_{L^\infty(\R)} \left(1 + 2 \vertii{\hat u}_{L^\infty(\R)}\right) \sigma \verti{\varphi_0^m(t)},
\end{eqnarray*}
so that
\begin{equation}\label{est_s2m_1}
\vertii{S_2^m(t,\cdot)}_H \stackrel{\eqref{h_norm}, \eqref{def_s2m}}{\le}  \sqrt{\eps Z} \, \eta_6 \vertii{\tfrac{\d\hat u}{\d\xi}}_{L^\infty(\R)} \left(1 + 2 \vertii{\hat u}_{L^\infty(\R)}\right) \verti{\varphi_0^m(t)} \vertii{\tilde u_0^m(t,\cdot)}_{L^2(\R)}.
\end{equation}
On the other hand, for the derivative we get
\begin{eqnarray*}
\lefteqn{\partial_x \left[\left(f'\left(\hat u(x+st+\sigma \varphi_0^m(t))\right) - f'\left(\hat u(x+st)\right)\right) \tilde u_0^m(t,x)\right]} \\
&=& \left[f''\left(\hat u\left(x+st+\sigma \varphi_0^m(t)\right)\right) \tfrac{\d \hat u}{\d\xi}\left(x+st+ \sigma \varphi_0^m(t)\right) - f''\left(\hat u(x+st)\right) \tfrac{\d \hat u}{\d\xi}(x+st)\right] \tilde u_0^m(t,x) \\
&& + \left[f'\left(\hat u\left(x+st+\sigma \varphi_0^m(t)\right)\right) - f'\left(\hat u(x+st)\right)\right] \partial_x \tilde u_0^m(t,x) \\
&=& \left[f''\left(\hat u\left(x+st+ \sigma \varphi_0^m(t)\right)\right) - f''\left(\hat u(x+st)\right)\right] \tfrac{\d \hat u}{\d\xi}\left(x+st+ \sigma \varphi_0^m(t)\right) \tilde u_0^m(t,x) \\
&& + f''\left(\hat u(x+st)\right) \left[\tfrac{\d \hat u}{\d\xi}\left(x+st+ \sigma \varphi_0^m(t)\right) - \tfrac{\d \hat u}{\d\xi}(x+st)\right] \tilde u_0^m(t,x) \\
&& + \left[f'\left(\hat u(x+st+ \sigma \varphi_0^m(t))\right) - f'\left(\hat u(x+st)\right)\right] \partial_x \tilde u_0^m(t,x),
\end{eqnarray*}
so that
\begin{eqnarray*}
\lefteqn{\verti{\partial_x \left[\left(f'\left(\hat u(x+st+\sigma \varphi_0^m(t))\right) - f'\left(\hat u(x+st)\right)\right) \tilde u_0^m(t,x)\right]}} \\
&\stackrel{\eqref{cond_f_diff_6},\eqref{cond_f_diff_7}}{\le}& \eta_7 \verti{\hat u(x+st+\sigma \varphi_0^m(t)) - \hat u(x+st)} \verti{\tfrac{\d \hat u}{\d\xi}\left(x+st+ \sigma \varphi_0^m(t)\right)} \verti{\tilde u_0^m(t,x)} \\
&& + \verti{f''\left(\hat u(x+st)\right)} \verti{\tfrac{\d \hat u}{\d\xi}\left(x+st+ \sigma \varphi_0^m(t)\right) - \tfrac{\d \hat u}{\d\xi}(x+st)} \verti{\tilde u_0^m(t,x)} \\
&& + \eta_6 \verti{\hat u(x+st+ \sigma \varphi_0^m(t)) - \hat u(x+st)} \left(1 + \verti{\hat u(x+st+ \sigma \varphi_0^m(t))} + \verti{\hat u(x+st)}\right) \\
&& \phantom{+} \verti{\partial_x \tilde u_0^m(t,x)} \\
&\le& \eta_7 \vertii{\tfrac{\d\hat u}{\d\xi}}_{L^\infty(\R)}^2 \sigma \verti{\varphi_0^m(t)} \verti{\tilde u_0^m(t,x)} + \vertii{f''\left(\hat u\right)}_{L^\infty(\R)} \vertii{\tfrac{\d^2\hat u}{\d\xi^2}}_{L^\infty(\R)} \sigma \verti{\varphi_0^m(t)} \verti{\tilde u_0^m(t,x)} \\
&& + \eta_6 \vertii{\tfrac{\d\hat u}{\d\xi}}_{L^\infty(\R)} \left(1 + 2 \vertii{\hat u}_{L^\infty(\R)}\right) \sigma \verti{\varphi_0^m(t)} \verti{\partial_x \tilde u_0^m(t,x)}
\end{eqnarray*}
This gives
\begin{eqnarray*}
\lefteqn{\vertii{\partial_x \left[\left(f'\left(\hat u(\cdot+st+\sigma \varphi_0^m(t))\right) - f'\left(\hat u(\cdot+st)\right)\right) \tilde u_0^m(t,\cdot)\right]}_H} \\
&\stackrel{\eqref{h_norm}}{\le}& \sqrt{\eps Z} \, \eta_7 \vertii{\tfrac{\d\hat u}{\d\xi}}_{L^\infty(\R)}^2 \sigma \verti{\varphi_0^m(t)} \vertii{\tilde u_0^m(t,\cdot)}_{L^2(\R)} \\
&& + \sqrt{\eps Z} \vertii{f''\left(\hat u\right)}_{L^\infty(\R)} \vertii{\tfrac{\d^2\hat u}{\d\xi^2}}_{L^\infty(\R)} \sigma \verti{\varphi_0^m(t)} \vertii{\tilde u_0^m(t,\cdot)}_{L^2(\R)} \\
&& + \sqrt{\eps Z} \, \eta_6 \vertii{\tfrac{\d\hat u}{\d\xi}}_{L^\infty(\R)} \left(1 + 2 \vertii{\hat u}_{L^\infty(\R)}\right) \sigma \verti{\varphi_0^m(t)} \vertii{\partial_x \tilde u_0^m(t,\cdot)}_{L^2(\R)}
\end{eqnarray*}
and the combination with \eqref{est_s2m_1} yields
\begin{equation}\label{est_s2m}
\vertii{S_2^m(t,\cdot)}_\cV \stackrel{\eqref{sv_norm}}{\le} C_2 \verti{\varphi_0^m(t)} \vertii{\tilde u_0^m(t,\cdot)}_{H^1(\R)},
\end{equation}
where $C_2 < \infty$ is independent of $\sigma$.

\medskip

For estimating $\vertii{\int_0^t P_{s t,s t'} S_3^m(t',\cdot) \, \d t'}_\cV$, observe that
\begin{eqnarray*}
\lefteqn{(\partial_t - s \partial_x) \left(\hat X\left(x + s t + \sigma \varphi_0^m(t)\right) - \hat X(x + s t) - \tfrac{\d \hat X}{\d \xi}(x + s t) \sigma \varphi_0^m(t)\right)} \\
&=& \sigma \dot \varphi_0^m(t) \left(\tfrac{\d \hat X}{\d \xi}\left(x + s t + \sigma \varphi_0^m(t)\right) - \tfrac{\d \hat X}{\d \xi}(x+st)\right) \stackrel{\eqref{def_s3m}}{=} - \sigma S_3^m(t,\cdot).
\end{eqnarray*}
Using integration by parts, this gives
\begin{eqnarray*}
\lefteqn{\sigma \int_0^t P_{st,st'} S_3^m(t',\cdot) \d t' } \\
&=& - \int_0^t P_{st,st'} (\partial_{t'} - s \partial_x) \left(\hat X\left(\cdot + s t' + \sigma \varphi_0^m(t')\right) - \hat X(\cdot + s t') - \tfrac{\d \hat X}{\d \xi}(\cdot + s t') \sigma  \varphi_0^m(t')\right) \d t' \\
&=& - \hat X\left(\cdot + s t + \sigma \varphi_0^m(t)\right) + \hat X(\cdot + s t) + \tfrac{\d \hat X}{\d \xi}(\cdot + s t) \sigma \varphi_0^m(t) \\
&& + \int_0^t \left((\partial_{t'} P_{st,st'}) + P_{st,st'} s \partial_x\right) \left(\hat X\left(\cdot + s t' + \sigma \varphi_0^m(t')\right) - \hat X(\cdot + s t') - \tfrac{\d \hat X}{\d \xi}(\cdot + s t') \sigma \varphi_0^m(t')\right) \d t' \\
&=& - \hat X\left(\cdot + s t + \sigma \varphi_0^m(t)\right) + \hat X(\cdot + s t) + \tfrac{\d \hat X}{\d \xi}(\cdot + s t) \sigma \varphi_0^m(t) \\
&& - \int_0^t P_{st,st'} \left(\LL_{s t'} - s \partial_x\right) \left(\hat X\left(\cdot + s t' + \sigma \varphi_0^m(t')\right) - \hat X(\cdot + s t') - \tfrac{\d \hat X}{\d \xi}(\cdot + s t') \sigma \varphi_0^m(t')\right) \d t',
\end{eqnarray*}
$\PP$-almost surely, and therefore by boundedness of
\[
\left.\left(\LL_{st} - s \partial_x\right)\right|_{H^3(\R) \eoperp H^2(\R)} \colon \cU := H^3(\R) \eoperp H^2(\R) \to \cV
\]
(cf.~\eqref{lin_op}) and employing Proposition~\ref{prop:family}, we have
\begin{eqnarray*}
\lefteqn{\sigma \vertii{\int_0^t P_{st,st'} S_3^m(t',\cdot) \, \d t'}_\cV} \\
&\stackrel{\eqref{bound_pstst}}{\le}& \vertii{\hat X\left(\cdot + s t + \sigma \varphi_0^m(t)\right) - \hat X(\cdot + s t) - \tfrac{\d \hat X}{\d \xi}(\cdot + s t) \sigma \varphi_0^m(t)}_\cV \\
&& + C \int_0^t e^{\beta (t-t')} \vertii{\hat X\left(\cdot + s t' + \sigma \varphi_0^m(t')\right) - \hat X(\cdot + s t') - \tfrac{\d \hat X}{\d \xi}(\cdot + s t') \sigma \varphi_0^m(t')}_{\cU} \d t',
\end{eqnarray*}
$\PP$-almost surely, where $C < \infty$ and $\beta \stackrel{\eqref{def_beta}}{=} \vertii{f'(\hat u) - f'(0)}_{W^{1,\infty}(\R)} - \min\left\{\nu, - f'(0), \eps\gamma\right\}$. Now, we note that
\[
\vertii{\partial_x^j \left(\hat u\left(\cdot+st+\sigma \varphi_0^m(t)\right) - \hat u(\cdot+st) - \tfrac{\d\hat u}{\d\xi}(\cdot+st) \, \sigma \varphi_0^m(t)\right)}_{L^2(\R)} \le \frac 1 2 \vertii{\tfrac{\d^{j+2} \hat u}{\d\xi^{j+2}}}_{L^2(\R)} (\sigma \varphi_0^m(t))^2
\]
for $j \in \{0,1,2,3\}$ and in the same way
\[
\vertii{\partial_x^j \left(\hat v\left(\cdot+st+\sigma\varphi_0^m(t)\right) - \hat v(\cdot+st) - \tfrac{\d\hat v}{\d\xi}(\cdot+st) \, \sigma \varphi_0^m(t)\right)}_{L^2(\R)} \le \frac 1 2 \vertii{\tfrac{\d^{j+2} \hat v}{\d\xi^{j+2}}}_{L^2(\R)} (\sigma \varphi_0^m(t))^2
\]
for $j \in \{0,1,2\}$. As a result, we obtain
\begin{eqnarray}\nonumber
\vertii{\int_0^t P_{st,st'} S_3^m(t',\cdot) \, \d t'}_\cV &\le& \sigma^{-1} \, C_3 \left((\sigma \varphi_0^m(t))^2 + \int_0^t e^{\beta (t-t')} \left(\sigma \varphi_0^m(t')\right)^2 \d t'\right) \\
&\le& \sigma^{-1} \, C_3 \, \frac{e^{\beta t} + \beta - 1}{\beta} \max_{t' \in [0,t]} \left(\sigma \varphi_0^m(t')\right)^2, \label{est_s3m}
\end{eqnarray}
$\PP$-almost surely, where $C_3 < \infty$ is independent of $\sigma$.

\medskip

We collect \eqref{est_s1m}, \eqref{est_s2m}, and \eqref{est_s3m}, and obtain because of \eqref{bound_evol_fam} of Proposition~\ref{prop:family} and \eqref{duhamel_xm_x0m},
\begin{eqnarray}\nonumber
\lefteqn{\vertii{S^m(t,\cdot)}_\cV} \\
&\le& \sigma^{-1} \, C_1 \int_0^t e^{\beta (t-t')} \left(\vertii{\tilde u_0^m(t',\cdot)}_{H^1(\R)}^2 + \vertii{\tilde u_0^m(t',\cdot)}_{H^1(\R)}^3\right) \d t' \nonumber\\
&&+ \sigma^{-1} \, C_2 \int_0^t e^{\beta (t-t')} \sigma \verti{\varphi_0^m(t')} \vertii{\tilde u_0^m(t',\cdot)}_{H^1(\R)} \d t' \nonumber \\
&&+ \sigma^{-1} \, C_3 \, \frac{e^{\beta t} + \beta - 1}{\beta} \max_{t' \in [0,t]} \left(\sigma \varphi_0^m(t')\right)^2 \nonumber\\
&\le& \sigma^{-1} \, C_4 \, \max_{t' \in [0,t]} \left(\left(\vertii{\tilde u_0^m(t',\cdot)}_{H^1(\R)} + \sigma \verti{\varphi_0^m(t')}\right)^2 + \vertii{\tilde u_0^m(t',\cdot)}_{H^1(\R)}^3\right), \label{est_sm}
\end{eqnarray}
$\PP$-almost surely, where $C_4 < \infty$ is independent of $\sigma$. Now, because of \eqref{def_tu0m} we have
\begin{eqnarray*}
\vertii{\tilde u_0^m(t,\cdot)}_{H^1(\R)} &\le& \vertii{u(t,\cdot)}_{H^1(\R)} + \vertii{\hat u(\cdot+st) - \hat u\left(\cdot + st + \sigma \varphi_0^m(t)\right)}_{H^1(\R)} \\
&\le& \vertii{u(t,\cdot)}_{H^1(\R)} + \vertii{\tfrac{\d\hat u}{\d\xi}}_{L^\infty(\R)} \sigma \verti{\varphi_0^m(t)},
\end{eqnarray*}
which in combination with \eqref{stopping_times} and \eqref{est_sm} gives \eqref{est_remainder_ms}.
\end{proof}

\begin{proof}[Proof of Theorem~\ref{th:multi}~\eqref{item:multi_lim}]
We fix an element $\omega \in \Omega$. If for $\sigma \in (0,1]$ we have $t := \tau_{q,\sigma}(\omega) < T$, then
\begin{eqnarray*}
\sigma^{1-q} &\stackrel{\eqref{stopping_time_X}}{=}& \vertii{X(t,\cdot)}_\cV \stackrel{\eqref{def_txxhx}}{=} \vertii{\tilde X(t,\cdot) - \hat X(\cdot+st)}_\cV \\
&\stackrel{\eqref{decomp_x_sx_s}}{\le}& \sigma \vertii{S^m(t,\cdot)}_\cV + \vertii{\hat X\left(\cdot+s t + \sigma \varphi_0^m(t)\right) - \hat X(\cdot + s t) + \sigma X_0^m(t,\cdot)}_\cV \\
&\stackrel{\eqref{est_remainder_ms}}{\le}& C \sigma^{2-2q} + \vertii{\hat X\left(\cdot+s t + \sigma \varphi_0^m(t)\right) - \hat X(\cdot + s t) + \sigma X_0^m(t,\cdot)}_\cV,
\end{eqnarray*}
where $C < \infty$ is independent of $\sigma \in (0,1]$. This gives with help of Markov's inequality
\begin{eqnarray*}
\lefteqn{\PP\left[\tau_{q,\sigma} < T\right]} \\
&\le& \PP\left[\max_{t \in [0,T]} \vertii{X(t,\cdot)}_\cV \ge \sigma^{1-q}\right] \\
&\le& \PP\left[\max_{t \in [0,T]} \vertii{\hat X\left(\cdot+s t + \sigma \varphi_0^m(t)\right) - \hat X(\cdot + s t) + \sigma X_0^m(t_0,\cdot)}_\cV \ge \sigma^{1-q} \left(1 - C \sigma^{1-q}\right) \right] \\
&\le& \frac{2 \sigma^{2q-2}}{\left(1-C \sigma^{1-q}\right)^2} \, \E\left[\max_{t \in [0,T]} \vertii{\hat X\left(\cdot+s t + \sigma \varphi_0^m(t)\right) - \hat X(\cdot + s t)}_\cV^2\right] \\
&& + \frac{2 \sigma^{2q}}{\left(1-C \sigma^{1-q}\right)^2} \, \E\left[\max_{t \in [0,T]} \vertii{X_0^m(t,\cdot)}_\cV^2\right] \\
&\le& \frac{\tilde C \sigma^{2q}}{\left(1-C \sigma^{1-q}\right)^2} \left(\E\left[\max_{t \in [0,T]} \verti{\varphi_0^m(t)}^2\right] + \E\left[\max_{t \in [0,T]} \vertii{X_0^m(t,\cdot)}_\cV^2\right]\right),
\end{eqnarray*}
where $\tilde C < \infty$ is independent of $\sigma \in (0,1] \cap \left(0,C^{-\frac{1}{1-q}}\right)$. Hence, in order to conclude that $\PP\left[\tau_{q,\sigma} < T\right] \le C \sigma^{2q} \to 0$ as $\sigma \searrow 0$ for some $C < \infty$, it suffices to prove that
\[
\E\left[\max_{t \in [0,T]} \verti{\varphi_0^m(t)}^2\right] < \infty \quad \mbox{and} \quad \E\left[\max_{t \in [0,T]} \vertii{X_0^m(t,\cdot)}_\cV^2\right] < \infty.
\]
This follows from the representations \eqref{mild_phi_x_0_m} and the Burkholder-Davis-Gundy inequality~\cite[Theorem~3.28]{KaratzasShreve1991} or \cite[Lemma~7.7]{DaPratoZabczyk} to estimate the martingale. Indeed, we have
\begin{eqnarray*}
\lefteqn{\E\left[\max_{t \in [0,T]} \verti{\varphi_0^m(t)}^2\right]} \\
&\le& 2 \verti{\inner{\Pi^{\#,0} X^{(0)}_0}{\tfrac{\d \hat X}{\d\xi}}{H}}^2 \\
&& + 2 \E \max_{t \in [0,T]} \verti{\int_0^t \left(1-e^{-m(t-t')}\right) \inner{\Pi^0_{s t'} \left(1,0\right)^\trans \d W(t',\cdot)}{\tfrac{\d\hat X}{\d\xi}(\cdot+st')}{H}}^2 \\
&\stackrel{\eqref{pi_st_phi}}{\le}& 2 \vertii{\Pi^{\#,0}}_{L(H)}^2 \vertii{X^{(0)}_0}_H^2 \vertii{\tfrac{\d\hat X}{\d \xi}}_H^2 + 2 C_1 T \eps Z \vertii{\Pi^{\#,0}}_{L(H)}^2 \vertii{\tfrac{\d\hat X}{\d\xi}}_H^2 \vertii{\sqrt Q}_{L_2(L^2(\R))}^2 < \infty,
\end{eqnarray*}
as well as
\begin{eqnarray*}
\lefteqn{\E\left[\max_{t \in [0,T]} \vertii{X_0^m(t,\cdot)}_\cV^2\right]
} \\
&\le& 3 \max_{t \in [0,T]} \vertii{P_{st,0} X^{(0)}_0}_\cV^2 + 3 \vertii{\tfrac{\d\hat X}{\d\xi}}_\cV^2 \E \max_{t \in [0,T]} \verti{\varphi_0^m(t)}^2 \\
&& + 3 \E \max_{t \in [0,T]} \vertii{\int_0^t P_{st,st'} (1,0)^\trans \d W(t',\cdot)}_\cV^2 \\
&\stackrel{\eqref{bound_evol_fam}}{\le}& 3 e^{2 \beta T} \vertii{X^{(0)}_0}_\cV^2 + 3 \vertii{\tfrac{\d\hat X}{\d\xi}}_\cV^2 \E \max_{t \in [0,T]} \verti{\varphi_0^m(t)}^2 + 3 C_2 \frac{e^{2 \beta T} - 1}{2 \beta} \eps Z \vertii{\sqrt Q}_{L_2(L^2(\R))}^2 < \infty,
\end{eqnarray*}
where $C_1, C_2 < \infty$ are independent of $\sigma$ and Proposition~\ref{prop:family} has been employed.

\medskip

In the same way, we obtain
\[
\PP\left[\tau_{q,\sigma}^m < T\right] \stackrel{\eqref{stopping_time_phi}}{\le} \PP\left[\max_{t \in [0,T]} \verti{\varphi_0^m(t)} \ge \sigma^{-q}\right] \le \sigma^{2 q} \, \E\left[\max_{t \in [0,T]} \verti{\varphi_0^m(t)}^2\right] \to 0 \quad \mbox{as} \quad \sigma \searrow 0. \qedhere
\]
\end{proof}

\subsection{Immediate relaxation\label{sec:immediate}}
Here, we give the proofs of Theorem~\ref{th:immediate} and Proposition~\ref{prop:moment}.

\begin{proof}[Proof of Theorem~\ref{th:immediate}~\eqref{item:immediate_xinf0_0}]
Utilizing Proposition~\ref{prop:frozen}~\eqref{item:semi_2} and \eqref{item:riesz} yields
\begin{eqnarray*}
\Pi^0_{st} X_0^\infty(t,\cdot) &\stackrel{\eqref{x_inf_0}}{=}& \Pi^0_{st} P_{st,0} \Pi^\# X^{(0)}_0 + \int_0^t \Pi^0_{st} P_{st,st'} \Pi_{s t'} \left(1,0\right)^\trans \d W(t',\cdot) \\
&\stackrel{\eqref{pi_st_phi}}{=}& \cT_{st} \Pi^{\#,0} P^\#_{st} \Pi^\# X^{(0)}_0 + \int_0^t \cT_{st} \Pi^{\#,0} P^\#_{s(t-t')} \Pi^\# \cT_{-st'} \left(1,0\right)^\trans \d W(t',\cdot) \\
&\stackrel{\eqref{riesz_projection}}{=}& \cT_{st} P^\#_{st} \underbrace{\Pi^{\#,0} \Pi^\#}_{\stackrel{\eqref{riesz_projection}}{=} 0} X^{(0)}_0 + \int_0^t \cT_{st} P^\#_{s(t-t')} \underbrace{\Pi^{\#,0} \Pi^\#}_{\stackrel{\eqref{riesz_projection}}{=} 0} \cT_{-st'} \left(1,0\right)^\trans \d W(t',\cdot) = 0,
\end{eqnarray*}
$\PP$-almost surely, so that because of \eqref{riesz_projection} and \eqref{pi_st_phi} the claim follows.
\end{proof}

\begin{proof}[Proof of Theorem~\ref{th:immediate}~\eqref{item:immediate_lim}]
Using Theorem~\ref{th:multi}~\eqref{item:multi_mild}, we obtain for the difference of $\varphi_0^m$ and $\varphi_0^\infty$ the following form
\begin{eqnarray}
\left(\varphi_0^\infty - \varphi_0^m\right)(t) &\stackrel{\eqref{mild_phi_0_m}, \eqref{phi_inf_0}}{=}& e^{-mt} \inner{\Pi^{\#,0} X^{(0)}_0}{\tfrac{\d \hat X}{\d \xi}}{H} \nonumber \\
&& + \int_0^t e^{-m (t-t')} \inner{\Pi_{st'}^0 \left(1,0\right)^\trans \d W(t',\cdot)}{\tfrac{\d\hat X}{\d\xi}(\cdot+st')}{H}, \label{phiinf0_m0}
\end{eqnarray}
$\PP$-almost surely. On the other hand, we obtain with \eqref{riesz_projection}, \eqref{pi_st_phi}, \eqref{mild_x_0_m}, and \eqref{x_inf_0}
\[
\left(X_0^\infty - X_0^m\right)(t,\cdot) = - P_{st,0} \Pi^{\#,0} X^{(0)}_0 - \int_0^t P_{st,st'} \Pi^0_{st'} \left(1,0\right)^\trans \d W(t',\cdot) + \varphi_0^m(t) \tfrac{\d\hat X}{\d\xi}(\cdot+st)
\]
and with help of Proposition~\ref{prop:frozen}~\eqref{item:semi_2}, \eqref{item:point}, and \eqref{item:riesz} we conclude that
\begin{eqnarray*}
P_{st,0} \Pi^{\#,0} X^{(0)}_0 &\stackrel{\eqref{frozen_tw_0},  \eqref{riesz_projection}}{=}& \inner{\Pi^{\#,0} X^{(0)}_0}{\tfrac{\d\hat X}{\d\xi}}{H} \cT_{st} P_{st}^\# \tfrac{\d\hat X}{\d\xi} \\
&\stackrel{\eqref{translation}, \eqref{frozen_tw_0}}{=}& \inner{\Pi^{\#,0} X^{(0)}_0}{\tfrac{\d\hat X}{\d\xi}}{H} \tfrac{\d\hat X}{\d\xi}(\cdot+st), \\
P_{st,st'} \Pi^0_{st'} \left(1,0\right)^\trans \d W(t',\cdot) &\stackrel{\eqref{frozen_tw_0}, \eqref{riesz_projection}, \eqref{pi_st_phi}}{=}& \inner{\Pi^0_{st'} \left(1,0\right)^\trans \d W(t',\cdot-st')}{\tfrac{\d\hat X}{\d\xi}}{H} \cT_{st} P_{s(t-t')}^\# \tfrac{\d\hat X}{\d\xi} \\
&\stackrel{\eqref{translation}, \eqref{frozen_tw_0}}{=}& \inner{\Pi^0_{st'} \left(1,0\right)^\trans \d W(t',\cdot)}{\tfrac{\d\hat X}{\d\xi}(\cdot+st')}{H} \tfrac{\d\hat X}{\d\xi}(\cdot+st), 
\end{eqnarray*}
and thus
\begin{eqnarray}
\lefteqn{\left(X_0^\infty - X_0^m\right)(t,\cdot)} \nonumber \\
&=& \left(- \inner{\Pi^{\#,0} X^{(0)}_0}{\tfrac{\d\hat X}{\d\xi}}{H} - \int_0^t \inner{\Pi^0_{st'} \left(1,0\right)^\trans \d W(t',\cdot)}{\tfrac{\d\hat X}{\d\xi}(\cdot+st')}{H} + \varphi_0^m(t)\right) \tfrac{\d\hat X}{\d\xi}(\cdot+st) \nonumber \\
&\stackrel{\eqref{phi_inf_0}}{=}& - \left(\varphi_0^\infty(t) - \varphi_0^m(t)\right) \tfrac{\d\hat X}{\d\xi}(\cdot+st), \quad \mbox{$\PP$-almost surely}. \label{x0inf_x0m}
\end{eqnarray}
Hence, once \eqref{limit_phi_minf} is established, \eqref{limit_x_minf} is immediate from \eqref{x0inf_x0m}.

\medskip

In order to prove \eqref{limit_phi_minf}, notice that obviously
\[
\lim_{m \to \infty} \sup_{t \in [\delta,T]} e^{-mt} \verti{\inner{\Pi^{\#,0} X^{(0)}_0}{\tfrac{\d \hat X}{\d \xi}}{H}} = 0
\]
for any $\delta > 0$. For the remaining term we apply the simplification
\begin{eqnarray}
\lefteqn{\int_0^t e^{-m (t-t')} \inner{\Pi_{st'}^0 \left(1,0\right)^\trans \d W(t',\cdot)}{\tfrac{\d\hat X}{\d\xi}(\cdot+st')}{H}} \nonumber \\
&\stackrel{\eqref{translation}, \eqref{pi_st_phi}}{=}& \int_0^t e^{-m (t-t')} \inner{\Pi^{\#,0} \cT_{-st'} \left(1,0\right)^\trans \d W(t',\cdot)}{\tfrac{\d\hat X}{\d\xi}}{H} \nonumber \\
&\stackrel{\eqref{translation}}{=}& \inner{\Pi^{\#,0} \left(1,0\right)^\trans W(t,\cdot-st)}{\tfrac{\d\hat X}{\d\xi}}{H} \nonumber \\
&& - m \int_0^t e^{-m(t-t')} \inner{\Pi^{\#,0} \left(1,0\right)^\trans W(t',\cdot-st')}{\tfrac{\d\hat X}{\d\xi}}{H} \d t' \nonumber \\
&& + s \int_0^t e^{-m(t-t')} \inner{\Pi^{\#,0} \left(1,0\right)^\trans (\partial_x W)(t',\cdot-st')}{\tfrac{\d\hat X}{\d\xi}}{H} \d t' \label{phi_diff_int} \\
&=& e^{-mt} \inner{\Pi^{\#,0} \left(1,0\right)^\trans W(t,\cdot-st)}{\tfrac{\d\hat X}{\d\xi}}{H} \nonumber \\
&& + m \int_0^t e^{-m(t-t')} \inner{\Pi^{\#,0} \left(1,0\right)^\trans \left(W(t,\cdot-st)-W(t',\cdot-st')\right)}{\tfrac{\d\hat X}{\d\xi}}{H} \d t' \nonumber \\
&& + s \int_0^t e^{-m(t-t')} \inner{\Pi^{\#,0} \left(1,0\right)^\trans (\partial_x W)(t',\cdot-st')}{\tfrac{\d\hat X}{\d\xi}}{H} \d t', \quad \mbox{$\PP$-almost surely.} \nonumber
\end{eqnarray} 
Now, note that $[0,T] \owns t \mapsto W(t,\cdot) \in H^1(\R)$ is, $\PP$-almost surely, H\"older continuous with exponent $\alpha < \frac 1 2$, so that in particular
\begin{eqnarray*}
\lefteqn{\sup_{t,t' \in [0,T]} \frac{\verti{\inner{\Pi^{\#,0} \left(1,0\right)^\trans \left(W(t,\cdot-st)-W(t',\cdot-st)\right)}{\tfrac{\d\hat X}{\d\xi}}{H}}}{\verti{t-t'}^\alpha}} \\
&\stackrel{\eqref{h_norm}}{\le}& \vertii{\Pi^{\#,0}}_{L(H)} \vertii{\tfrac{\d\hat X}{\d\xi}}_H \sqrt{\eps Z} \sup_{t,t' \in [0,T]} \frac{\vertii{W(t,\cdot)-W(t',\cdot)}_{L^2(\R)}}{\verti{t-t'}^\alpha} < \infty, \quad \mbox{$\PP$-almost surely.}
\end{eqnarray*}
Furthermore,
\begin{eqnarray*}
\lefteqn{\verti{\inner{\Pi^{\#,0} \left(1,0\right)^\trans \left(W(t,\cdot-st)-W(t,\cdot-st')\right)}{\tfrac{\d\hat X}{\d\xi}}{H}}} \\
&=& \verti{s \int_{t'}^t \inner{\Pi^{\#,0} \left(1,0\right)^\trans (\partial_x W)(t,\cdot-st'')}{\tfrac{\d\hat X}{\d\xi}}{H} \d t''} \\
&\stackrel{\eqref{h_norm}}{\le}& \verti{s} \vertii{\Pi^{\#,0}}_{L(H)} \sqrt{\eps Z} \vertii{(\partial_x W)(t,\cdot)}_{L^2(\R)} \vertii{\tfrac{\d\hat X}{\d\xi}}_H \verti{t-t'},
\end{eqnarray*}
so that we may conclude
\[
M := \sup_{t,t' \in [0,T]} \frac{\verti{\inner{\Pi^{\#,0} \left(1,0\right)^\trans \left(W(t,\cdot-st)-W(t',\cdot-st')\right)}{\tfrac{\d\hat X}{\d\xi}}{H}}}{\verti{t-t'}^\alpha} < \infty,
\]
$\PP$-almost surely, i.e.,
\begin{align*}
&\verti{m \int_0^t e^{-m(t-t')} \inner{\Pi^{\#,0} \left(1,0\right)^\trans \left(W(t,\cdot-st)-W(t',\cdot-st')\right)}{\tfrac{\d\hat X}{\d\xi}}{H} \d t'} \\
& \quad \le m^{-\alpha} M \int_0^{mT} e^{- \tau} \tau^\alpha \, \d \tau \le m^{-\alpha} M \Gamma(1+\alpha) \to 0 \quad \mbox{as $m \to \infty$}, \quad \mbox{$\PP$-almost surely.}
\end{align*}
By continuity, furthermore
\begin{eqnarray*}
\lefteqn{\sup_{t \in [\delta,T]} e^{-mt} \verti{\inner{\Pi_{st}^0 \left(1,0\right)^\trans W(t,\cdot)}{\tfrac{\d\hat X}{\d\xi}(\cdot+st)}{H}}} \\
&\stackrel{\eqref{pi_st_phi}}{\le}& e^{- m \delta}  \vertii{\Pi^{\#,0}}_{L(H)} \sqrt{\eps Z} \sup_{t \in [\delta,T]} \vertii{W(t,\cdot)}_{L^2(\R)} \vertii{\tfrac{\d\hat X}{\d\xi}}_H \to 0 \quad \mbox{as $m \to \infty$, $\PP$-almost surely},
\end{eqnarray*}
and
\begin{eqnarray*}
\lefteqn{\sup_{t \in [\delta,T]} \verti{s \int_0^t e^{-m(t-t')} \inner{\Pi^{\#,0} \left(1,0\right)^\trans (\partial_x W)(t',\cdot-st')}{\tfrac{\d\hat X}{\d\xi}}{H} \d t'}} \\
&\le& m^{-1} \verti{s} \vertii{\Pi^{\#,0}}_{L(H)} \sqrt{\eps Z} \sup_{t \in [0,T]} \vertii{(\partial_x W)(t,\cdot)}_{L^2(\R)} \vertii{\tfrac{\d\hat X}{\d\xi}}_H \to 0 \quad \mbox{as $m \to \infty$, $\PP$-almost surely.}
\end{eqnarray*}
As a result, we infer that
\begin{equation}\label{point_diff_phi}
\sup_{t \in [\delta,T]} \verti{\left(\varphi_0^\infty-\varphi_0^m\right)(t)} \to 0 \quad \mbox{as $m \to \infty$}, \quad \mbox{$\PP$-almost surely.}
\end{equation}
\medskip

From \eqref{phiinf0_m0} and \eqref{phi_diff_int} we further deduce that $\sup_{t \in [0,T]} \verti{\left(\varphi_0^\infty - \varphi_0^m\right)(t)} \le g$, where
\begin{eqnarray*}
g &:=& \vertii{\Pi^{\#,0}}_{L(H)} \vertii{X^{(0)}_0}_H \vertii{\tfrac{\d\hat X}{\d\xi}}_H + \vertii{\Pi^{\#,0}}_{L(H)} \sqrt{\eps Z} \sup_{t \in [0,T]} \vertii{W(t,\cdot)}_{L^2(\R)} \vertii{\tfrac{\d\hat X}{\d\xi}}_H \\
&& + \verti{s} T \vertii{\Pi^{\#,0}}_{L(H)} \sqrt{\eps Z} \sup_{t \in [0,T]} \vertii{(\partial_x W)(t,\cdot)}_{L^2(\R)} \vertii{\tfrac{\d\hat X}{\d\xi}}_H.
\end{eqnarray*}
With help of the Burkholder-Davis-Gundy inequality~\cite[Lemma~7.7]{DaPratoZabczyk}, we infer that
\[
\E \sup_{t \in [0,T]} \vertii{W(t,\cdot)}_{H^1(\R)} \le C \sqrt T \vertii{\sqrt Q}_{L_2(L^2(\R);H^1(\R))} < \infty,
\]
i.e., $\E\left[g\right] < \infty$. Hence, with \eqref{point_diff_phi} and dominated convergence, we conclude that \eqref{limit_phi_minf} holds true.
\end{proof}

\begin{proof}[Proof of Theorem~\ref{th:immediate}~\eqref{item:immediate_multi}]
We introduce the auxiliary stopping time
\[
\tau_{q,\sigma,c}^\infty := \inf \left(\left\{ t \in [0,T] \colon \verti{\varphi_0^\infty(t)} \ge \sigma^{-q}-c \right\} \cup \{T\}\right),
\]
where $c \in (0,\sigma^{-q}]$, and start by proving that we can use the multiscale decomposition \eqref{decomp_x_sx_s} of Theorem~\ref{th:multi}~\eqref{item:multi_decomp} on $\left\{\min\left\{\tau_{q,\sigma}, \tau_{q,\sigma,c}^\infty\right\} = T\right\} \cap E^p$, $\PP$-almost surely, where $E^p \in \cF$ is an event with $\PP\left[E^p\right] \ge 1-p$, and where $p \in (0,1]$ is arbitrary. On $\left\{\min\left\{\tau_{q,\sigma}, \tau_{q,\sigma,c}^\infty\right\} = T\right\}$ we have
\[
\varphi_0^\infty(0) + \frac c 2 \stackrel{\eqref{phi_inf_0}}{=} \verti{\inner{\Pi^{\#,0} X^{(0)}_0}{\tfrac{\d \hat X}{\d \xi}}{H}} + \frac c 2 < \sigma^{-q}
\]
Now, note that for any $m>0$ we have
\begin{eqnarray*}
\varphi_0^m(t) &\stackrel{\eqref{mild_phi_0_m}}{=}& \left(1-e^{-mt}\right) \inner{\Pi^{\#,0} X^{(0)}_0}{\tfrac{\d\hat X}{\d\xi}}{H} \\
&& + \int_0^t \left(1-e^{-m (t-t')}\right) \inner{\Pi_{st'}^0 \left(1,0\right)^\trans \d W(t',\cdot)}{\tfrac{\d \hat X}{\d\xi}(\cdot+st')}{H} \\
&\stackrel{\eqref{translation}, \eqref{pi_st_phi}}{=}& \left(1-e^{-mt}\right) \inner{\Pi^{\#,0} X^{(0)}_0}{\tfrac{\d\hat X}{\d\xi}}{H} \\
&& + \int_0^t \left(1-e^{-m (t-t')}\right) \inner{\Pi^{\#,0} \cT_{-st'} \left(1,0\right)^\trans \d W(t',\cdot)}{\tfrac{\d \hat X}{\d\xi}}{H} \\
&\stackrel{\eqref{translation}}{=}& \left(1-e^{-mt}\right) \inner{\Pi^{\#,0} X^{(0)}_0}{\tfrac{\d\hat X}{\d\xi}}{H} \\
&& - m \int_0^t e^{-m (t-t')} \inner{\Pi^{\#,0} \left(1,0\right)^\trans W(t',\cdot-st')}{\tfrac{\d \hat X}{\d\xi}}{H} \d t' \\
&& + s \int_0^t \left(1-e^{-m (t-t')}\right) \inner{\Pi^{\#,0} \left(1,0\right)^\trans (\partial_x W)(t',\cdot-st')}{\tfrac{\d \hat X}{\d\xi}}{H} \d t',
\end{eqnarray*}
$\PP$-almost surely, where we have integrated by parts in the last step. Hence, for $\delta > 0$ it holds $\sup_{t \in [0,\delta]} \verti{\varphi_0^m(t)} \le g(\delta)$, $\PP$-almost surely, where
\begin{eqnarray*}
g(t) &:=& \verti{\inner{\Pi^{\#,0} X^{(0)}_0}{\tfrac{\d \hat X}{\d \xi}}{H}} + \vertii{\Pi^{\#,0}}_{L(H)} \sup_{t' \in [0,t]} \vertii{W(t',\cdot)}_H \vertii{\tfrac{\d\hat X}{\d\xi}}_H \\
&& + \verti{s} t \vertii{\Pi^{\#,0}}_{L(H)} \sqrt{\eps Z} \sup_{t' \in [0,t]} \vertii{(\partial_x W)(t',\cdot)}_{L^2(\R)} \vertii{\tfrac{\d\hat X}{\d\xi}}_H
\end{eqnarray*}
is $m$-independent. It is obvious that $g$ is, $\PP$-almost surely, non-decreasing and by the Burkholder-Davis-Gundy inequality~\cite[Lemma~7.7]{DaPratoZabczyk}, as in the proof of Theorem~\ref{th:immediate}~\eqref{item:immediate_lim}, we see that
\[
\E\left[\sup_{t \in [0,\delta]} \verti{\varphi_0^m(t)}\right] \le \E\left[g(\delta)\right] \to \verti{\inner{\Pi^{\#,0} X^{(0)}_0}{\tfrac{\d \hat X}{\d \xi}}{H}} \quad \mbox{as $\delta \searrow 0$.}
\]
Since
\[
g(\delta) \ge g(0) = \verti{\inner{\Pi^{\#,0} X^{(0)}_0}{\tfrac{\d \hat X}{\d \xi}}{H}},
\]
we conclude that
\[
\PP\left[\sup_{t \in [0,\delta]} \verti{\varphi_0^m(t)} \ge \verti{\inner{\Pi^{\#,0} X^{(0)}_0}{\tfrac{\d \hat X}{\d \xi}}{H}} + \frac c 2\right] \to 0 \quad \mbox{as $\delta \searrow 0$}
\]
and hence, for any $p \in (0,1]$ there exists $\delta \in [0,T]$ small enough such that
\[
\sup_{t \in [0,\delta]} \verti{\varphi_0^m(t)} \le \verti{\inner{\Pi^{\#,0} X^{(0)}_0}{\tfrac{\d \hat X}{\d \xi}}{H}} + \frac c 2 < \sigma^{-q}
\]
on an event $E_1^p \in \cF$ with $\PP\left[E_1^p\right] \ge 1 - \frac p 2$. Using the convergence \eqref{limit_phi_minf}, we infer that for any $p \in (0,1]$ we find $m \in (0,\infty)$ sufficiently large with
\[
\sup_{t \in [\delta,T]} \verti{\varphi_0^m(t)} \le \sup_{t \in [\delta,T]} \verti{\varphi_0^\infty(t)} + c \le \sigma^{-q} 
\]
on an event $E_2^p \in \cF$ with $\PP\left[E_2^p\right] \ge 1 - \frac p 2$. Define $E^p := E_1^p \cap E_2^p$, then
\[
\PP\left[E^p\right] = 1 - \PP\left[\Omega \setminus E^p\right] \ge 1 - \PP\left[\Omega \setminus E_1^p\right] - \PP\left[\Omega \setminus E_2^p\right] = - 1 + \PP\left[E_1^p\right] + \PP\left[E_2^p\right] \ge 1 - p.
\]
In total, on $\left\{\min\left\{\tau_{q,\sigma}, \tau_{q,\sigma,c}^\infty\right\} = T \right\} \cap E^p$ we can apply the multiscale decomposition \eqref{decomp_x_sx_s} of Theorem~\ref{th:multi}~\eqref{item:multi_decomp} in order to obtain
\begin{eqnarray*}
\lefteqn{\sigma \vertii{S^\infty(t,\cdot)}_\cV} \\
&\stackrel{\eqref{multi_imm_relax}}{=}& \vertii{\tilde X(t,\cdot) - \hat X (\cdot + st + \sigma \varphi_0(t)) - \sigma X_0^\infty(t,\cdot)}_\cV \\
&\stackrel{\eqref{decomp_x_sx_s}}{\le}& \sigma \vertii{S^m(t,\cdot)}_V + \sigma \vertii{X_0^\infty(t,\cdot) - X^m_0(t,\cdot)}_\cV \\ 
&& +\vertii{\hat X \left(\cdot + st + \sigma \varphi_0^\infty(t)\right) - \hat X \left(\cdot + st + \sigma \varphi^m_0(t)\right)}_\cV \\
&\stackrel{\eqref{est_remainder_ms}}{\le}& C \sigma^{2-2q} \left(1 + \sigma^{1-q}\right) + \sigma \vertii{X_0^{\infty}(t,\cdot) - X^m_0(t)}_\cV \\
&& + \sqrt{\sigma \verti{\varphi_0^\infty(t) - \varphi_0^m(t)}} \sqrt Z \sqrt{2 \eps \vertii{\tfrac{\d \hat u}{\d\xi}}_{W^{1,\infty}(\R)} \vertii{\hat u}_{W^{1,1}(\R)} + 2 \vertii{\tfrac{\d \hat v}{\d\xi}}_{W^{1,\infty}(\R)} \vertii{\hat v}_{W^{1,1}(\R)}} \\
&\to& C \sigma^{2-2q} \left(1 + \sigma^{1-q}\right) \quad \mbox{as $m \to \infty$}, \quad \mbox{$\PP$-almost surely,}
\end{eqnarray*}
i.e., \eqref{est_s_inf} holds true on $\left\{\min\left\{\tau_{q,\sigma}, \tau_{q,\sigma,c}^\infty\right\} = T \right\} \cap E^p$, where $C < \infty$ is independent of $\sigma$, $p$, and $c$. Setting $E := \bigcup_{p \in (0,1]} E^p$, it follows $\PP\left[E\right] = 1$ and we infer that the point-wise estimate \eqref{est_s_inf} holds true on $\left\{\min\left\{\tau_{q,\sigma}, \tau_{q,\sigma,c}^\infty\right\} = T \right\}$, too. Since
\[
\left\{\min\left\{\tau_{q,\sigma}, \tau_{q,\sigma}^\infty\right\} = T \right\} = \bigcup_{c \in (0,\sigma^{-q}]} \left\{\min\left\{\tau_{q,\sigma}, \tau_{q,\sigma,c}^\infty\right\} = T \right\},
\]
we infer that \eqref{est_s_inf} is also valid on $\left\{\min\left\{\tau_{q,\sigma}, \tau_{q,\sigma}^\infty\right\} = T\right\}$.
\end{proof}

\begin{proof}[Proof of Theorem~\ref{th:immediate}~\eqref{item:immediate_stop}]
Because of Theorem~\ref{th:multi}~\eqref{item:multi_lim}, it suffices to prove $\PP\left[\tau_{q,\sigma}^\infty < T\right] \le C \sigma^{2q} \to 0$ as $\sigma \searrow 0$ for some $C < \infty$. Indeed, by Markov's inequality, we obtain
\[
\PP\left[\tau_{q,\sigma}^\infty < T\right] \stackrel{\eqref{t_q_sig_inf}}{\le} \PP\left[\max_{t \in [0,T]} \verti{\varphi_0^\infty(t)} \ge \sigma^{-q}\right] \le \sigma^{2 q} \, \E\left[\max_{t \in [0,T]} \verti{\varphi_0^\infty(t)}^2\right].
\]
Now note that $\E\left[\max_{t \in [0,T]} \verti{\varphi_0^\infty(t)}^2\right] < \infty$ because of the representation \eqref{phi_inf_0}, where the martingale can be estimated using the Burkholder-Davis-Gundy inequality~\cite[Theorem~3.28]{KaratzasShreve1991} or \cite[Lemma~7.7]{DaPratoZabczyk}.
\end{proof}

\begin{proof}[Proof of Theorem~\ref{th:immediate}~\eqref{item:immediate_min}]
We use
\begin{eqnarray*}
\lefteqn{\left. \partial_\varphi \vertii{\Pi_{st}^0 \left(\tilde X (t,\cdot) - \hat X(\cdot + st + \sigma \varphi)\right)}_H^{2} \right|_{\varphi = \varphi_0^\infty(t)}} \\
&=& - 2 \sigma \inner{\Pi^0_{st} \left(\tilde X(t,\cdot) - \hat X\left(\cdot + st + \sigma \varphi_0^\infty(t)\right)\right)}{\tfrac{\d \hat X}{\d\xi}\left(\cdot + s t + \sigma \varphi_0^\infty(t)\right)}{H} \\
&\stackrel{\eqref{multi_imm_relax}}{=}& - 2 \sigma^2 \inner{\Pi^0_{st} \left(X_0^\infty(t,\cdot) + S^\infty(t,\cdot)\right)}{\tfrac{\d\hat X}{\d \xi}(\cdot + st + \sigma \varphi_0^\infty(t))}{H} \\
&=& - 2 \sigma^2 \inner{\Pi^0_{st} S^\infty(t,\cdot)}{\tfrac{\d\hat X}{\d \xi}(\cdot + st + \sigma \varphi_0^\infty(t))}{H}, \quad \mbox{$\PP$-almost surely},
\end{eqnarray*}
where we have used Theorem~\ref{th:immediate}~\eqref{item:immediate_xinf0_0} in the last step. Hence,
\begin{eqnarray*}
\verti{\left. \partial_\varphi \vertii{\Pi_{st+\varphi}^0 \left(\tilde X (t,\cdot) - \hat X(\cdot + st + \sigma \varphi)\right)}_H^{2} \right|_{\varphi = \varphi_0^\infty(t)}} &\stackrel{\eqref{pi_st_phi}}{\le}& 2 \sigma^2 \vertii{\Pi^{\#,0}}_{L(H)} \vertii{\tfrac{\d\hat X} {\d\xi}}_H \vertii{S^\infty(t,\cdot)}_H \\
&\stackrel{\eqref{est_s_inf}}{\le}& C \, \sigma^{3-2q} \left(1 + \sigma^{1-q}\right),
\end{eqnarray*}
$\PP$-almost surely, where $C < \infty$ is independent of $\sigma$. This proves \eqref{min_0_inf_1}.

\medskip

For the second derivative, we obtain
\begin{eqnarray*}
\lefteqn{\left. \partial_\varphi^2 \vertii{\Pi_{st}^0 \left(\tilde X (t,\cdot) - \hat X(\cdot + st + \sigma \varphi)\right)}_H^2 \right|_{\varphi = \varphi_0^\infty(t)}} \\
&=& - 2 \sigma \left. \partial_\varphi \inner{\Pi_{st}^0 \left(\tilde X(t,\cdot) - \hat X\left(\cdot+st+\sigma\varphi\right)\right)}{\tfrac{\d \hat X}{\d\xi}\left(\cdot+st+\sigma\varphi\right)}{H} \right|_{\varphi = \varphi_0^\infty(t)} \\
&=& 2 \sigma^2 \inner{\Pi_{st}^0 \tfrac{\d\hat X}{\d\xi}\left(\cdot+st+\sigma\varphi_0^\infty(t)\right)}{\tfrac{\d \hat X}{\d\xi}\left(\cdot+st+\sigma\varphi_0^\infty(t)\right)}{H} \\
&& - 2 \sigma^2 \inner{\Pi_{st}^0 \left(\tilde X(t,\cdot) - \hat X\left(\cdot+st+\sigma\varphi_0^\infty(t)\right)\right)}{\tfrac{\d^2 \hat X}{\d\xi^2}\left(\cdot+st+\sigma\varphi_0^\infty(t)\right)}{H} \\
&\stackrel{\eqref{multi_imm_relax}}{=}& 2 \sigma^2 \inner{\Pi_{st}^0 \tfrac{\d\hat X}{\d\xi}\left(\cdot+st+\sigma\varphi_0^\infty(t)\right)}{\tfrac{\d \hat X}{\d\xi}\left(\cdot+st+\sigma\varphi_0^\infty(t)\right)}{H} \\
&& - 2 \sigma^2 \inner{\Pi^0_{st} \left(X_0^\infty(t,\cdot) + S^\infty(t,\cdot)\right)}{\tfrac{\d^2\hat X}{\d \xi^2}(\cdot + st + \sigma \varphi_0^\infty(t))}{H} \\
&\stackrel{\eqref{pi_st_phi}}{=}& 2 \sigma^2 \inner{\Pi^{\#,0} \tfrac{\d\hat X}{\d\xi}\left(\cdot+\sigma\varphi_0^\infty(t)\right)}{\tfrac{\d \hat X}{\d\xi}\left(\cdot+\sigma\varphi_0^\infty(t)\right)}{H} \\
&& - 2 \sigma^2 \inner{\Pi^0_{st} S^\infty(t,\cdot)}{\tfrac{\d^2\hat X}{\d \xi^2}(\cdot + st + \sigma \varphi_0^\infty(t))}{H}, \quad \mbox{$\PP$-almost surely,}
\end{eqnarray*}
where Theorem~\ref{th:immediate}~\eqref{item:immediate_xinf0_0} was used in the last step once more. Further utilizing
\begin{eqnarray*}
\lefteqn{2 \sigma^2 \inner{\Pi^{\#,0} \tfrac{\d\hat X}{\d\xi}\left(\cdot+\sigma\varphi_0^\infty(t)\right)}{\tfrac{\d \hat X}{\d\xi}\left(\cdot+\sigma\varphi_0^\infty(t)\right)}{H}} \\
&=& 2 \sigma^2 \vertii{\tfrac{\d\hat X}{\d\xi}}_H^2 + 2 \sigma^3 \int_0^{\varphi_0^\infty(t)} \inner{\Pi^{\#,0} \tfrac{\d^2\hat X}{\d\xi^2}\left(\cdot+\sigma\varphi\right)}{\tfrac{\d \hat X}{\d\xi}}{H} \d\varphi \\
&& + 2 \sigma^3 \int_0^{\varphi_0^\infty(t)} \inner{\tfrac{\d \hat X}{\d\xi}}{\tfrac{\d^2 \hat X}{\d\xi^2}\left(\cdot+\sigma\varphi\right)}{H} \d\varphi \\
&& + 2 \sigma^4 \int_0^{\varphi_0^\infty(t)} \int_0^{\varphi_0^\infty(t)} \inner{\Pi^{\#,0} \tfrac{\d^2\hat X}{\d^2\xi}\left(\cdot+\sigma\varphi\right)}{\tfrac{\d^2 \hat X}{\d\xi^2}\left(\cdot+\sigma\varphi'\right)}{H} \d\varphi \, \d\varphi',
\end{eqnarray*}
we can estimate
\begin{align*}
\verti{\int_0^{\varphi_0^\infty(t)} \inner{\Pi^{\#,0} \tfrac{\d^2\hat X}{\d\xi^2}\left(\cdot+\sigma\varphi\right)}{\tfrac{\d \hat X}{\d\xi}}{H} \d\varphi} &\le \sigma^{-q} \vertii{\Pi^{\#,0}}_{L(H)} \vertii{\tfrac{\d^2 \hat X}{\d\xi^2}}_H \vertii{\tfrac{\d\hat X}{\d\xi}}_H, \\
\verti{\int_0^{\varphi_0^\infty(t)} \inner{\tfrac{\d \hat X}{\d\xi}}{\tfrac{\d^2 \hat X}{\d\xi^2}\left(\cdot+\sigma\varphi\right)}{H} \d\varphi} &\le \sigma^{-q} \vertii{\tfrac{\d\hat X}{\d\xi}}_H \vertii{\tfrac{\d^2 \hat X}{\d\xi^2}}_H,
\end{align*}
and
\[
\verti{\int_0^{\varphi_0^\infty(t)} \int_0^{\varphi_0^\infty(t)} \inner{\Pi^{\#,0} \tfrac{\d^2\hat X}{\d^2\xi}\left(\cdot+\sigma\varphi\right)}{\tfrac{\d^2 \hat X}{\d\xi^2}\left(\cdot+\sigma\varphi'\right)}{H} \d\varphi \, \d\varphi'} \le \sigma^{-2q} \vertii{\Pi^{\#,0}}_{L(H)} \vertii{\tfrac{\d^2 \hat X}{\d\xi^2}}_H ^2,
\]
as well as
\begin{eqnarray*}
\verti{\inner{\Pi^0_{st} S^\infty(t,\cdot)}{\tfrac{\d\hat X}{\d \xi}(\cdot + st + \sigma \varphi_0^\infty(t))}{H}} &\stackrel{\eqref{pi_st_phi}}{\le}& \vertii{\Pi^{\#,0}}_{L(H)} \vertii{\tfrac{\d^2\hat X}{\d\xi^2}}_H \vertii{S^\infty(t,\cdot)}_H \\
&\stackrel{\eqref{est_s_inf}}{\le}& C \, \sigma^{1-2q} \left(1 + \sigma^{1-q}\right),
\end{eqnarray*}
$\PP$-almost surely, where $C < \infty$ is independent of $\sigma$, we conclude that \eqref{min_0_inf_2} holds true, too.
\end{proof}

\begin{proof}[Proof of Proposition~\ref{prop:moment}]
From \eqref{x_inf_0} we obtain with help of \eqref{translation}, Proposition~\ref{prop:frozen}~\eqref{item:semi_2} and \eqref{item:riesz}, and \eqref{pi_st_phi}
\begin{eqnarray*}
\vertii{X_0^\infty(t,\cdot)}_H &\le& \vertii{P_{st}^\# \Pi^\# X^{(0)}_0}_H + \vertii{\int_0^t P_{s(t-t')}^\# \Pi^\# \cT_{-st'} \left(1,0\right)^\trans \d W(t',\cdot)}_H \\
&\stackrel{\eqref{est_semigroupFrozenWave}}{\le}& C_{\vartheta} e^{-\vartheta t} \vertii{X^{(0)}_0}_H + \vertii{\int_0^t P_{s(t-t')}^\# \Pi^\# \cT_{-st'} \left(1,0\right)^\trans \d W(t',\cdot)}_H,
\end{eqnarray*}
$\PP$-almost surely, where $\vartheta$ has been introduced in \eqref{def_kappa}. Therefore, using It\^o's isometry, the second moment can be bounded as follows
\[
\E\left[ \vertii{X_0^\infty(t,\cdot)}_H^{2} \right] \le 2 C_{\vartheta}^{2} e^{-2\vartheta t} \vertii{X^{(0)}_0}_H^2
+ 2 \int_0^t \vertii{P_{s(t-t')}^\# \Pi^\# \cT_{-st'} \left(1,0\right)^\trans \sqrt Q}_{L_2\left(L^2(\R);H\right)}^2 \d t'.
\]
For an orthonormal basis $(e_k)_{k \in \N}$ of $L^2(\R)$ we can write 
\begin{eqnarray*}
\vertii{P_{s(t-t')}^\# \Pi^\# \cT_{-st'} \left(1,0\right)^\trans \sqrt Q}_{L_2\left(L^2(\R);H\right)}^2 &=& \sum_{k=1}^\infty \vertii{P_{s(t-t')}^\# \Pi^\# \cT_{-st'} \left(1,0\right)^t \sqrt{Q} e_k}^2_H \\
&\leq& C_{\vartheta}^{2} e^{-2\vartheta(t-t')} \vertii{\Pi^\#}_{L(H)}^{2} \eps Z \sum_{k=1}^\infty \vertii{\sqrt Q e_k}^2_{L^2(\R)} \\
&\leq& C_{\vartheta}^{2} e^{-2\vartheta(t-t')} \vertii{\Pi^\#}_{L(H)}^{2} \eps Z \vertii{\sqrt Q}^2_{L_2\left(L^2(\R)\right)},
\end{eqnarray*}
so that eventually we arrive at \eqref{moment_bound}.
\end{proof}

\begin{proof}[Proof of Proposition \ref{prop:moment-without-adaption}]
Suppose $Q = Q_N \in L_2(L^2(\R);H^1(\R))$ to be specified further below. With \eqref{x_phi}--\eqref{remainder}, the deviations around the traveling wave without stochastic velocity adaption ($\varphi = 0$) satisfy the following mild-solution formula
\[
X(t,\cdot) = \sigma P_{st,0} X^{(0)}_0 + \int_0^t P_{st,st'}R_0(t',X(t',\cdot),\cdot) \d t' + \sigma \int_0^t P_{st,st'}  (1,0)^\trans \d W(t',\cdot),
\]
which can be justified with arguments analogous to those given at the beginning of the proof of Proposition~\ref{prop:reg_var}. Thanks to Proposition~\ref{prop:frozen}~\eqref{item:semi_2} and \eqref{translation}, in the moving frame this corresponds to 
\[
X(t,\cdot-st) = \sigma P_{st}^\# X^{(0)}_0 + \int_0^t P_{s(t-t')}^\#  R_0(t',X(t',\cdot-st'),\cdot-st') \d t' + \sigma \int_0^t P_{s(t-t')}^\#  (1,0)^\trans \d W(t',\cdot-st').
\]
We are interested in the deviations in direction of the derivative of the traveling wave which is given by the projection $\Pi^{\#,0}$ defined in \eqref{riesz_projection}. Note that by \eqref{frozen_tw_0} we have $\Pi^{\#,0} P_{st}^\# = P_{st}^\# \Pi^{\#,0} = \Pi^{\#,0}$, so that
\[
\Pi^{\#,0}X(t,\cdot-st) = \sigma \Pi^{\#,0} X^{(0)}_0 + \int_0^t \Pi^{\#,0}  R_0(t',X(t',\cdot-st'),\cdot-st') \d t' + \sigma \int_0^t \Pi^{\#,0}  (1,0)^\trans \d W(t',\cdot-st').
\]
Hence,
\begin{align*}
\inner{\Pi^{\#,0} X(t,\cdot-st)}{\tfrac{\d \hat X}{\d\xi}}{H} 
=&\sigma\inner{\Pi^{\#,0} X^{(0)}_0}{\tfrac{\d \hat X}{\d\xi}}{H} \\
&+ \underbrace{\int_0^t \inner{\Pi^{\#,0} R_0(t',X(t',\cdot-st'),\cdot-st')}{\tfrac{\d \hat X}{\d\xi}}{H} \d t'}_{=: I_1}\\
&+ \underbrace{ \sigma\int_0^t\inner{\Pi^{\#,0} (1,0)^\trans \d W(t',\cdot-st')}{\tfrac{\d \hat X}{\d\xi}}{H}}_{=: I_2}.
\end{align*}
In order to compute $\E\left[ \inner{\Pi^{\#,0} X(t,\cdot-st)}{\tfrac{\d \hat X}{\d\xi}}{H}^2\mathds{1}_{\left\{\min\left\{\tau_{q,\sigma}, \tau_{q,\sigma}^\infty\right\} = T \right\}}\right]$ we develop the square and consider the terms separately. For the first term note that
\begin{align*}
&\E\left[\sigma^2\inner{\Pi^{\#,0} X^{(0)}_0}{\tfrac{\d \hat X}{\d\xi}}{H}^2\mathds{1}_{\left\{\min\left\{\tau_{q,\sigma}, \tau_{q,\sigma}^\infty\right\} = T \right\}}\right]\\
&= \sigma^2\inner{\Pi^{\#,0} X^{(0)}_0}{\tfrac{\d \hat X}{\d\xi}}{H}^2 - \PP\left(\min\left\{\tau_{q,\sigma}, \tau_{q,\sigma}^\infty\right\} < T \right) \sigma^2\inner{\Pi^{\#,0} X^{(0)}_0}{\tfrac{\d \hat X}{\d\xi}}{H}^2 \\
&= \sigma^2\inner{\Pi^{\#,0} X^{(0)}_0}{\tfrac{\d \hat X}{\d\xi}}{H}^2 + o(\sigma^2),
\end{align*}
since by Theorem \ref{th:immediate} \eqref{item:immediate_stop} we have $\PP\left(\min\left\{\tau_{q,\sigma}, \tau_{q,\sigma}^\infty\right\} < T \right) \leq C \sigma^{2q}$ for some $C<\infty$ depending on $t$ and $N$ but independent of $\sigma$.

\medskip

Now note that using \eqref{remainder}, with the same argumentation as in the estimate of $S_1^m$ (cf.~\eqref{def_s1m}) in the proof of Theorem~\ref{th:multi}~\eqref{item:multi_decomp} it holds
\[
\E\left[I_1^2\mathds{1}_{\left\{\min\left\{\tau_{q,\sigma}, \tau_{q,\sigma}^\infty\right\} = T \right\}}\right] \leq C^2 \sigma^2 \sigma^{2-4q}(1+\sigma^{1-q})^2
\]
for a constant $C < \infty$ depending on $t$ and $N$ but independent of $\sigma$. For the third term we write again
\[
\E\left[I_2^2\mathds{1}_{\left\{\min\left\{\tau_{q,\sigma}, \tau_{q,\sigma}^\infty\right\} = T \right\}}\right] = \E\left[I_2^2\right] - \E\left[I_2^2\mathds{1}_{\left\{\min\left\{\tau_{q,\sigma}, \tau_{q,\sigma}^\infty\right\} < T \right\}}\right],
\]
and we will recognize that $\E\left[I_2^2\mathds{1}_{\left\{\min\left\{\tau_{q,\sigma}, \tau_{q,\sigma}^\infty\right\} < T \right\}}\right] = o(\sigma^2)$ depending on $t$ and $N$ by dominated convergence together with Theorem~\ref{th:immediate}~\eqref{item:immediate_stop} and the fact that $\sigma^{-2} I_2^2 \mathds{1}_{\left\{\min\left\{\tau_{q,\sigma}, \tau_{q,\sigma}^\infty\right\} < T \right\}} \leq \sigma^{-2}I_2^2,$ where the latter is a random variable independent of $\sigma$ whose expectation is finite  by the computations that follow.

\medskip

Take an orthonormal basis $(e_k)_{k \in \N}$ of $L^2(\R)$ with $e_k \in C_\mathrm{c}^\infty(\R)$ for each $k\in \N$ (existence of such a basis follows by applying the Gram-Schmidt algorithm to a countable dense subset of $L^2(\R)$ in $C_\mathrm{c}^\infty(\R)$). With It\^o's isometry we obtain
\begin{equation*}
\E\left[I_2^2\right] = \sigma^2 \int_0^t \sum_{k=1}^\infty \inner{\Pi^{\#,0} \cT_{-st'} (1,0)^\trans \sqrt{Q_N}e_k}{\tfrac{\d \hat X}{\d\xi}}{H}^2 \d t'.
\end{equation*} 
Now take the sequence of Hilbert Schmidt operators $(Q_N)_{N \in \N}$ with
\[
\sqrt{Q_N}e_k = \left\{
\begin{matrix}
e_k & : k \leq N,\\ 
0 & : k > N,
\end{matrix}
\right.
\]
and compute the with $N$ increasing second moment of $I_2$ for each $N \in \N$
\begin{eqnarray*}
\E\left[I_2^2\right] 
&=& \sigma^2 \int_0^t \sum_{k=1}^N \inner{\Pi^{\#,0} (1,0)^\trans e_k(\cdot-st')}{\tfrac{\d \hat X}{\d\xi}}{H}^2 \d t' \\
&\stackrel{\eqref{h_norm}}{=}& \sigma^2 \eps^2 Z^2 \int_0^t \sum_{k=1}^N \inner{e_k(\cdot-st')}{ \left(\left(\Pi^{\#,0}\right)^*\tfrac{\d \hat X}{\d\xi}\right)_1}{L^2(\R)}^2 \d t',
\end{eqnarray*}
where $\left(\left(\Pi^{\#,0}\right)^*\tfrac{\d \hat X}{\d\xi}\right)_1$ denotes the first component of $\left(\Pi^{\#,0}\right)^*\tfrac{\d \hat X}{\d\xi}.$
Taking the limit $N \rightarrow \infty$, we obtain with Parseval's identity
\[
\E\left[I_2^2\right] \rightarrow \sigma^2 \eps^2 Z^2 \int_0^t  \left\| \left(\left(\Pi^{\#,0}\right)^*\tfrac{\d \hat X}{\d\xi}\right)_1\right\|_{L^2(\R)}^2 \d t' = \sigma^2 \eps Z t \left\| (1,0)^\trans \left(\Pi^{\#,0}\right)^*\tfrac{\d \hat X}{\d\xi}\right\|_{H}^2.
\]
By the previous estimates, the mixed terms $\E\left[ 2 \sigma\inner{\Pi^{\#,0} X^{(0)}_0}{\tfrac{\d \hat X}{\d\xi}}{H} I_1\mathds{1}_{\left\{\min\left\{\tau_{q,\sigma}, \tau_{q,\sigma}^\infty\right\} = T \right\}}\right]$ and $\E[2I_1 I_2\mathds{1}_{\left\{\min\left\{\tau_{q,\sigma}, \tau_{q,\sigma}^\infty\right\} = T \right\}}]$ are of order $o(\sigma^2)$ depending on $t$ and $N$ by the Cauchy-Schwarz inequality. For the third mixed term, we write again
\begin{eqnarray*}
\lefteqn{\E\left[ 2\sigma\inner{\Pi^{\#,0} X^{(0)}_0}{\tfrac{\d \hat X}{\d\xi}}{H} I_2 \mathds{1}_{\left\{\min\left\{\tau_{q,\sigma}, \tau_{q,\sigma}^\infty\right\} = T \right\}} \right]} \\
&=& \E\left[ 2\sigma\inner{\Pi^{\#,0} X^{(0)}_0}{\tfrac{\d \hat X}{\d\xi}}{H} I_2 \right] - \E\left[ 2\sigma\inner{\Pi^{\#,0} X^{(0)}_0}{\tfrac{\d \hat X}{\d\xi}}{H} I_2 \mathds{1}_{\left\{\min\left\{\tau_{q,\sigma}, \tau_{q,\sigma}^\infty\right\} < T \right\}} \right] \\
&=& o(\sigma^2)
\end{eqnarray*}
depending on $t$ and $N$ since the first term in the second line is equal to $0$ because $I_2$ is a martingale (and equal to $0$ for $t=0$) and the second term in the second line is of order $o(\sigma^2)$ by the Cauchy-Schwarz inequality in conjunction with dominated convergence as $\sigma \searrow 0.$

\medskip

In order to ensure linear growth in time, we prove that $(1,0)^\trans \left(\Pi^{\#,0}\right)^*\tfrac{\d \hat X}{\d\xi} \neq 0$ by contradiction. Assume that $(1,0)^\trans \left(\Pi^{\#,0}\right)^*\tfrac{\d \hat X}{\d\xi} = 0$. Then for $\phi \in L^2(\R)$ we have
\[
0 = \eps Z \inner{\left(\left(\Pi^{\#,0}\right)^*\tfrac{\d \hat X}{\d\xi}\right)_1}{\phi}{L^2(\R)} = \inner{\tfrac{\d \hat X}{\d\xi}}{{\Pi^{\#,0}}(1,0)^\trans\phi}{H}.
\]
Since $\Pi^{\#,0}$ is a projection on the linear subspace generated by  $\tfrac{\d \hat X}{\d\xi}$ (see \eqref{riesz_projection}), we obtain
$\Pi^{\#,0}(1,0)^\trans\phi = 0.$ Hence, for any $Y \in H$ it holds
\[
0 = \inner{Y}{{\Pi^{\#,0}}(1,0)^\trans\phi}{H} = \eps Z \inner{\left(\left(\Pi^{\#,0}\right)^*Y\right)_1}{\phi}{L^2(\R)}.
\]
The above equality is in particular valid for a non-trivial eigenfunction $Y = (w,q)^\trans \neq 0$ of $\left(\cL^\#\right)^*$ with eigenvalue $0$, that is, $\left(\left(\Pi^{\#,0}\right)^*Y\right)_1 = 0$. Note that such an eigenfunction exists because of \eqref{frozen_tw_0} or Proposition~\ref{prop:frozen}~\eqref{item:point}, respectively. Indeed, we have $\ind(\cL^\#) = \ind\left(\left(\cL^\#\right)^*\right) = 0$ by Definition~\ref{def:spectrum}~\eqref{item:def_point} and because for any $\tilde Y \in H_\C$ we have
\[
0 = \inner{\tilde Y}{\cL^\# \tfrac{\d \hat X}{\d\xi}}{H_\C} = \inner{\left(\cL^\#\right)^* \tilde Y}{\tfrac{\d \hat X}{\d\xi}}{H_\C},
\]
the range of $\left(\cL^\#\right)^*$ is orthogonal to $\tfrac{\d \hat X}{\d\xi}$, so that the dimension of the kernel of $\left(\cL^\#\right)^*$ is at least $1$. Interchanging the roles of $\cL^\#$ and $\left(\cL^\#\right)^*$ shows that the range of $\cL^\#$ is orthogonal to the kernel of $\left(\cL^\#\right)^*$. Since the range of $\cL^\#$ has codimension $1$ we conclude that the dimension of the kernel of $\left(\cL^\#\right)^*$ is $1$.

\medskip

For such $Y$ we additionally have
\begin{align*}
\left(\Pi^{\#,0}\right)^*Y &\stackrel{\eqref{riesz_projection}}{=} \left(-\frac{1}{2 \pi i} \ointctrclockwise_{\verti{\lambda} = r} \left(\overline{\lambda} \id_{H_\C} - \left(\cL^\#\right)^*\right)^{-1} \d\overline{\lambda}\right) Y \\
&= \left(\frac{1}{2 \pi i} \ointctrclockwise_{\verti{\lambda} = r} \left(\lambda \id_{H_\C} - \left(\cL^\#\right)^*\right)^{-1} \d\lambda\right) Y = Y.
\end{align*}
With $\left(\left(\Pi^{\#,0}\right)^*Y\right)_1 = 0$ this implies immediately that $w=0$. In order to determine the second component, we compute $\left(\cL^\#\right)^*Y$ explicitly. Let $Y_j = (w_j,q_j)^\trans \in C^\infty_\mathrm{c}(\R;\C)^2$ for $j=1,2$, then by \eqref{fw_op} we obtain through integration by parts and regrouping terms
\begin{eqnarray*}
\inner{\cL^\# Y_1}{Y_2}{H_\C} &=& \eps Z \nu \int_\R (\partial_\xi^2 \overline{w_1}) w_2 \d \xi + \eps Z  \int_\R f'(\hat u)\overline{w_1}w_2 \d \xi - \eps Z  \int_\R \overline{q_1} w_2 \d \xi \\
&& - \eps Z s \int_\R (\partial_\xi \overline{w_1})w_2 \d \xi + \eps Z  \int_\R \overline{w_1}q_2 \d \xi - \eps Z \gamma  \int_\R \overline{q_1} q_2 \d \xi - Z s \int_\R (\partial_\xi \overline{q_1}) q_2 \d \xi \\
&=& \inner{Y_1}{\begin{pmatrix}\nu \partial_\xi^2 + f'\left(\hat u\right) + s \partial_\xi & 1 \\ -\eps & - \eps \gamma + s \partial_\xi\end{pmatrix}Y_2}{H_\C} \\
&=& \inner{Y_1}{\left(\cL^\#\right)^*Y_2}{H_\C}.
\end{eqnarray*}
Now $\left(\left(\Pi^{\#,0}\right)^*Y\right)_1 =0$ is equivalent to
\[
\nu \partial_\xi^2 w + f'(\hat u) w + s \partial_\xi w +\eps q = 0.
\]
With $w=0$ this implies $q =0,$ a contradiction to $Y$ being non-trivial.
\end{proof}
%


\section{Conclusions \& Outlook\label{sec:conclusions}}

In this work, we have shown how to derive a multiscale decomposition near a deterministically stable traveling wave for the FitzHugh-Nagumo SPDE-ODE system. This decomposition into a component along the translation invariant family and into its complement exploits the small noise and small time scale separation parameters to derive leading-order dynamics. More precisely, the stochastically adjusted wave speed is given by an SODE to account for stochastic phase dynamics along the deterministically neutral translation mode. The fluctuations in the infinitely many remaining modes are captured by an SPDE system. Locally, near the wave, these two equations can be linearized to provide a relatively explicit solution representation which approximates well the deviations in the corresponding modes. In particular, our approach does not require an analytic semigroup generated by the linearization and it applies to much wider classes of SPDE systems as well as other patterns.

\medskip

Natural generalizations of our work could be to treat the case of a cylindrical Wiener process allowing for translation-invariant (white) noise or to allow for multiplicative noise. Furthermore, it would be a desirable goal to obtain optimal estimates on the relevant stopping times, where the approach breaks down which has been addressed for the FitzHugh-Nagumo system with a regularizing Laplacian in the second component in \cite{HamsterHupkes2020PhysD} and for stochastic neural field equations in \cite{MacLaurinBressloff2020}. These could then be compared with high-accuracy numerical simulations; cf.~the references in numerics in the introduction. Of course, other classes of traveling wave patterns and effects could also be tackled via multiscale decomposition, e.g., periodic wave trains, deterministically chaotic waves, stochastic pulse splitting effects induced by large deviations or propagation failure, just to name a few. Many of these effects are somewhat understood for special examples of scalar SPDEs (see~\cite{KuehnSPDEwaves} and the references therein) but it is clear that the dynamics can be far more complicated for SPDE systems. From a technical viewpoint, we have already pointed out that applying stochastic slow manifold methods and related fast-slow sample path estimates would be natural directions for future research. Furthermore, a generalization from the Hilbert-space setting to Banach spaces appears possible as the variational approach also works if $V$ and $V^*$ are Banach spaces (cf.~Assumptions~\ref{ass:liu_roeckner}), we do not require any orthogonality, and Riesz spectral projections are available in Banach spaces, too. Notably, related techniques in the deterministic setting, such as Lyapunov-Schmidt or center-manifold reductions work in Banach spaces as well. In summary, it seems evident that rigorous multiscale methods for pattern formation in SPDEs have already been very successful but still need additional development.     

\appendix
\section{Proofs of auxiliary results\label{sec:proofs_prel}}
\subsection{Existence and uniqueness of solutions using the variational approach\label{sec:var_construct}}
In this section, we prove Proposition~\ref{prop:ex_var} and Proposition~\ref{prop:reg_var}. Note that the formulation \eqref{stoch_evol} for general operators $\cA \colon [0,T] \times V \times \Omega \to V^*$ and $\cB \colon [0,T] \times V \times \Omega \to L_2(U;H)$ can be found in \cite{LiuRoeckner2010} and existence of variational solutions has been established under the following conditions:

\medskip

\begin{assumptions}\label{ass:liu_roeckner}
 The Hilbert space $H$ is separable, $V$ is a reflexive Banach space which is continuously and densly embedded into $H$, and $(W_U(t),t \ge 0)$ is a cylindrical Wiener process on a separable Hilbert space $U$ with respect to a complete filtered probability space
\[
\left(\Omega,\cF,\left(\cF_t\right)_{t \in [0,T]},\PP\right)
\]
with a complete and right-continuous filtration $\left(\cF_t\right)_{t \in [0,T]}$. Furthermore, there exist $\alpha > 1$, $\beta \ge 0$, $\theta > 0$, $C_1, C_2 < \infty$, a positive $\left(\cF_t\right)_{t \in [0,T]}$-adapted process $F \in L^{\frac p 2}\left([0,T] \times \Omega; \d t \times \PP\right)$ with $p \ge \beta+2$, and $g: \, V \to [0,\infty)$ measurable and locally bounded, such that the following conditions (LR\ref{item:hem})--(LR\ref{item:bound2}) hold for all $Y,Y_1,Y_2 \in V$ and $(t,\omega) \in [0,T] \times \Omega$:
 \begin{enumerate}[(LR1)]
  \item\label{item:hem} \emph{Hemicontinuity:} the map $\R \owns \tau \mapsto \dualpair{V^*}{\cA(t,Y_1 + \tau Y_2)}{Y}{V} \in \R$ is continuous.
  \item\label{item:mon} \emph{Local monotonicity:}
  \begin{align*}
  &2 \dualpair{V^*}{\cA(t,Y_1) - \cA(t,Y_2)}{Y_1-Y_2}{V} + \vertii{\cB(t,Y_1) - \cB(t,Y_2)}_{L_2(U;H)}^2 \\
  & \quad \le \left(C_1+g(Y_2)\right) \vertii{Y_1 - Y_2}_H^2.
  \end{align*}
  \item\label{item:coercive} \emph{Coercivity:} $2 \dualpair{V^*}{\cA(t,Y)}{Y}{V} + \vertii{\cB(t,Y)}_{L_2(U;H)}^2 + \theta \vertii{Y}_V^\alpha \le F(t) + C_1 \vertii{Y}_H^2$.
  \item\label{item:growth} \emph{Growth:} $\vertii{\cA(t,Y)}^{\frac{\alpha}{\alpha-1}}_{V^*} \le \left(F(t) + C_1 \vertii{Y}_V^\alpha\right) \left(1 + \vertii{Y}_H^\beta\right)$.
  \item\label{item:bound1} $\vertii{\cB(t,Y)}_{L_2(U;H)}^2 \le C_2 \left(F(t) + \vertii{Y}_H^2\right)$.
  \item\label{item:bound2} $g(Y) \le C_2 \left(1+\vertii{Y}_V^\alpha\right) \left(1+\vertii{Y}_H^\beta\right)$.
 \end{enumerate}
 %
\end{assumptions}
Note that conditions (LR\ref{item:hem}) and (LR\ref{item:coercive}) are the same as the classical ones from~\cite{KrylovRozovskii1979}, while conditions (LR\ref{item:mon}) and (LR\ref{item:growth}) are weaker. A main theorem presented in~\cite{LiuRoeckner2010} reads as follows:
\begin{theorem}[Liu~and~R\"ockner~\cite{LiuRoeckner2010}]\label{th:liu_roeckner}
Under Assumptions~\ref{ass:liu_roeckner}, for any initial datum
\[
X(0) = X^{(0)} \in L^p\left(\Omega,\cF_{0},\PP;H\right),
\]
equation~\eqref{stoch_evol} has a unique variational solution $\left(X(t,\cdot)\right)_{t \in [0,T]}$ (cf.~Definition~\ref{def:variational}), which additionally satisfies
 \[
  \E\left[\sup_{t \in [0,T]} \vertii{X(t,\cdot)}_H^p + \int_0^T \vertii{X(t,\cdot)}_V^\alpha \d t\right] < \infty.
 \]
\end{theorem}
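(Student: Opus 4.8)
The plan is to construct the solution by a Galerkin (finite-dimensional projection) scheme, derive uniform a priori bounds, extract weakly convergent subsequences, and identify the limit by a local-monotonicity argument. First I would fix a complete orthonormal system $(e_k)_{k \in \N}$ of $H$ contained in $V$ whose span $H_n := \mathrm{span}\{e_1,\dots,e_n\}$ is dense in $V$, together with the orthogonal projections $P_n \colon H \to H_n$, and project \eqref{stoch_evol} onto $H_n$ to obtain the finite-dimensional Itô SDEs for $X^{(n)}$. Hemicontinuity (LR\ref{item:hem}), local monotonicity (LR\ref{item:mon}) and coercivity (LR\ref{item:coercive}) guarantee, by standard finite-dimensional SDE theory, a unique global solution $X^{(n)}$ with values in $H_n$, where coercivity rules out explosion.

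Next I would establish the key a priori estimates. Applying Itô's formula to $\vertii{X^{(n)}(t,\cdot)}_H^p$, using coercivity (LR\ref{item:coercive}) to absorb the drift together with the growth of $\cB$ from (LR\ref{item:bound1}), and controlling the stochastic integral with the Burkholder-Davis-Gundy inequality followed by Gronwall's lemma, yields the bound
\[
\E\left[\sup_{t \in [0,T]} \vertii{X^{(n)}(t,\cdot)}_H^p + \int_0^T \vertii{X^{(n)}(t,\cdot)}_V^\alpha \d t\right] \le C < \infty,
\]
uniformly in $n$. With the growth condition (LR\ref{item:growth}) this also bounds $\cA(\cdot,X^{(n)})$ in $L^{\alpha/(\alpha-1)}([0,T]\times\Omega;V^*)$ and, via (LR\ref{item:bound1}), $\cB(\cdot,X^{(n)})$ in $L^2([0,T]\times\Omega;L_2(U;H))$. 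By reflexivity of $V$ and Banach-Alaoglu I pass to a subsequence along which $X^{(n)} \rightharpoonup X$ in $L^\alpha([0,T]\times\Omega;V)$, $\cA(\cdot,X^{(n)}) \rightharpoonup \cY$ in $L^{\alpha/(\alpha-1)}([0,T]\times\Omega;V^*)$, and $\cB(\cdot,X^{(n)}) \rightharpoonup \cZ$ in $L^2([0,T]\times\Omega;L_2(U;H))$; testing the projected equation against fixed basis vectors and letting $n \to \infty$ shows that the limit satisfies $X(t,\cdot) = X^{(0)} + \int_0^t \cY(t',\cdot)\,\d t' + \int_0^t \cZ(t',\cdot)\,\d W_U(t',\cdot)$.

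The main obstacle is the identification $\cY = \cA(\cdot,X)$ and $\cZ = \cB(\cdot,X)$, since the coefficients are only \emph{locally} monotone through the term $g(Y_2)$ in (LR\ref{item:mon}), whose integrability is ensured by (LR\ref{item:bound2}). The Minty-type trick must therefore be run with the random exponential weight $r(t) := \exp\left(-\int_0^t (C_1 + g(\Phi(t')))\,\d t'\right)$ for a bounded test process $\Phi \in L^\alpha([0,T]\times\Omega;V)$. I would apply the Itô product rule to $r(t)\vertii{X(t,\cdot)}_H^2$ for the limit equation, exploit weak lower semicontinuity of the norm to compare with $\liminf_n \E[r(T)\vertii{X^{(n)}(T,\cdot)}_H^2]$, and invoke local monotonicity to deduce
\[
\E\!\left[\int_0^T r(t)\,\dualpair{V^*}{\cY - \cA(t,\Phi)}{X - \Phi}{V}\,\d t\right] \ge 0.
\]
Choosing $\Phi = X - \lambda \Psi$ for arbitrary bounded $\Psi$, dividing by $\lambda > 0$, and letting $\lambda \searrow 0$ using hemicontinuity (LR\ref{item:hem}) then forces $\cY = \cA(\cdot,X)$, and the accompanying equality recovers $\cZ = \cB(\cdot,X)$, so $X$ is a variational solution.

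Finally, uniqueness follows directly from (LR\ref{item:mon}): applying Itô's formula to the difference of two solutions against the same weight $r$ and using Gronwall forces them to coincide $\PP$-almost surely. The claimed moment estimate is inherited from the uniform a priori bound by weak lower semicontinuity, and the continuous $H$-valued modification is provided by the Itô formula in the variational Gelfand-triple setting (cf.~\eqref{rigged_alt}), which also upgrades the $L^\infty([0,T];H)$ bound to the stated supremum bound.
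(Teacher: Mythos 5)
This statement is imported by the paper verbatim from Liu and R\"ockner \cite{LiuRoeckner2010} and is not proved in the paper itself, so the only meaningful comparison is with the cited source; your sketch reconstructs exactly that argument --- Galerkin projection onto $H_n = \mathrm{span}\{e_1,\dots,e_n\}$, uniform $p$-th moment bounds via It\^o's formula, Burkholder--Davis--Gundy and Gronwall, weak compactness of $X^{(n)}$, $\cA(\cdot,X^{(n)})$, $\cB(\cdot,X^{(n)})$ from (LR3)--(LR5), and identification of the limit by the Minty trick run with the exponential weight $r(t)=\exp\left(-\int_0^t \left(C_1+g(\Phi(t'))\right)\d t'\right)$ that local monotonicity (LR2) together with (LR6) necessitates, followed by uniqueness from the same weighted It\^o--Gronwall argument and path continuity from the variational It\^o formula in the Gelfand triple. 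Your proposal is correct in outline and follows essentially the same route as the original proof.
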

We will use Theorem~\ref{th:liu_roeckner} to prove Proposition~\ref{prop:ex_var} by verifying Assumptions~\ref{ass:liu_roeckner}.

\begin{proof}[Proof of Proposition~\ref{prop:ex_var}]
The Hilbert spaces $U = L^2(\R)$ and $H \stackrel{\eqref{rigged_alt}}{=} L^2(\R) \eoperp L^2(\R)$ are obviously separable, $V \stackrel{\eqref{rigged_alt}}{=} H^1(\R) \eoperp L^2(\R)$ is reflexive, and as the test functions are dense in $H$ and $V$, also $V$ is dense in $H$. Next, we concentrate on verifying (LR\ref{item:hem})--(LR\ref{item:bound2}) with the choices 
\begin{subequations}\label{choices_var}
\begin{align}
\alpha &:= 2, \label{choice_alpha} \\
\beta &:= 4, \label{choice_beta} \\
\theta &:= \nu, \label{choice_theta} \\
C_1 &:= \max\left\{2 \eta_1, 1,\eta_1 + \nu, 4 \nu^2, \frac{64 \eta_4^2}{\eps^2 Z^2}\right\}, \label{choice_c1} \\
C_2 &:= 1\label{choice_c2}, \\
g &:= 0 \label{choice_g}, \\
F(t) &:= \max\left\{1 + \left(2 \sqrt\eps + \eps \gamma + \eta_4 \left(1+3 \vertii{\hat u}_{L^\infty(\R)}^2\right)\right)^4,\eps Z \sigma \vertii{\sqrt Q}_{L_2(U)}^2\right\}, \label{choice_f} \\
p &\ge 6. \label{choice_p}
\end{align}
\end{subequations}
%

 \proofstep{Proof of (LR\ref{item:hem})}
 We have for $Y = \left(w,q\right)^\trans, Y_1 = \left(w_1,q_1\right)^\trans, Y_2 = \left(w_2,q_2\right)^\trans \in V$,
 \begin{eqnarray*}
  \lefteqn{\dualpair{V^*}{\cA(t,Y_1 + \tau Y_2)}{Y}{V}} \\
  &\stackrel{\eqref{def_a}}{=}& \eps Z \dualpair{H^{-1}(\R)}{\nu \partial_x^2 \left(w_1 + \tau w_2\right)}{w}{H^1(\R)} \\
  && + \eps Z \dualpair{H^{-1}(\R)}{f\left(w_1 + \tau w_2 + \hat u(\cdot + s t)\right) - f\left(\hat u(\cdot + s t)\right)}{w}{H^1(\R)} \\
  && - \eps Z \dualpair{H^{-1}(\R)}{q_1 + \tau q_2}{w}{H^1(\R)} \\
  && + \eps Z \dualpair{L^2(\R)}{w_1 + \tau w_2 - \gamma \left(q_1 + \tau q_2\right)}{q}{L^2(\R)} \\
  &=& \tau \eps Z \left(- \nu \inner{\partial_x w_2}{\partial_x w}{L^2(\R)} - \inner{q_2}{w}{L^2(\R)} + \inner{w_2 - \gamma q_2}{q}{L^2(\R)}\right) \\
  && + \eps Z \inner{f\left(w_1 + \tau w_2 + \hat u(\cdot + s t)\right) - f\left(\hat u(\cdot + s t)\right)}{w}{L^2(\R)} \\
  && - \eps Z \nu \inner{\partial_x w_1}{\partial_x w}{L^2(\R)} - \eps Z \inner{q_1}{w}{L^2(\R)} + \eps Z \inner{w_1 - \gamma q_1}{q}{L^2(\R)}.
 \end{eqnarray*}
 Hence, hemicontinuity follows if
 \[
  h \colon \R \to \R, \quad \tau \mapsto h(\tau) := \eps Z \inner{f\left(w_1 + \tau w_2 + \hat u(\cdot + s t)\right) - f\left(\hat u(\cdot + s t)\right)}{w}{L^2(\R)}
 \]
 is continuous. This is true because for $\tau_1, \tau_2 \in \R$ we have
 \begin{eqnarray*}
  \verti{h(\tau_1) - h(\tau_2)} &=& \eps Z \verti{\inner{f\left(w_1 + \tau_1 w_2 + \hat u(\cdot + s t)\right) - f\left(w_1 + \tau_2 w_2 + \hat u(\cdot + s t)\right)}{w}{L^2(\R)}} \\
  &\stackrel{\eqref{cond_f_diff_4}}{\le}& \eps Z \eta_4 \left(1 + 6 \vertii{\hat u}_{L^\infty(\R)}^2 + 6 \vertii{w_1}_{L^\infty(\R)}^2 + 3 \left(\tau_1^2 + \tau_2^2\right) \vertii{w_2}_{L^\infty(\R)}^2\right) \\
  && \vertii{w_2}_{L^2(\R)} \verti{\tau_1 - \tau_2} \vertii{w}_{L^2(\R)} \\
  &\stackrel{\eqref{v_norm}}{\le}& \eta_4 \left(1 + 6 \vertii{\hat u}_{L^\infty(\R)}^2 + \frac{6}{\eps Z} \vertii{Y_1}_V^2 + \frac{3 (\tau_1^2 + \tau_2^2)}{\eps Z} \vertii{Y_2}_V^2\right) \\
  &&\vertii{Y_2}_H \vertii{Y}_H \verti{\tau_1 - \tau_2},
 \end{eqnarray*}
 where the Sobolev embedding theorem in form of $\vertii{w_j}_{L^\infty(\R)} \le \vertii{w_j}_{H^1(\R)}$ has been applied.
 
 \proofstep{Proof of (LR\ref{item:mon})}
 For proving local monotonicity, observe that for $Y_1 = (w_1,q_1)^\trans, Y_2 = (w_2,q_2)^\trans \in V$,
 \begin{eqnarray*}
  \lefteqn{2 \dualpair{V^*}{\cA(t,Y_1) - \cA(t,Y_2)}{Y_1 - Y_2}{V}} \\
  &\stackrel{\eqref{def_a}}{=}& - 2 \eps Z \nu  \vertii{\partial_x \left(w_1 - w_2\right)}_{L^2(\R)}^2 \\
  && + 2 \eps Z \inner{f\left(w_1 + \hat u\left(\cdot + s t\right)\right) - f\left(w_2 + \hat u\left(\cdot + s t\right)\right)}{w_1 - w_2}{L^2(\R)} \\
  && - 2 \eps Z \gamma \vertii{q_1 - q_2}_{L^2(\R)}^2.
 \end{eqnarray*}
 On noting that
 \begin{eqnarray*}
 2 \eps Z \inner{f\left(w_1 + \hat u\left(\cdot + s t\right)\right) - f\left(w_2 + \hat u\left(\cdot + s t\right)\right)}{w_1 - w_2}{L^2(\R)} &\stackrel{\eqref{cond_f_prime}}{\le}& 2 \eps Z \eta_1 \vertii{w_1 - w_2}_{L^2(\R)}^2 \\
  &\stackrel{\eqref{v_norm}}{\le}& 2 \eta_1 \vertii{Y_1 - Y_2}_H^2,
 \end{eqnarray*}
 and $\cB\left(t,Y_1\right) - \cB\left(t,Y_2\right) \stackrel{\eqref{def_b}}{=} 0$, this results in
 \begin{eqnarray*}
  2 \dualpair{V^*}{\cA(t,Y_1) - \cA(t,Y_2)}{Y_1 - Y_2}{V} + \vertii{\cB\left(t,Y_1\right) - \cB\left(t,Y_2\right)}_{L_2(U;H)}^2 &\le& 2 \eta_1 \vertii{Y_1 - Y_2}_H^2 \\
  &\stackrel{\eqref{choice_c1}}{\le}& C_1 \vertii{Y_1 - Y_2}_H^2.
 \end{eqnarray*}
Note that we have verified the classical monotonicity assumption in~\cite{KrylovRozovskii1979}, i.e., local monotonicity is not needed here.
 
 \proofstep{Proof of (LR\ref{item:coercive})}
 For proving coercivity, we note that for $Y = (w,q)^\trans \in V$,
 \begin{eqnarray*}
  \dualpair{V^*}{\cA(t,Y)}{Y}{V} &\stackrel{\eqref{def_a}}{=}& - \eps Z \nu \vertii{\partial_x w}_{L^2(\R)}^2 + \eps Z \inner{f\left(w + \hat u\left(\cdot + s t\right)\right) - f\left(\hat u\left(\cdot + s t\right)\right)}{w}{L^2(\R)} \\
  && - \eps Z \gamma \vertii{q}_{L^2(\R)}^2.
 \end{eqnarray*}
 Using
 \[
 \eps Z \inner{f\left(w + \hat u\left(\cdot + s t\right)\right) - f\left(\hat u\left(\cdot + s t\right)\right)}{w}{L^2(\R)} \stackrel{\eqref{cond_f_prime}}{\le} \eps Z \eta_1 \vertii{w}_{L^2(\R)}^2 \stackrel{\eqref{h_norm}}{\le} \eta_1 \vertii{Y}_H^2
 \]
 and $\vertii{\cB(t,Y)}_{L_2(U;H)}^2 \stackrel{\eqref{h_norm}, \eqref{def_b}}{=} \eps Z \sigma \vertii{\sqrt Q}_{L_2(U)}^2$, we obtain the estimate
 \begin{eqnarray*}
  \dualpair{V^*}{\cA(t,Y)}{Y}{V} + \vertii{\cB(t,Y)}_{L_2(U;H)}^2 &\le& \eps Z \sigma \vertii{\sqrt Q}_{L_2(U)}^2 - \eps Z \nu \vertii{\partial_x w}_{L^2(\R)}^2 + \eta_1 \vertii{Y}_H^2 \\
  &\stackrel{\eqref{hv_norm}}{=}& \eps Z \sigma \vertii{\sqrt Q}_{L_2(U)}^2 + \left(\eta_1 + \nu\right) \vertii{Y}_H^2 - \nu \vertii{Y}_V^2 \\
  &\stackrel{\eqref{choices_var}}{\le}& F(t) + C_1 \vertii{Y}_H^2 - \theta \vertii{Y}_V^2,
 \end{eqnarray*}
which yields (LR\ref{item:coercive}).
  
 \proofstep{Proof of (LR\ref{item:growth})}
 We have for $Y = (w,q)^\trans, \tilde Y = \left(\tilde w, \tilde q\right)^\trans \in V$,
 \begin{eqnarray*}
  \lefteqn{\dualpair{V^*}{\cA(t,Y)}{\tilde Y}{V}} \\
  &\stackrel{\eqref{def_a}}{=}& - \eps Z \nu \inner{\partial_x w}{\partial_x \tilde w}{L^2(\R)} + \eps Z \inner{f\left(w + \hat u(\cdot + s t)\right) - f\left(\hat u(\cdot + s t)\right)}{\tilde w}{L^2(\R)} - \eps Z \inner{q}{\tilde w}{L^2(\R)} \\
  && + \eps Z \inner{w}{\tilde q}{L^2(\R)} - \eps Z \gamma \inner{q}{\tilde q}{L^2(\R)} \\
  &\stackrel{\eqref{cond_f_diff_4}}{\le}& \eps Z \nu \vertii{\partial_x w}_{L^2(\R)} \vertii{\partial_x \tilde w}_{L^2(\R)} + \eps Z \eta_4 \left(1 + 3 \vertii{\hat u}_{L^\infty(\R)}^2 +  2 \vertii{w}_{L^\infty(\R)}^2\right) \vertii{w}_{L^2(\R)} \vertii{\tilde w}_{L^2(\R)} \\
  && + \eps Z \vertii{q}_{L^2(\R)} \vertii{\tilde w}_{L^2(\R)} + \eps Z \vertii{w}_{L^2(\R)} \vertii{\tilde q}_{L^2(\R)} + \eps Z \gamma \vertii{q}_{L^2(\R)} \vertii{\tilde q}_{L^2(\R)} \\
  &\stackrel{\eqref{hv_norm}}{\le}& \left(\vertii{Y}_V \left(\nu + \frac{4 \eta_4}{\eps Z} \vertii{Y}_H^2\right) + \vertii{Y}_H \left(2 \sqrt\eps + \eps \gamma + \eta_4 \left(1+3 \vertii{\hat u}_{L^\infty(\R)}^2\right)\right)\right) \vertii{\tilde Y}_V,
 \end{eqnarray*}
 where we have used that
 \[
 \vertii{w}_{L^\infty(\R)}^2 \le 2 \vertii{w}_{L^2(\R)} \vertii{\tfrac{\d w}{\d x}}_{L^2(\R)} \stackrel{\eqref{hv_norm}}{\le} \frac{2}{\eps Z} \vertii{Y}_H \vertii{Y}_V.
 \]
 This implies
 \begin{eqnarray*}
 \lefteqn{\vertii{\cA(t,Y)}_{V^*}^{\frac{\alpha}{\alpha-1}}} \\
 &\stackrel{\eqref{choice_alpha}}{=}& \vertii{\cA(t,Y)}_{V^*}^2 \\
 &\le& 2 \vertii{Y}_V^2 \left(\nu + \frac{4 \eta_4}{\eps Z} \vertii{Y}_H^2\right)^2 + 2 \left(2 \sqrt\eps + \eps \gamma + \eta_4 \left(1+3 \vertii{\hat u}_{L^\infty(\R)}^2\right)\right)^2 \vertii{Y}_H^2 \\
 &\le& \left(1 + \left(2 \sqrt\eps + \eps \gamma + \eta_4 \left(1+3 \vertii{\hat u}_{L^\infty(\R)}^2\right)\right)^4 + 4 \max\left\{\nu^2, \frac{16 \eta_4^2}{\eps^2 Z^2}\right\} \vertii{Y}_V^2\right)\left(1+\vertii{Y}_H^4\right) \\
 &\stackrel{\eqref{choices_var}}{\le}&  \left(F(t) + C_1 \vertii{Y}_V^\alpha\right) \left(1 + \vertii{Y}_H^\beta\right),
 \end{eqnarray*}
 i.e., (LR\ref{item:growth}). Note that here we have not obtained the classical growth condition in~\cite{KrylovRozovskii1979}.
 
 \proofstep{Proof of (LR\ref{item:bound1})}
 We have $\vertii{\cB(t,Y)}_{L_2(U;H)}^2 \stackrel{\eqref{h_norm}, \eqref{def_b}}{=} \eps Z \sigma \vertii{\sqrt Q}_{L_2(U)}^2 \le C_2 \left(F(t) + \vertii{Y}_H^2\right)$.
 
 \proofstep{Proof of (LR\ref{item:bound2})}
 This trivially holds because $g = 0$.
\end{proof}

\begin{proof}[Proof of Proposition~\ref{prop:reg_var}]
Denote by $X = (u,v)^\trans$ the solution from Proposition~\ref{prop:ex_var}. We first apply \cite[Proposition~G.0.5~(i)]{LiuRoeckner2015} and verify the conditions there to conclude that $X$ meets a mild-solution representation. Notably, $\cB(t,X(t,\cdot))$ (cf.~\eqref{def_b}) does for $t \in [0,T]$ neither depend on $t$ nor on $X$ and takes values in $L_2(U;H)$. Hence, $\cB(t,X(t,\cdot))$ is deterministic and in particular
\[
\PP\left[\int_0^T \vertii{\cB(t,X(t,\cdot))}_{L_2(U;H)} \d t < \infty\right] = 1
\]
trivially holds true. Furthermore, by the regularity of the variational solution stated in Definition~\ref{def:variational}, we have
\[
\E\left[\int_0^T \vertii{X(t,\cdot)}_V \d t\right] \le \sqrt T \left(\E\left[\int_0^T \vertii{X(t,\cdot)}_V^2 \d t\right]\right)^{\frac 1 2} < \infty,
\]
which is why obviously also
\[
\PP\left[\int_0^T \vertii{X(t,\cdot)}_H \d t < \infty\right] = 1
\]
is valid. In view of the above estimate, the mixing of $u$ and $v$ in the two components of $\cA(t,X(t))$ (cf.~\eqref{def_a}) is immaterial, so that it remains show that
\begin{equation}\label{equivalence_var_mild}
\PP\left[\int_0^T \vertii{f\left(u(t,\cdot)+\hat u(\cdot+st)\right) - f\left(\hat u(\cdot+st)\right)}_{L^2(\R)} \d t < \infty\right] = 1
\end{equation}
holds true. Indeed, we can estimate
\begin{eqnarray*}
\lefteqn{\E\left[ \int_0^T \vertii{f\left(u(t,\cdot)+\hat u(\cdot+st)\right) - f\left(\hat u(\cdot+st)\right)}_{L^2(\R)} \d t\right]^{\frac 1 2}} \\
&\stackrel{\eqref{cond_f_diff_4}}{\le}& \eta_4 \E \left[\sup_{t \in [0,T]} \vertii{u(t,\cdot)}_{L^2(\R)} \int_0^T \left(1+2\vertii{\hat u}_{L^\infty(\R)}^2+3\vertii{u(t,\cdot)}^2_{H^1(\R)}\right) \d t\right]^{\frac 1 2} \\
&\le& \frac{\eta_4}{2} \E\left[\sup_{t \in [0,T]} \vertii{u(t,\cdot)}_{L^2(\R)}\right] + \frac{\eta_4 T}{2} \left(1+2\vertii{\hat u}_{L^\infty(\R)}^2\right)  + \frac{3 \eta_4}{2} \E\left[\int_0^T \vertii{u(t,\cdot)}^2_{H^1(\R)} \d t\right]
\end{eqnarray*}
where the Sobolev embedding in form of $\vertii{u(t,\cdot)}_{L^\infty(\R)} \le \vertii{u(t,\cdot)}_{H^1(\R)}$ has been used and finiteness of the terms in the last line follows from the regularity of the variational solution stated in Definition~\ref{def:variational} and Proposition~\ref{prop:ex_var}. Hence, also \eqref{equivalence_var_mild} is satisfied.

\medskip

Next, we prove additional regularity provided $\sqrt Q \in L_2(U;H^1(\R))$, $u^{(0)} \in L^2(\R)$, and $v^{(0)} \in H^1(\R)$. From the first component of \eqref{stoch_evol} or \eqref{var_formula} we derive the mild-solution formula
\begin{eqnarray*}
t u(t,\cdot) &=& \int_0^t K_{t - t'} * \left(f\left(u(t',\cdot)+\hat u(\cdot+s t')\right) - f\left(\hat u(\cdot+st')\right) - v(t',\cdot)\right) t' \d t' \\
&&  + \int_0^t K_{t - t'} * u(t',\cdot) \, \d t' + \int_0^t K_{t-t'} * t' \d W(t',\cdot), \quad \mbox{$\PP$-almost surely},
\end{eqnarray*}
where $K_t(x) = \frac{e^{- \frac{x^2}{4 \nu t}}}{\sqrt{4 \pi \nu t}}$ denotes the heat kernel generated by $\nu \partial_x^2$ and $*$ the convolution on the real line. For the second component of \eqref{stoch_evol} or \eqref{var_formula} we get analogously
\[
v(t,\cdot) = e^{- \eps \gamma t} v^{(0)} + \eps \int_0^t e^{- \eps \gamma (t-t')} u(t',\cdot) \, \d t', \quad \mbox{$\PP$-almost surely}.
\]
Differentiation in space yields
\begin{eqnarray}
t (\partial_x u)(t,\cdot) &=& \int_0^t K_{t-t'} * \left( f'(u(t',\cdot) + \hat u(\cdot + s t')) \left(\partial_x u(t',\cdot) + \tfrac{\d \hat u}{\d\xi}(\cdot+st')\right)\right) t' \d t' \nonumber \\
&& - \int_0^t K_{t-t'} * \left(f'\left(\hat u(\cdot+st')\right) \tfrac{\d \hat u}{\d\xi}(\cdot+st') + \partial_x v(t',\cdot) \right) t' \d t' \nonumber \\
&& + \int_0^t K_{t - t'} * \partial_x u(t',\cdot) \, \d t' + \int_0^t K_{t-t'} * \left(t' \d (\partial_x W)(t',\cdot)\right), \label{reg_pr_x_u}
\end{eqnarray}
$\PP$-almost surely, and
\begin{equation}\label{rep_prx_v_u}
\partial_x v(t,\cdot) = e^{- \eps \gamma t} \left(\partial_x v^{(0)}\right) + \eps \int_0^t e^{- \eps \gamma (t-t')} (\partial_x u)(t',\cdot) \, \d t', \quad \mbox{$\PP$-almost surely.}
\end{equation}
We estimate the three lines on the right-hand side of \eqref{reg_pr_x_u} separately:

\medskip

Using $\vertii{K_{t-t'}}_{L^1(\R)} = 1$, we obtain for the first term on the right-hand side of \eqref{reg_pr_x_u} with Young's convolution inequality
\begin{eqnarray*}
\lefteqn{\vertii{\int_0^t K_{t-t'} * \left( f'(u(t',\cdot) + \hat u(\cdot + s t')) \left(\partial_x u(t',\cdot) + \tfrac{\d \hat u}{\d\xi}(\cdot+st')\right)\right) t' \d t'}_{L^2(\R)}} \\
&\stackrel{\eqref{cond_f_diff_3}}{\le}& \eta_3 \int_0^t \left(1 + 2 \vertii{\hat u}_{L^\infty(\R)}^2 + 2 \vertii{u(t',\cdot)}_{H^1(\R)}^2\right) \left(\vertii{\partial_x u(t',\cdot)}_{L^2(\R)} + \vertii{\tfrac{\d \hat u}{\d\xi}}_{L^2(\R)}\right) t' \d t' \\
&\le& \frac{\eta_3 T}{2} \left(1 + 2 \vertii{\hat u}_{L^\infty(\R)}^2\right) \left(T \vertii{\tfrac{\d \hat u}{\d\xi}}_{L^2(\R)} + \frac{2 T^{\frac 1 2}}{\sqrt 3} \vertii{\partial_x u}_{L^2([0,T] \times \R)}\right) \\
&& + 2 \eta_3 T \vertii{\tfrac{\d \hat u}{\d\xi}}_{L^2(\R)} \vertii{u}_{L^2([0,T];H^1(\R))}^2 + \frac{2 \eta_3 T^{\frac 3 2}}{\sqrt 3} \vertii{u}_{C^0([0,T];L^2(\R))}^2 \vertii{\partial_x u}_{L^2([0,T] \times \R)} \\
&& + 2 \eta_3 \int_0^t \vertii{\partial_x u(t',\cdot)}_{L^2(\R)}^2 t' \vertii{\partial_x u(t',\cdot)}_{L^2(\R)} \d t',
\end{eqnarray*}
where the Sobolev embedding on the real line in form of $\vertii{u(t',\cdot)}_{L^\infty(\R)} \le \vertii{u(t',\cdot)}_{H^1(\R)}$ has been applied.

\medskip

For the second line of \eqref{reg_pr_x_u}, we obtain similarly
\begin{eqnarray*}
\lefteqn{\vertii{\int_0^t K_{t-t'} * \left(f'\left(\hat u(\cdot+st')\right) \tfrac{\d \hat u}{\d\xi}(\cdot+st') + \partial_x v(t',\cdot) \right) t' \d t'}_{L^2(\R)}} \\
&\stackrel{\eqref{cond_f_diff_3}}{\le}& \frac{\eta_3 T^2}{2} \left(1+\vertii{\tfrac{\d \hat u}{\d\xi}}_{L^\infty(\R)}^2\right) \vertii{\tfrac{\d \hat u}{\d\xi}}_{L^2(\R)} + \frac{T^{\frac 3 2}}{\sqrt 3} \vertii{\partial_x v}_{L^2([0,T] \times \R)},
\end{eqnarray*}
where we may use
\begin{equation}\label{est_prx_v_u}
\vertii{\partial_x v}_{L^2([0,T] \times \R)} \stackrel{\eqref{rep_prx_v_u}}{\le} \frac{1}{\sqrt{2\eps\gamma}} \vertii{\partial_x v^{(0)}}_{L^2(\R)} + \frac{1}{\gamma} \vertii{\partial_x u}_{L^2([0,T] \times \R)}, \quad \mbox{$\PP$-almost surely.}
\end{equation}
\medskip

Finally, the first integral on third line of \eqref{reg_pr_x_u} yields
\[
\vertii{\int_0^t K_{t - t'} * \partial_x u(t',\cdot) \, \d t'}_{L^2(\R)} \le \sqrt T \vertii{\partial_x u}_{L^2([0,T] \times \R)},
\]
while the second integral gives with help of It\^o's isometry
\begin{align*}
\E\left[\vertii{\int_0^t K_{t-t'} * \left(t' \d (\partial_x W)(t',\cdot)\right)}_{L^2(\R)}^2\right] &\le \int_0^t (t')^2 \vertii{\left(K_{t-t'} *\right) \partial_x \sqrt Q}_{L_2(U)}^2 \d t' \\
&\le \frac{T^3}{3} \vertii{\sqrt Q}_{L_2(U;H^1(\R))}^2. 
\end{align*}
\medskip

Using $X = (u,v)^\trans \in L^2\left([0,T];V\right) \cap C^0\left([0,T];H\right)$, $\PP$-almost surely, and therefore
\[
u \in L^2\left([0,T];H^1(\R)\right) \cap C^0\left([0,T];L^2(\R)\right), \quad \mbox{$\PP$-almost surely},
\]
we conclude that there exists a constant $C < \infty$, $\PP$-almost surely, with $C \to 0$ as $T \searrow 0$, $\PP$-almost surely, such that
\[
\vertii{t (\partial_x u)(t,\cdot)}_{L^2(\R)} \le C + 2 \eta_3 \int_0^t \vertii{\partial_x u(t',\cdot)}_{L^2(\R)}^2 t' \vertii{\partial_x u(t',\cdot)}_{L^2(\R)} \d t, \quad \mbox{$\PP$-almost surely}.
\]
Gr\"onwall's inequality implies
\[
\vertii{t (\partial_x u)(t,\cdot)}_{L^2(\R)} \le C \exp\left(2 \eta_3 \int_0^t \vertii{\partial_x u(t',\cdot)}_{L^2(\R)}^2 \d t'\right) < \infty, \quad \mbox{$\PP$-almost surely,}
\]
whence $t (\partial_x u) \in L^\infty\left([0,T];L^2(\R)\right)$, $\PP$-almost surely. Furthermore, since $C \to 0$ as $T \searrow 0$, $\PP$-almost surely, $[0,T] \owns t \mapsto t \partial_x u(t,\cdot) \in L^2(\R)$ is, $\PP$-almost surely, continuous in $t = 0$.

\medskip

For proving continuity, observe that for $T \ge t_2 \ge t_1 > 0$ we have
\[
t_2 \partial_x u(t_2,\cdot) - t_1 \partial_x u(t_1,\cdot) = \left(t_2-t_1\right) \partial_x u(t_2,\cdot) + t_1 \left(\partial_x u(t_2,\cdot) - \partial_x u(t_1,\cdot)\right).
\]
Then, it follows $\left(t_2-t_1\right) \partial_x u(t_2,\cdot) \to 0$ as $t_2 \to t_1$ in $L^2(\R)$ by the reasoning before if one translates in time and uses uniqueness. For the remaining term, we derive once more from the first component of \eqref{stoch_evol} or \eqref{var_formula}
\begin{eqnarray}
\partial_x u(t_2,\cdot) - \partial_x u(t_1,\cdot) &=& K_{t_2-t_1} * (\partial_x u)(t_1,\cdot) - \partial_x u(t_1,\cdot) \nonumber \\
&& + \int_{t_1}^{t_2} K_{t_2-t'} * \left(f'\left(u(t',\cdot)+\hat u(\cdot+s t')\right) \left(\partial_x u(t',\cdot) + \tfrac{\d \hat u}{\d\xi}(\cdot+s t')\right)\right) \d t' \nonumber \\
&& - \int_{t_1}^{t_2} K_{t_2-t'} * \left(f'\left(\hat u(\cdot+st')\right) \tfrac{\d\hat u}{\d\xi}(\cdot+st') - \partial_x v(t',\cdot)\right) \d t' \nonumber \\
&& + \int_{t_1}^{t_2} K_{t_2-t'} * \d (\partial_x W)(t',\cdot), \quad \mbox{$\PP$-almost surely}. \label{cont_pr_x_u}
\end{eqnarray}
For the first line of \eqref{cont_pr_x_u} observe that
\[
\vertii{K_{t_2-t_1} * (\partial_x u)(t_1,\cdot) - \partial_x u(t_1,\cdot)}_{L^2(\R)} \to 0 \quad \mbox{as} \quad t_2 \to t_1, \quad \mbox{$\PP$-almost surely},
\]
follows from $t u \in L^\infty([0,T];L^2(\R))$, $\PP$-almost surely, and the fact that $(K_t*)_{t \ge 0} \subset L\left(L^2(\R)\right)$ is strongly continuous in $t = 0$. The other lines of \eqref{cont_pr_x_u} can be estimated as before. Altogether, we obtain
\[
\vertii{t_2 \partial_x u(t_2,\cdot) - t_1 \partial_x u(t_1,\cdot)}_{L^2(\R)} \to 0 \quad \mbox{as} \quad t_2 \to t_1, \quad \mbox{$\PP$-almost surely},
\]
which implies $t u \in C^0\left([0,T];H^1(\R)\right)$, $\PP$-almost surely. Finally, by \eqref{rep_prx_v_u} and \eqref{est_prx_v_u} it follows that $v \in C^0\left([0,T];H^1(\R)\right)$, $\PP$-almost surely.

\medskip

If $u^{(0)} \in H^1(\R)$, an analogous reasoning without time weight yields that $u \in C^0\left([0,T];H^1(\R)\right)$, $\PP$-almost surely, too.
\end{proof}

\subsection{Linearization around the traveling wave\label{sec:lin_proof}}
In this section, we give the proofs of Proposition~\ref{prop:family} and Proposition~\ref{prop:frozen}.
\subsubsection{Fixed frame\label{sec:family}}
We focus on investigating the linearized evolution generated by the family of operators $\left(\cL_{st}\right)_{t \ge 0}$ defined in \eqref{lin_op}.

\begin{proof}[Proof of Proposition~\ref{prop:family}]
Note that in view of \eqref{lin_op} we may write
\begin{equation}\label{decomp_lin_op}
\cL_{st} = \cL_{\pm\infty} + \cR_{st} \quad \mbox{with} \quad \left\{\begin{aligned} \cL_{\pm\infty} &:= \begin{pmatrix} \nu \partial_x^2 + f'(0) && - 1 \\ \eps && - \eps \gamma \end{pmatrix}, \\ \cR_{st} &:= \begin{pmatrix} f'\left(\hat u(\cdot+st)\right) - f'(0) & 0 \\ 0 & 0 \end{pmatrix}. \end{aligned}\right.
\end{equation}
For $Y = (w,q)^\trans \in \left(C_\mathrm{c}^\infty(\R)\right)^2$ we have
\begin{eqnarray*}
\inner{\cL_{\pm\infty} Y}{Y}{\cV} &=& \eps Z \inner{\nu \tfrac{\d^2 w}{\d x^2} + f'(0) w - q}{w}{H^1(\R)} + \eps Z \inner{w - \gamma q}{q}{H^1(\R)} \\
&=& \eps Z \left(- \nu \vertii{\tfrac{\d^2 w}{\d x^2}}_{L^2(\R)}^2 - \left(\nu - f'(0)\right) \vertii{\tfrac{\d w}{\d x}}_{L^2(\R)}^2 + f'(0) \vertii{w}_{L^2(\R)}^2\right) \\
&& - \eps Z \gamma \vertii{q}_{H^1(\R)}^2 \\
&\le& - \min\left\{- f'(0),\eps \gamma\right\} \vertii{Y}_{\cV}^2 - \eps Z \nu \vertii{\tfrac{\d^2 w}{\d x^2}}_{L^2(\R)}^2.
\end{eqnarray*}
By density of $\left(C_\mathrm{c}^\infty(\R)\right)^2$ in $D(\cL_{\pm\infty}) = H^3(\R) \eoperp H^1(\R)$, we infer that $\cL_{\pm\infty} + \kappa \id_{\cV}$ with
\[
\kappa := \min\left\{- f'(0),\eps \gamma\right\} \stackrel{\eqref{sign_f}}{\ge} 0
\]
is dissipative.

\medskip

In order to prove that $\cL_{\pm\infty} + (\kappa-1) \id_{\cV} \colon D\left(\cL_{\pm\infty}\right) \to \cV$ is a bijection, we define
\begin{eqnarray*}
\cM\left(Y_1,Y_2\right) &:=& - \eps Z \nu \inner{\tfrac{\d w_1}{\d x}}{\tfrac{\d w_2}{\d x}}{H^1(\R)} + \eps Z \inner{\left(f'(0) + \kappa - 1\right) w_1 - q_1}{w_2}{H^1(\R)} \\
&& + \eps Z \inner{w_1 + (\kappa - 1 - \gamma) q_1}{q_2}{H^1(\R)} \quad \mbox{for} \quad Y_j := \left(w_j,w_j\right)^\trans \in H^2(\R) \eoperp H^1(\R)
\end{eqnarray*}
and we recognize that $\cM \colon \left(H^2(\R) \eoperp H^1(\R)\right)^2 \to \cV$ is bilinear, continuous, and $-\cM$ is coercive, so that by the Lax-Milgram theorem for any $Y_2 \in \cV$ there exists $Y_1 \in H^2(\R) \eoperp H^1(\R)$ such that $\cM\left(Y_1,\tilde Y\right) = \inner{Y_2}{\tilde Y}{\cV}$ for all $\tilde Y \in H^2(\R) \eoperp H^1(\R)$. This implies $\cL_{\pm\infty} Y_1 + (\kappa-1) Y_1 = Y_2$ distributionally and hence in particular
\[
\nu \frac{\d^2 w_1}{\d x^2} = w_2 + \left(1-\kappa-f'(0)\right) w_1 + q_1 \in H^1(\R),
\]
giving $w_1 \in H^3(\R)$ and thus $Y_1 \in D\left(\cL_{\pm\infty}\right)$. The arguments have shown that $\cL_{\pm\infty} + (\kappa-1) \id_\cV \colon D\left(\cL_{\pm\infty}\right) \to \cV$ is a bijection.

\medskip

The Lumer-Philips theorem \cite[Chapter~1, Theorem~4.3]{Pazy1992} yields that $\cL_{\pm\infty} + \kappa \id_{\cV}$ generates a $C_0$-semigroup of contractions in $\cV$. Hence, $\cL_{\pm\infty}$ generates a $C_0$-semigroup $\left(e^{t \cL_{\pm\infty}}\right)_{t \ge 0}$ in $\cV$ with bound
\[
\vertii{e^{t \cL_{\pm\infty}}}_{L\left(\cV\right)} \le e^{-\kappa t}.
\]
Now, since $\hat u$ and $\tfrac{\d\hat u}{\d\xi}$ are bounded and $f \in C^1(\R)$, the family $\left(\cR_{st}\right)_{t \ge 0}$ is uniformly bounded $\cV \to \cV$ with $\vertii{\cR_{st}}_{L(\cV)} \le \vertii{f'(\hat u) - f'(0)}_{W^{1,\infty}(\R)} < \infty$. By \cite[Chapter~5, Theorem~2.3]{Pazy1992} the family $\left(\cL_{st}\right)_{t \ge 0}$ of linear operators generates an evolution family $\left(P_{st,st'}\right)_{t \ge t' \ge 0}$ of bounded linear operators $\cV \to \cV$ meeting estimate~\eqref{bound_evol_fam}, i.e.,
\[
\vertii{P_{st,st'}}_{L(\cV)} \le e^{\beta (t-t')} \quad \mbox{with} \quad \beta := \vertii{f'(\hat u) - f'(0)}_{W^{1,\infty}(\R)} - \min\left\{- f'(0),\eps \gamma\right\}. \qedhere
\]
\end{proof}

\subsubsection{Moving frame\label{sec:frozen}}
In this section, we prove spectral properties of the frozen-wave operator $\cL^\#$ (cf.~\eqref{fw_op}), as stated in Proposition~\ref{prop:frozen}.

\medskip

We view $\cL^\#$ as a perturbation of the limiting operator $\cL^\#_{\pm\infty}$ (cf.~\eqref{fw_op})
\begin{subequations}\label{decomp_frozen_op}
\begin{equation}
\cL^\# = \cL^\#_{\pm\infty} + \cR^\#,
\end{equation}
where
\begin{equation}\label{decomp_frozen_r}
\cL^\#_{\pm\infty} := \begin{pmatrix} \nu \partial_\xi^2 + f'(0) - s \partial_\xi && - 1 \\ \eps && - \eps \gamma - s \partial_\xi \end{pmatrix} \quad \mbox{and} \quad \cR^\# := \begin{pmatrix} f'\left(\hat u\right) - f'(0) & 0 \\ 0 & 0 \end{pmatrix}.
\end{equation}
\end{subequations}
%

\begin{lemma}\label{lem:dissipative}
For $Y \in D\left(\cL^\#\right) = H^2(\R;\C) \eoperp H^1(\R;\C)$ we have
\begin{equation}\label{dissipative}
\inner{\cL_{\pm\infty}^\# Y}{Y}{H_\C} \le - \kappa \vertii{Y}_{H_\C}^2 - \eps Z \nu \vertii{\tfrac{\d w}{\d\xi}}^2_{L^2(\R)} \quad \mbox{with} \quad \kappa \stackrel{\eqref{def_kappa}}{=} \min\left\{-f'(0),\eps\gamma\right\}.
\end{equation}
In particular, $\cL_{\pm\infty}^\#$ generates a $C_0$-semigroup $\left(P_{st}^{\pm\infty}\right)_{t \ge 0}$ of contractions in $H_\C$ satisfying
\begin{equation}\label{est_semigroupL0_contr}
\vertii{P^{\pm\infty}_{st}}_{L\left(H_\C\right)} \le e^{-\kappa t}.
\end{equation}
\end{lemma}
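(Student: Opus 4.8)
The plan is to obtain \eqref{dissipative} by a direct energy computation and then to invoke the Lumer--Phillips theorem \cite[Chapter~1, Theorem~4.3]{Pazy1992} for the shifted operator $\cL^\#_{\pm\infty} + \kappa\,\id_{H_\C}$, exactly along the lines of the proof of Proposition~\ref{prop:family}.

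For the energy estimate I would take $Y = (w,q)^\trans \in D(\cL^\#_{\pm\infty}) = H^2(\R;\C)\eoperp H^1(\R;\C)$, insert $\cL^\#_{\pm\infty}$ from \eqref{decomp_frozen_r} into the inner product \eqref{inner_c}, and integrate by parts in $\xi$ (all boundary terms vanish by the Sobolev regularity of $w$ and $q$). The diffusion term yields $-\eps Z\nu\vertii{\partial_\xi w}_{L^2(\R)}^2$, while the reaction and decay terms contribute $\eps Z f'(0)\vertii{w}_{L^2(\R)}^2 - \eps Z\gamma\vertii{q}_{L^2(\R)}^2$. The decisive observation is that the three remaining groups of terms, namely the two advection terms coming from $-s\partial_\xi$ and the skew coupling $-\eps\int_\R\overline q\,w + \eps\int_\R\overline w\,q$, are each purely imaginary: the identity $\int_\R\partial_\xi(\overline g\,g)=0$ forces $\int_\R(\partial_\xi\overline g)\,g\in i\R$ for $g\in\{w,q\}$, and $\int_\R\overline w\,q-\int_\R\overline q\,w = 2i\,\Im\int_\R\overline w\,q$. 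Taking real parts and using $f'(0)\le-\kappa$ (from \eqref{sign_f} and \eqref{def_kappa}) together with $\eps\gamma\ge\kappa$ then gives \eqref{dissipative}. Note that, because of these imaginary contributions, the bound is to be read for the real part of the inner product, which is precisely what Lumer--Phillips dissipativity requires.

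It remains to verify the range condition, i.e.\ surjectivity of $\mu\,\id_{H_\C} - \cL^\#_{\pm\infty}$ for some $\mu > 0$. Here I would exploit that $\cL^\#_{\pm\infty}$ has constant coefficients and pass to the Fourier variable $k$, turning it into multiplication by the symbol
\[
\hat L(k) = \begin{pmatrix} -\nu k^2 + f'(0) - isk & -1 \\ \eps & -\eps\gamma - isk \end{pmatrix}.
\]
The algebraic analogue of the computation above, carried out on $\C^2$ with the weighted inner product $(V,\tilde V)\mapsto \eps\,\overline{V_1}\tilde V_1 + \overline{V_2}\tilde V_2$, shows that the real part of $\langle\hat L(k)V,V\rangle$ is bounded by $-\kappa\verti{V}^2 - \eps\nu k^2\verti{V_1}^2$, so the corresponding numerical range of $\hat L(k)$ lies in $\{\Re z\le-\kappa\}$ uniformly in $k$. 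Hence $\mu\,\id - \hat L(k)$ is boundedly invertible for $\mu>\kappa$, with weighted norm at most $(\mu+\kappa)^{-1}$, and the quadratic growth of the $(1,1)$-entry provides exactly the decay needed so that $Y := \mathcal F^{-1}\bigl[(\mu\,\id-\hat L(k))^{-1}\mathcal F G\bigr]$ lies in $H^2(\R;\C)\eoperp H^1(\R;\C)=D(\cL^\#_{\pm\infty})$ for every $G\in H_\C$. (Alternatively, a Lax--Milgram argument on the sesquilinear form associated with $\mu\,\id - \cL^\#_{\pm\infty}$ works just as in Proposition~\ref{prop:family}.) Lumer--Phillips then yields that $\cL^\#_{\pm\infty}+\kappa\,\id$ generates a contraction $C_0$-semigroup, whence $\cL^\#_{\pm\infty}$ generates $(P^{\pm\infty}_{st})_{t\ge0}$ with $\vertii{P^{\pm\infty}_{st}}_{L(H_\C)}\le e^{-\kappa t}$, i.e.\ \eqref{est_semigroupL0_contr}.

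The main obstacle is the presence of the first-order advection terms $-s\partial_\xi$ in the complex space $H_\C$: one has to check carefully that they, together with the skew coupling between the two components, contribute nothing to the real part of the energy, and they make the surjectivity step genuinely different from the essentially self-adjoint situation of Proposition~\ref{prop:family}. This is precisely why I would lean on the constant-coefficient (Fourier-symbol) structure, where the advection becomes the harmless imaginary shift $-isk$ along the diagonal.
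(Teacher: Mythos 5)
Your proposal is correct and follows essentially the same route as the paper: the dissipativity estimate is obtained by the identical energy computation (isolating the purely imaginary contributions of the advection terms $-s\partial_\xi$ and of the skew coupling, then taking real parts and using $f'(0)\le-\kappa$, $\eps\gamma\ge\kappa$), and the semigroup bound follows from the Lumer--Phillips theorem applied to $\cL^\#_{\pm\infty}+\kappa\,\id_{H_\C}$. The only divergence is cosmetic: for the range condition the paper uses the Lax--Milgram argument from the proof of Proposition~\ref{prop:family} (which you correctly list as an alternative), whereas your primary Fourier-symbol argument is an equally valid constant-coefficient shortcut.
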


\begin{proof}
By density, we may assume $Y = \left(w,q\right)^\trans \in \left(C_\mathrm{c}^\infty(\R;\C)\right)^2$ and obtain 
\begin{eqnarray*}
\inner{\cL_{\pm\infty}^\# Y}{Y}{H_\C} &=& \eps Z \inner{\nu \tfrac{\d^2 w}{\d\xi^2} + f'(0) w - s \tfrac{\d w}{\d\xi} - q}{w}{L^2(\R;\C)} + Z \inner{\eps (w - \gamma q) - s \tfrac{\d q}{\d\xi}}{q}{L^2(\R;\C)} \\
&=& \eps Z \left(- \nu \vertii{\tfrac{\d w}{\d\xi}}^2_{L^2(\R;\C)} + f'(0) \vertii{w}^2_{L^2(\R;\C)}\right) - \eps Z \gamma \vertii{q}^2_{L^2(\R;\C)} \\
&& + 2 i \eps Z \, \Im\left(\inner{w}{q}{L^2(\R;\C)}\right) - \eps Z s \int_\R\left(\overline{\tfrac{\d w}{\d\xi}}\right) w \, \d\xi - Z s \int_\R \left(\overline{\tfrac{\d q}{\d\xi}}\right) q \, \d\xi.
\end{eqnarray*}
This implies
\begin{eqnarray*}
\Re \left(\inner{\cL_{\pm\infty}^\# Y}{Y}{H_\C}\right) &\stackrel{\eqref{inner_c}}{\le}& - \min\left\{-f'(0),\eps\gamma\right\} \vertii{Y}_{H_\C}^2 - \frac{Z s}{2} \int_\R \tfrac{\d}{\d\xi} \left(\eps \verti{w}^2 + \verti{q}^2\right) \d\xi \\
&& - \eps Z \nu \vertii{\tfrac{\d w}{\d\xi}}^2_{L^2(\R)}\\
&=& - \kappa \vertii{Y}_{H_\C}^2 - \eps Z \nu \vertii{\tfrac{\d w}{\d\xi}}^2_{L^2(\R)},
\end{eqnarray*}
which is \eqref{dissipative}. From \eqref{dissipative}, we recognize that
\[
\cL_{\pm\infty}^\# + \kappa \id_{H_\C} \colon D\left(\cL^\#\right) = H^2(\R;\C) \eoperp H^1(\R;\C) \to H_\C \stackrel{\eqref{hilbert_c}}{=} L^2(\R;\C) \eoperp L^2(\R;\C)
\]
is dissipative, so that, as in the proof of Proposition~\ref{prop:family}, we deduce with the Lax-Milgram and the Lumer-Philips theorem \cite[Chapter~1, Theorem~4.3]{Pazy1992} that $\cL_{\pm\infty}^\#$ generates a $C_0$-semigroup of contractions $(P^{\pm\infty}_{st})_{t\geq 0}$ in $H_\C$ meeting \eqref{est_semigroupL0_contr}.
\end{proof}

\begin{proof}[Proof of Proposition~\ref{prop:frozen}~\eqref{item:frozen_semi}]
Since $\cR^\#$ is a bounded operator in $H_\C$ with
\[
\vertii{\cR^\#}_{L\left(H_\C\right)} \le \vertii{f'\left(\hat u\right) - f'(0)}_{L^\infty(\R)},
\]
we conclude with Lemma~\ref{lem:dissipative} and \cite[Chapter~5, Theorem~2.3]{Pazy1992} that $\cL^\#$ generates a $C_0$-semigroup $\left(P^\#_{st}\right)_{t\geq0}$ in $H_\C$ meeting the bound
\begin{equation}\label{semi_fw}
\vertii{P_{st}^\#}_{L\left(H_\C\right)} \le e^{- \rho t} \quad \mbox{with} \quad \rho := \kappa - \vertii{f'\left(\hat u\right) - f'(0)}_{L^\infty(\R)}.
\end{equation}
Since $\cL^\#$ has real coefficients (cf.~\eqref{fw_op}), the statement for $H$ instead of $H_\C$ is immediate.
\end{proof}

\begin{proof}[Proof of Proposition~\ref{prop:frozen}~\eqref{item:semi_2}]
We have
\[
\left(\partial_t - \cL^\#\right) \cT_{-st} P_{st,st'} \cT_{st'} \stackrel{\eqref{lst_lf}}{=} \cT_{-st} \left(\partial_t - \cL_{st}\right) P_{st,st'} \cT_{st'} = 0,
\]
so that with
\[
\left. \cT_{-st} P_{st,st'} \cT_{st'} \right|_{t = t'} = \cT_{-st'} \id_\cV \cT_{st'} = \left. \cT_{-st'} \cT_{st'} \right|_\cV = \id_\cV
\]
the claim follows by uniqueness of the evolution family.
\end{proof}

Note that \eqref{semi_fw} just provides a rough estimate on the action of the semigroup $\left(P_{st}^\#\right)_{t \ge 0}$. In what follows, we provide a more detailed spectral analysis of $\cL^\#$, leading to the proofs of Proposition~\ref{prop:frozen}~\eqref{item:spectrum} and \eqref{item:riesz}.

\begin{lemma}\label{lem:spec_approx}
We have
\[
\sigma_\mathrm{ess}(\cL_{\pm\infty}^\#) \subseteq \left\{\lambda \in \C \colon \Re \lambda \le - \kappa\right\}, \quad \mbox{where} \quad \kappa \stackrel{\eqref{def_kappa}}{=} \min\left\{-f'(0),\eps\gamma\right\}.
\]
In particular, $\sigma_\mathrm{ess}(\cL_{\pm\infty}^\#)$ lies to the left of the imaginary axis.
\end{lemma}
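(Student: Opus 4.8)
The plan is to exploit that $\cL^\#_{\pm\infty}$ has \emph{constant coefficients} (the zeroth-order term in the first component is the constant $f'(0)$, cf.~\eqref{decomp_frozen_r}), so that its spectrum is directly accessible through the semigroup already produced in Lemma~\ref{lem:dissipative}. I would run the short semigroup argument as the main line and mention the Fourier picture only as a cross-check. The key point is that for the essential-spectrum inclusion it suffices to control the full spectrum, because $\sigma_\mathrm{ess}(\cL^\#_{\pm\infty}) \subseteq \sigma(\cL^\#_{\pm\infty})$: indeed, if $\lambda \in \rho(\cL^\#_{\pm\infty})$ then $\lambda\id_{H_\C} - \cL^\#_{\pm\infty}$ is invertible, hence Fredholm of index $0$, so $\lambda \notin \sigma_\mathrm{ess}(\cL^\#_{\pm\infty})$ by Definition~\ref{def:spectrum}~\eqref{item:essential}.

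First I would recall from Lemma~\ref{lem:dissipative} that $\cL^\#_{\pm\infty}$ generates a $C_0$-semigroup $(P^{\pm\infty}_{st})_{t \ge 0}$ on $H_\C$ with the contraction-type bound \eqref{est_semigroupL0_contr}, i.e.\ $\vertii{P^{\pm\infty}_{st}}_{L(H_\C)} \le e^{-\kappa t}$. Then I would invoke the standard relationship between the growth bound of a $C_0$-semigroup and the location of the spectrum of its generator: for $\Re\lambda > -\kappa$ the resolvent is represented by the convergent Laplace transform $(\lambda\id_{H_\C} - \cL^\#_{\pm\infty})^{-1} = \int_0^\infty e^{-\lambda t} P^{\pm\infty}_{st}\,\d t$, which defines a bounded operator satisfying $\vertii{(\lambda\id_{H_\C} - \cL^\#_{\pm\infty})^{-1}}_{L(H_\C)} \le (\Re\lambda + \kappa)^{-1}$. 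This shows $\{\lambda \in \C : \Re\lambda > -\kappa\} \subseteq \rho(\cL^\#_{\pm\infty})$, and therefore $\sigma(\cL^\#_{\pm\infty}) \subseteq \{\lambda \in \C : \Re\lambda \le -\kappa\}$.

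Combining the two inclusions yields $\sigma_\mathrm{ess}(\cL^\#_{\pm\infty}) \subseteq \sigma(\cL^\#_{\pm\infty}) \subseteq \{\lambda \in \C : \Re\lambda \le -\kappa\}$, and since $\kappa \stackrel{\eqref{def_kappa}}{=} \min\{-f'(0),\eps\gamma\} > 0$ by \eqref{sign_f}, this half-plane lies strictly to the left of the imaginary axis, as claimed.

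Because Lemma~\ref{lem:dissipative} already supplies the contraction bound, I do not expect a genuine obstacle here; the only step requiring care is the correct invocation of the resolvent/Laplace-transform identity on the half-plane $\Re\lambda > -\kappa$. As an alternative that also pins down the essential spectrum explicitly (and serves as a sanity check), one may Fourier transform the constant-coefficient operator: the substitution $\partial_\xi \mapsto ik$ turns $\cL^\#_{\pm\infty}$ into the matrix-valued multiplier $\hat{\cL}^\#_{\pm\infty}(k)$ with $\sigma_\mathrm{ess}(\cL^\#_{\pm\infty}) = \overline{\bigcup_{k\in\R}\sigma(\hat{\cL}^\#_{\pm\infty}(k))}$, and a direct check that both eigenvalue branches of the $2\times 2$ symbol have real part at most $-\kappa$ for every $k\in\R$ reproduces the same bound. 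I would keep the semigroup argument as the actual proof for brevity.
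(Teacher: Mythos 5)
Your proof is correct, but it takes a genuinely different route from the paper's. The paper's argument is a one-liner: it invokes the inclusion $\sigma_\mathrm{ess}(\cL^\#_{\pm\infty}) \subseteq \overline{\cR_{H_\C}(\cL^\#_{\pm\infty})}$ of the essential spectrum in the (closure of the) numerical range (citing \cite[Lemma~4.1.9]{KapitulaPromislow2013} together with Definition~\ref{def:spectrum}~\eqref{item:essential} and \eqref{item:numeric}), and then reads off the half-plane bound directly from the dissipativity estimate \eqref{dissipative} of Lemma~\ref{lem:dissipative}, which says precisely that the numerical range lies in $\{\Re\lambda \le -\kappa\}$. You instead use the cruder inclusion $\sigma_\mathrm{ess} \subseteq \sigma$ (correctly justified via the Fredholm characterization in Definition~\ref{def:spectrum}~\eqref{item:essential}) and locate the \emph{entire} spectrum in the half-plane by the standard Laplace-transform representation of the resolvent for a semigroup with growth bound $e^{-\kappa t}$. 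Both arguments ultimately rest on the same dissipativity estimate, since the contraction bound \eqref{est_semigroupL0_contr} is itself a consequence of \eqref{dissipative} via Lumer--Phillips; your route trades the numerical-range lemma for the (equally standard) resolvent integral $\int_0^\infty e^{-\lambda t}P^{\pm\infty}_{st}\,\d t$. What the numerical-range approach buys in general is potentially sharper localization (the numerical range can be a proper subset of the half-plane), though here both yield the identical conclusion; what your approach buys is independence from the specific essential-spectrum-versus-numerical-range lemma, at the cost of a slightly longer chain. Your Fourier-multiplier cross-check is also legitimate for this constant-coefficient operator and would in fact pin down the essential spectrum exactly, but is not needed. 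The only stylistic caveat is that $\kappa>0$ (strictly, by \eqref{sign_f} and $\eps,\gamma>0$) is what makes the final sentence of the lemma meaningful, and you do address this.
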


\begin{proof}
We first use that $\sigma_\mathrm{ess}\left(\cL_{\pm\infty}^\#\right) \subseteq \cR_{H_\C}\left(\cL_{\pm\infty}^\#\right)$ (cf.~Definition~\ref{def:spectrum}~\eqref{item:essential} and \eqref{item:numeric}, and \cite[Lemma~4.1.9]{KapitulaPromislow2013}). Then the result is immediate from \eqref{sign_f}, Definition~\ref{def:spectrum}~\eqref{item:numeric}, and Lemma~\ref{lem:dissipative}.
\end{proof}

Lemma~\ref{lem:spec_approx} can be lifted to obtain the essential spectrum of $\cL^\#$ using the following compactness argument:

\begin{lemma}[compactness]\label{lem:compact}
The operator (cf.~\eqref{decomp_frozen_r})
\[
\cR^\# \colon H_\C \supset H^1(\R;\C) \eoperp L^2(\R;\C) \to H_\C
\]
is compact. In particular,
\[
\cL^\# \stackrel{\eqref{decomp_frozen_op}} = \cL^\#_{\pm\infty} + \cR^\# \colon H_\C \supset D(\cL^\#) = H^2(\R;\C) \eoperp H^1(\R;\C) \to H_\C
\]
is a relatively compact perturbation of $\cL^\#_{\pm\infty} \colon H_\C \supset D(\cL^\#) \to H_\C$.
\end{lemma}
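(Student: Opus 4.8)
The plan is to prove that $\cR^\#$ is compact as a map $H^1(\R;\C) \eoperp L^2(\R;\C) \to H_\C$, from which the relative compactness of the perturbation follows immediately: since $D(\cL^\#) = H^2(\R;\C) \eoperp H^1(\R;\C)$ embeds continuously into $H^1(\R;\C) \eoperp L^2(\R;\C)$, the restriction of $\cR^\#$ to $D(\cL^\#)$ is a composition of a bounded embedding with a compact operator, hence compact. Recalling \eqref{decomp_frozen_r}, the operator $\cR^\#$ acts only on the first component via multiplication by the function $g := f'(\hat u) - f'(0)$ and sends the second component to $0$; so everything reduces to showing that the multiplication operator $w \mapsto g\, w$ is compact from $H^1(\R;\C)$ into $L^2(\R;\C)$.

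First I would record the decay of the multiplier. By Global Assumption~\ref{ass:reaction} we have $f \in C^3(\R)$, and by the discussion following Global~Assumptions~\ref{ass:reaction} the profile $\hat u$ and its derivative decay exponentially as $\verti{\xi} \to \infty$ (hyperbolicity of the rest state at $0$). Since $g(\xi) = f'(\hat u(\xi)) - f'(0)$ and $f' \in C^2$ with $f'(\hat u(\xi)) \to f'(0)$ as $\verti{\xi} \to \infty$, a mean-value estimate $\verti{g(\xi)} \le \vertii{f''}_{L^\infty} \verti{\hat u(\xi)}$ gives that $g$ itself decays exponentially; in particular $g \in L^\infty(\R) \cap L^2(\R)$ and $g(\xi) \to 0$ as $\verti{\xi} \to \infty$. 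This decay is the crucial structural input that makes the multiplier compact even though $\R$ is unbounded.

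The main step is then a standard but essential compactness argument for multiplication operators with a decaying, bounded symbol. I would take a bounded sequence $(w_n)$ in $H^1(\R;\C)$ and show $(g\, w_n)$ has a convergent subsequence in $L^2(\R;\C)$. Passing to a weakly convergent subsequence $w_n \rightharpoonup w$ in $H^1$, the Rellich--Kondrachov theorem gives $w_n \to w$ strongly in $L^2_{\mathrm{loc}}$, i.e. on every bounded interval $[-M,M]$. To control the tails, I would split
\[
\vertii{g (w_n - w)}_{L^2(\R)}^2 = \vertii{g (w_n - w)}_{L^2(-M,M)}^2 + \vertii{g (w_n - w)}_{L^2(\verti{\xi} > M)}^2,
\]
bounding the first term by $\vertii{g}_{L^\infty}^2 \vertii{w_n - w}_{L^2(-M,M)}^2 \to 0$ (local strong convergence) and the second by $\left(\sup_{\verti{\xi} > M} \verti{g(\xi)}\right)^2 \sup_n \vertii{w_n - w}_{L^2(\R)}^2$, which is uniformly small for $M$ large by the decay of $g$ together with the uniform $H^1$-bound on $(w_n)$. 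Choosing $M$ large first and then $n$ large yields $g\, w_n \to g\, w$ in $L^2$, proving compactness.

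The hard part is conceptually modest here — the only real obstacle is the noncompactness of the domain $\R$, which is precisely what the tail estimate handles via the exponential decay of $g$; absent that decay the multiplication operator would fail to be compact. I would therefore make sure the decay of $g$ (equivalently, of $\hat u$) is invoked explicitly rather than merely its boundedness. Everything else, including the reduction from $\cR^\#$ on $H^2 \eoperp H^1$ to the scalar multiplier on $H^1$ and the final sentence about relative compactness, is routine and follows by composing with continuous embeddings.
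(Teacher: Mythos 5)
Your main argument for the compactness of the multiplication operator $w \mapsto \left(f'(\hat u)-f'(0)\right)w$ from $H^1(\R;\C)$ to $L^2(\R;\C)$ is correct, but it takes a genuinely different route from the paper. You use the classical splitting argument: pass to a weakly convergent subsequence, apply Rellich--Kondrachov on bounded intervals $[-M,M]$ for the local part, and control the tails uniformly in $n$ via the decay of the multiplier $g := f'(\hat u)-f'(0)$. The paper instead compactifies the line via $y = \arctan\xi$ and applies Rellich--Kondrachov once on $\left(-\tfrac\pi2,\tfrac\pi2\right)$ to the weighted functions $\phi_n(y) = \sqrt{1+\xi^2}\, g(\xi)\, w_n(\xi)$, using the exponential decay of $g$ and of $f''(\hat u)\tfrac{\d\hat u}{\d\xi}$ to absorb the polynomial weights, and then transfers the conclusion back through the identity $\vertii{\phi_n-\phi_m}_{L^2\left(\left(-\tfrac\pi2,\tfrac\pi2\right);\C\right)} = \vertii{g\,(w_n-w_m)}_{L^2(\R;\C)}$. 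Both arguments hinge on the same structural input (the decay of $g$, which you correctly identify and justify); yours is the more standard textbook proof and in fact needs only $g \in L^\infty$ with $g(\xi)\to 0$ as $\verti{\xi}\to\infty$, whereas the paper's change of variables packages the local compactness and the tail control into a single application of Rellich--Kondrachov at the price of requiring genuine (here exponential) decay to dominate the weights.

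There is, however, a gap in your deduction of the ``in particular'' part. Relative compactness of the perturbation means that $\cR^\#$ is compact from $D(\cL^\#_{\pm\infty})$ equipped with the \emph{graph norm} $\vertii{Y}_{H_\C} + \vertii{\cL^\#_{\pm\infty}Y}_{H_\C}$ into $H_\C$; this is the hypothesis needed for Weyl's essential spectrum theorem as it is used afterwards. Your reduction (``composition of a bounded embedding with a compact operator'') tacitly equips $D(\cL^\#) = H^2(\R;\C) \eoperp H^1(\R;\C)$ with its Sobolev norm rather than the graph norm, so to close the argument you must additionally show that any sequence $(Y_n)_n$ with $\sup_n\left(\vertii{Y_n}_{H_\C} + \vertii{\cL^\#_{\pm\infty}Y_n}_{H_\C}\right) < \infty$ has first components bounded in $H^1(\R;\C)$. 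The paper supplies exactly this via the interpolation estimate $\vertii{\cL^\#_{\pm\infty}Y}_{H_\C}^2 \ge C_1 \vertii{\tfrac{\d w}{\d\xi}}_{L^2(\R;\C)}^2 - C_2\left(\vertii{w}_{L^2(\R;\C)}^2 + \vertii{q}_{L^2(\R;\C)}^2\right)$. The needed estimate is true and elementary, so the gap is fixable, but as written your final sentence does not follow from what precedes it.
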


\begin{proof}
Though the proof is standard, for the sake of a self-contained presentations we provide all necessary details. Suppose that $\left(Y_n\right)_n \in \left(H^1(\R;\C) \eoperp L^2(\R;\C)\right)^\N$ with $Y_n = \left(w_n,q_n\right)^\trans$ and $\sup_{n \in \N} \vertii{w_n}_{H^1(\R;\C)} < \infty$. Because of
\[
\cR^\# Y_n \stackrel{\eqref{decomp_frozen_r}}{=} \begin{pmatrix} \left(f'(\hat u) - f'(0)\right) w_n \\ 0 \end{pmatrix},
\]
we first prove that $\left(\left(f'(\hat u) - f'(0)\right) w_n\right)_n$ has a convergent subsequence in $L^2(\R;\C)$. Therefore, note that $f''(\hat u(\xi)) \tfrac{\d\hat u}{\d\xi}$ and $\verti{f'(\hat u(\xi)) - f'(0)} \le \sup_{w \in [0,\hat u(\xi)] \cup [\hat u(\xi),0]} \verti{f''(w)} \verti{\hat u(\xi)}$ decay exponentially as $\verti{\xi} \to \pm\infty$. Now, setting $y := \arctan\xi$ and $\phi_n(y) := \sqrt{1+\xi^2} \left(f'(\hat u(\xi)) - f'(0)\right) w_n(\xi)$, we may compute that $\d y = \frac{1}{1+\xi^2} \, \d\xi$, $\tfrac{\d}{\d y} = \left(1+\xi^2\right) \tfrac{\d}{\d\xi}$, 
\begin{eqnarray*}
\tfrac{\d \phi_n}{\d y} &=& \xi \sqrt{1+\xi^2} \left(f'(\hat u(\xi)) - f'(0)\right) w_n(\xi) + \left(1+\xi^2\right)^{\frac 3 2} f''(\hat u(\xi)) \tfrac{\d\hat u}{\d\xi}(\xi) w_n(\xi) \\
&& + \left(1+\xi^2\right)^{\frac 3 2} \left(f'(\hat u(\xi)) - f'(0)\right) \tfrac{\d w_n}{\d\xi}(\xi),
\end{eqnarray*}
and therefore
\[
\vertii{\phi_n}_{H^1\left(\left(-\frac \pi 2,\frac \pi 2\right);\C\right)}^2 = \int_{-\frac \pi 2}^{\frac \pi 2} \left(\verti{\phi_n}^2 + \verti{\tfrac{\d\phi_n}{\d y}}^2\right) \d y
\le C \int_\R \left(\verti{w_n}^2 + \verti{\tfrac{\d w_n}{\d\xi}}^2\right) \d\xi = C \vertii{w_n}_{H^1(\R;\C)}^2
\]
for an $n$-independent constant $C < \infty$. By the Rellich-Kondrachov theorem, $(\phi_n)_n$ has a convergent subsequence in $L^2\left(\left(-\tfrac \pi 2, \tfrac \pi 2\right);\C\right)$ and because
\[
\vertii{\phi_n - \phi_m}_{L^2\left(\left(-\tfrac \pi 2, \tfrac \pi 2\right);\C\right)} = \vertii{\left(f'(\hat u(\xi)) - f'(0)\right) \left(w_n-w_m\right)}_{L^2(\R;\C)},
\]
we infer that $\left( \left(f'(\hat u(\xi)) - f'(0)\right) w_n\right)_n$ has a convergent subsequence in $L^2(\R;\C)$.

\medskip

Suppose that $(Y_n)_n \in \left(D(\cL^\#)\right)^\N$ meets $\sup_{n \in \N} \left(\vertii{Y_n}_{H_\C} + \vertii{\cL_{\pm\infty}^\# Y_n}_{H_\C}\right) < \infty$. Then, it suffices to show that $\sup_{n \in \N} \vertii{w_n}_{H^1(\R;\C)} < \infty$. Observe that by interpolation, we have for $Y = \left(w,q\right)^\trans \in \left(C_\mathrm{c}^\infty(\R;\C)\right)^2$
\begin{eqnarray*}
\vertii{\cL^\#_{\pm\infty} Y}^2_{H_\C} &=&  \eps \vertii{\nu \tfrac{\d^2 w}{\d\xi^2} + f'(0) w - s \tfrac{\d w}{\d\xi} - q}^2_{L^2(\R;\C)} + \vertii{\eps w - \eps \gamma q - s \tfrac{\d q}{\d\xi}}^2_{L^2(\R;\C)} \\
&\ge& C_1 \vertii{\tfrac{\d w}{\d\xi}}_{L^2(\R;\C)}^2 - C_2 \left(\vertii{w}_{L^2(\R;\C)}^2 + \vertii{q}_{L^2(\R;\C)}^2\right),
\end{eqnarray*}
with $C_1 > 0$ and $C_2 < \infty$ independent of $n$. Since $\vertii{Y_n}_{H_\C}^2 \stackrel{\eqref{inner_c}}{=} \eps Z \vertii{w_n}_{L^2(\R;\C)}^2 + Z \vertii{q_n}_{L^2(\R;\C)}^2$, we may conclude with \eqref{sign_f} that indeed $\sup_{n \in \N} \vertii{w_n}_{H^1(\R;\C)} < \infty$ holds true.
\end{proof}

\begin{proof}[Proof of Proposition~\ref{prop:frozen}~\eqref{item:essential-frozen}]
If
\begin{equation}\label{id_ess}
\sigma_\mathrm{ess}(\cL^\#) = \sigma_\mathrm{ess}(\cL_{\pm\infty}^\#)
\end{equation}
holds true, it follows in particular that
\[
\sigma_\mathrm{ess}(\cL^\#) \subseteq \left\{\lambda \in \C \colon \Re \lambda \le - \kappa\right\}, \quad \mbox{where} \quad \kappa \stackrel{\eqref{def_kappa}}{=} \min\left\{-f'(0),\eps\gamma\right\}
\]
by Lemma~\ref{lem:spec_approx}. The equality \eqref{id_ess}, on the other hand, follows by Weyl's essential spectrum theorem \cite[Theorem~2.2.6]{KapitulaPromislow2013} because the operators
\[
\cL^\#, \cL^\#_{\pm\infty} \colon D\left(\cL^\#\right) = H^2(\R;\C) \eoperp H^1(\R;\C) \to H_\C \stackrel{\eqref{hilbert_c}}{=} L^2(\R;\C) \eoperp L^2(\R;\C)
\]
are closed (which follows by an interpolation argument as in the proof of Lemma~\ref{lem:compact}) and $\cL^\#$ is a compact pertubation of $\cL^\#$ by Lemma~\ref{lem:compact}.
\end{proof}

\begin{proof}[Proof of Proposition~\ref{prop:frozen}~\eqref{item:point}]
It suffices to prove that for $\lambda \in \sigma_\mathrm{p}(\cL^\#)$ any corresponding eigenvector $Y = (w,q)^\trans \in D(\cL^\#)$ is bounded and has infinitely many bounded derivatives to conclude that in $H_\C$ there are no additional eigenvalues compared to the ones obtained by Yanagida~\cite[\S5.1]{Yanagida1985} (cf.~\cite[\S1, Theorem]{Jones1984} in the case of the cubic polynomial without cut off).

\medskip

Indeed, for $Y$ as above satisfying $\cL^\# Y = \lambda Y$, we have
\begin{align*}
\nu \tfrac{\d^2 w}{\d\xi^2} &\stackrel{\eqref{fw_op}}{=} \underbrace{\lambda w - f'(\hat u) w +s \tfrac{\d w}{\d\xi} + q}_{\in H^1(\R;\C)}, \\
s \tfrac{\d q}{\d\xi} &\stackrel{\eqref{fw_op}}{=} \underbrace{ -\lambda q + \eps w - \eps \gamma q}_{\in H^1(\R;\C)},
\end{align*}
which immediately yields $w \in H^3(\R;\C)$ and $q \in H^2(\R;\C)$. Inductively, we obtain that $w,q \in H^k(\R;\C)$ for any $k \in \N$, giving smoothness and boundedness of $w$, $q$, and all derivatives.

\medskip

With the stability analysis in~\cite[\S5.1]{Yanagida1985} we conclude that, except for the simple eigenvalue $0$, all eigenvalues are to the left of the imaginary axis with real part bounded by $\lambda^*(\eps)$. We further remark that in \eqref{frozen_tw_0} it has already been noted that $\frac{\d \hat X}{\d \xi}$ is an eigenvector to the eigenvalue $0$.
\end{proof}

In what follows, we need a regularizing effect of the semigroup $\left(P_{st}^{\pm\infty}\right)_{t \ge 0}$ of Lemma~\ref{lem:dissipative} in the first component.
\begin{lemma}\label{lem:reg_frozen}
For $Y^{(0)} = \left(w^{(0)}, q^{(0)}\right)^\trans \in H_\C$ we write $Y_{\pm\infty}(t,\cdot) := \left(w_{\pm\infty}(t,\cdot), q_{\pm\infty}(t,\cdot)\right)^\trans := P_{st}^{\pm\infty} Y^{(0)}$. Then, it holds $w_{\pm\infty}, q_{\pm\infty}, \frac{\sqrt t \partial_\xi w_{\pm\infty}}{1+\sqrt t} \in L^\infty\left([0,\infty);L^2(\R;\C)\right)$ with
\[
\vertii{w_{\pm\infty}}_{L^\infty\left([0,\infty);L^2(\R;\C)\right)} + \vertii{q_{\pm\infty}}_{L^\infty\left([0,\infty);L^2(\R;\C)\right)} + \vertii{\frac{\sqrt t \partial_\xi w_{\pm\infty}}{1+\sqrt t}}_{L^\infty\left([0,\infty);L^2(\R;\C)\right)} \le C \vertii{Y^{(0)}}_{H_\C},
\]
where $C < \infty$ is independent of $Y^{(0)}$.
\end{lemma}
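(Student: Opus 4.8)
The plan is to exploit the explicit structure of $\cL^\#_{\pm\infty}$, whose first component is a constant-coefficient operator driven by the diffusion $\nu\partial_\xi^2$ together with the drift $-s\partial_\xi$ and the reaction coefficient $f'(0)<0$. First I would establish the two uniform-in-time bounds on $w_{\pm\infty}$ and $q_{\pm\infty}$ in $L^2$, which are immediate from the contraction estimate \eqref{est_semigroupL0_contr} of Lemma~\ref{lem:dissipative}: since $\vertii{P^{\pm\infty}_{st}}_{L(H_\C)}\le e^{-\kappa t}\le 1$ and the $H_\C$-norm controls each component via \eqref{inner_c} (recall $\kappa\ge 0$ by \eqref{sign_f}), we get
\[
\sqrt{\eps Z}\,\vertii{w_{\pm\infty}(t,\cdot)}_{L^2(\R;\C)}+\sqrt{Z}\,\vertii{q_{\pm\infty}(t,\cdot)}_{L^2(\R;\C)}\le \vertii{Y_{\pm\infty}(t,\cdot)}_{H_\C}\le \vertii{Y^{(0)}}_{H_\C},
\]
uniformly in $t$. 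This yields the first two terms of the claimed estimate.

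The genuine content is the parabolic smoothing bound on $\partial_\xi w_{\pm\infty}$ with the weight $\sqrt t/(1+\sqrt t)$, which interpolates between the short-time behaviour $\sqrt t\,\partial_\xi w_{\pm\infty}$ and the long-time behaviour $\partial_\xi w_{\pm\infty}$. I would derive this from an energy identity: differentiating $\frac{\d}{\d t}\tfrac12\inner{Y_{\pm\infty}}{Y_{\pm\infty}}{H_\C}$ and using the refined dissipativity \eqref{dissipative} of Lemma~\ref{lem:dissipative}, which retains the \emph{full} Dirichlet term $-\eps Z\nu\vertii{\partial_\xi w_{\pm\infty}}_{L^2}^2$ rather than discarding it, one obtains
\[
\tfrac{\d}{\d t}\vertii{Y_{\pm\infty}(t,\cdot)}_{H_\C}^2\le -2\eps Z\nu\vertii{\partial_\xi w_{\pm\infty}(t,\cdot)}_{L^2(\R;\C)}^2,
\]
after dropping the nonnegative $\kappa$-term. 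Multiplying by the time weight $t$ and integrating, or equivalently estimating $\int_0^T t\,\vertii{\partial_\xi w_{\pm\infty}}_{L^2}^2\,\d t$ against $\vertii{Y^{(0)}}_{H_\C}^2$, gives the short-time smoothing $\sqrt t\,\partial_\xi w_{\pm\infty}\in L^2$ in a space-time sense; upgrading this to the pointwise-in-$t$ $L^\infty_t L^2_\xi$ bound requires a standard parabolic argument (e.g. testing the equation for $\partial_\xi w_{\pm\infty}$ against itself, or using the analyticity of the heat semigroup on the first component in the moving frame).

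The cleanest route, and the one I would actually carry out, is to solve the decoupled-at-leading-order system explicitly. Since the second component of $\cL^\#_{\pm\infty}$ is the transport-plus-damping operator $-\eps\gamma-s\partial_\xi$ acting on $q$ forced by $\eps w$, while the first is $\nu\partial_\xi^2+f'(0)-s\partial_\xi$ acting on $w$ forced by $-q$, I would use the variation-of-constants representation in the moving frame: the $w$-equation is a heat equation perturbed by the bounded zeroth-order coefficient $f'(0)$ and the transport $-s\partial_\xi$ (which only translates and hence preserves $L^2$-norms), forced by $-q$. Writing $w_{\pm\infty}(t)=e^{t(\nu\partial_\xi^2+f'(0)-s\partial_\xi)}w^{(0)}-\int_0^t e^{(t-\tau)(\nu\partial_\xi^2+f'(0)-s\partial_\xi)}q_{\pm\infty}(\tau)\,\d\tau$ and applying the standard heat-kernel gradient bound $\vertii{\partial_\xi e^{t\nu\partial_\xi^2}g}_{L^2}\le C t^{-1/2}\vertii{g}_{L^2}$ together with $f'(0)<0$, the homogeneous part contributes a term bounded by $C t^{-1/2}\vertii{w^{(0)}}_{L^2}$, while the Duhamel integral—using the already-established uniform bound on $q_{\pm\infty}$—contributes $C\int_0^t (t-\tau)^{-1/2}\,\d\tau\,\sup_\tau\vertii{q_{\pm\infty}(\tau)}_{L^2}\le C\sqrt t\,\vertii{Y^{(0)}}_{H_\C}$. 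Multiplying through by $\sqrt t/(1+\sqrt t)$ exactly cancels the singular $t^{-1/2}$ at the origin and the linearly-growing $\sqrt t$ at infinity, yielding the uniform-in-$t$ bound.

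The main obstacle I anticipate is the interplay between short-time and long-time control: the pure gradient estimate $\vertii{\partial_\xi w_{\pm\infty}(t)}_{L^2}\lesssim t^{-1/2}\vertii{Y^{(0)}}_{H_\C}$ blows up as $t\searrow 0$ and the Duhamel term grows like $\sqrt t$ as $t\to\infty$, so neither bound alone suffices; the weight $\sqrt t/(1+\sqrt t)$ is precisely engineered to tame both ends simultaneously, and the care lies in verifying that the transport term $-s\partial_\xi$ does not destroy the gradient estimate (it does not, since it commutes with $\partial_\xi^2$ and generates an $L^2$-isometry) and that $f'(0)<0$ only improves the decay. Once these two regimes are glued, the three bounds combine into the stated inequality with an $Y^{(0)}$-independent constant $C$.
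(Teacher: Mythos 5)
Your first step (the uniform $L^2$ bounds on $w_{\pm\infty}$ and $q_{\pm\infty}$ from the contraction estimate \eqref{est_semigroupL0_contr} and the component-wise control afforded by \eqref{inner_c}) is correct, and your Duhamel strategy for the gradient is a genuinely different route from the paper's: the paper stays entirely within energy methods, first integrating the dissipation inequality \eqref{dissipative} in time to obtain $\int_0^t\vertii{\partial_\xi w_{\pm\infty}(t',\cdot)}_{L^2(\R;\C)}^2\,\d t'\lesssim\vertii{Y^{(0)}}_{H_\C}^2$, and then testing the $\xi$-differentiated $w$-equation against $t\,\partial_\xi w_{\pm\infty}$ to convert this space--time bound into the pointwise-in-time weighted bound (after a preliminary reduction to smooth compactly supported data and a density argument). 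Your variation-of-constants argument instead uses the heat-kernel smoothing $\vertii{\partial_\xi e^{t\nu\partial_\xi^2}}_{L(L^2(\R;\C))}\lesssim t^{-1/2}$, avoids second derivatives of $w_{\pm\infty}$ altogether, and applies directly to $L^2$ data.

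There is, however, a concrete error in your final bookkeeping. The weight $\sqrt t/(1+\sqrt t)$ is increasing and tends to $1$ as $t\to\infty$; it tames the $t^{-1/2}$ singularity of the homogeneous term at $t=0$, but it cannot ``cancel the linearly-growing $\sqrt t$ at infinity'': indeed $\tfrac{\sqrt t}{1+\sqrt t}\cdot\sqrt t=\tfrac{t}{1+\sqrt t}\to\infty$. As written, your estimate of the Duhamel term therefore does not close. The fix lies inside your own setup: since $e^{t(\nu\partial_\xi^2+f'(0)-s\partial_\xi)}=e^{f'(0)t}\,e^{t\nu\partial_\xi^2}\,\cT_{-st}$ with $\cT_{-st}$ an $L^2$-isometry and $f'(0)<0$ by \eqref{sign_f}, you must keep the exponential factor in the gradient bound for the Duhamel kernel; then the forced contribution is controlled by $C\,\sup_\tau\vertii{q_{\pm\infty}(\tau,\cdot)}_{L^2(\R;\C)}\int_0^t e^{f'(0)r}r^{-1/2}\,\d r\le C\sqrt{\pi/(-f'(0))}\,\vertii{Y^{(0)}}_{H_\C}$, uniformly in $t$, with no help needed from the weight at large times. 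With this correction (and noting that the homogeneous part times the weight is bounded by $(1+\sqrt t)^{-1}$ times $\vertii{w^{(0)}}_{L^2(\R;\C)}$), your argument does yield the lemma.
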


\begin{proof}
First, assume $Y^{(0)} = \left(w^{(0)}, q^{(0)}\right)^\trans \in \left(C_\mathrm{c}^\infty(\R;\C)\right)^2$. Then, by construction
\begin{equation}\label{eq_ypm}
\partial_t Y_{\pm\infty} - \cL_{\pm\infty}^\# Y_{\pm\infty} = 0,
\end{equation}
which implies
\[
\partial_t^j \left(\cL_{\pm\infty}^\#\right)^k Y_{\pm\infty} = \left(\cL_{\pm\infty}^\#\right)^{j+k} Y_{\pm\infty} = P_{st}^{\pm\infty} \left(\cL_{\pm\infty}^\#\right)^{j+k} Y^{(0)} \quad \mbox{for all} \quad j,k \in \N_0
\]
and hence $\left(\cL_{\pm\infty}^\#\right)^k Y_{\pm\infty} \in C^\infty\left([0,\infty);H_\C\right)$ for any $k \in \N_0$. The coercivity/dissipativity estimate \eqref{dissipative} of Lemma~\ref{lem:dissipative} then yields $w_{\pm\infty} \in C^\infty\left([0,\infty);H^k(\R)\right)$ for any $k \in \N_0$.

\medskip

Having established qualitative regularity for regular initial data, we next prove a-priori estimates. Indeed, by testing \eqref{eq_ypm} it follows
\[
\frac{1}{2} \frac{\d}{\d t} \vertii{Y_{\pm\infty}}^2_{H_\C} - \inner{\cL_0^\# Y_{\pm\infty}}{Y_{\pm\infty}}{H_\C} = 0,
\]
so that with \eqref{dissipative} of Lemma~\ref{lem:dissipative} we deduce that
\[
\frac{1}{2} \frac{\d}{\d t} \vertii{Y(t,\cdot)}^2_{H_\C} + \kappa \vertii{Y(t,\cdot)}^2_{H_\C} + \eps Z \nu \vertii{\partial_\xi w(t,\cdot)}^2_{L^2(\R;\C)} \le 0.
\]
Integrating in time yields
\begin{equation}\label{est_reg_p1}
\frac{1}{2} \vertii{Y_{\pm\infty}(t,\cdot)}^2_{H_\C} + \int_0^t \left(\kappa \vertii{Y_{\pm\infty}(t',\cdot)}^2_{H_\C} + \eps Z \nu \vertii{\partial_\xi w_{\pm\infty}(t',\cdot)}^2_{L^2(\R;\C)}\right) \d t' \le \frac{1}{2} \vertii{Y^{(0)}}^2_{H_\C}.
\end{equation}
To obtain a point-wise bound in time on $\vertii{\partial_\xi w_{\pm\infty}(t,\cdot)}^2_{L^2(\R)}$, observe that from \eqref{decomp_frozen_r} and \eqref{eq_ypm} it follows
\[
\partial_t \partial_\xi w_{\pm\infty} - \nu \partial_\xi^3 w_{\pm\infty} - f'(0) \partial_\xi w_{\pm\infty} + s \partial_\xi^2 w_{\pm\infty} + \partial_\xi q_{\pm\infty} = 0.
\]
Testing with $t \partial_\xi w_{\pm\infty}$ gives with
\[
\Re \inner{\partial_\xi^2 w_{\pm\infty}(t,\cdot)}{\partial_\xi w_{\pm\infty}(t,\cdot)}{L^2(\R;\C)} = \frac 1 2 \Re \int_\R \partial_\xi \verti{\partial_\xi w_{\pm\infty}(t,\xi)}^2 \d\xi = 0
\]
and
\[
\Re \inner{\partial_\xi q_{\pm\infty}(t,\cdot)}{\partial_\xi w_{\pm\infty}(t,\cdot)}{L^2(\R)} \le \frac \nu 2 \vertii{\partial_\xi^2 w_{\pm\infty}(t,\cdot)}_{L^2(\R;\C)}^2 + \frac{1}{2 \nu} \vertii{q_{\pm\infty}(t,\cdot)}_{L^2(\R;\C)}^2
\]
that
\begin{align*}
& \frac{t}{2} \vertii{\partial_\xi w_{\pm\infty}(t,\cdot)}^2_{L^2(\R;\C)}
+ \int_0^t t' \left(\frac \nu 2 \vertii{\partial_\xi^2 w_{\pm\infty}(t',\cdot)}_{L^2(\R)}^2 - f'(0) \vertii{\partial_\xi w_{\pm\infty}(t',\cdot)}_{L^2(\R)}^2\right) \d t' \\
& \quad \le \frac{t}{2 \nu} \int_0^t \vertii{q_{\pm\infty}(t',\cdot)}_{L^2(\R;\C)}^2 \d t' + \frac 1 2 \int_0^t \vertii{\partial_\xi w_{\pm\infty}(t',\cdot)}_{L^2(\R;\C)}^2 \d t'.
\end{align*}
The combination with \eqref{inner_c} and \eqref{est_reg_p1} yields that there exists a constant $C < \infty$ such that
\[
\esssup_{t \ge 0} \frac{\sqrt t }{1 + \sqrt t} \vertii{\partial_\xi w_{\pm\infty}(t,\cdot)}_{L^2(\R;\C)} \le C \vertii{Y^{(0)}}^2_{H_\C}.
\]
The statement of the lemma now follows by density of $\left(C_\mathrm{c}^\infty(\R;\C)\right)^2$ in $H_\C$.
\end{proof}

\begin{proof}[Proof of Proposition~\ref{prop:frozen}~\eqref{item:riesz}]
We loosely follow the approach in \cite[Section 3.2, Proposition 3.5]{ArioliKoch2015}. First observe that by Lemma~\ref{lem:compact} and Lemma~\ref{lem:reg_frozen} the operator $\cR^\# P_{st}^{\pm\infty} \Pi^\# \in L(H_\C)$ is compact for any $t > 0$. This compactness implies thanks to \cite[Proposition 3.4]{ArioliKoch2015} that for every $t \ge 0$ the operator $P^\#_{st} \Pi^\# - P_{st}^{\pm\infty} \Pi^\#$ is compact as well. By estimate \eqref{est_semigroupL0_contr} of Lemma~\ref{lem:dissipative} and the Neumann series, we recognize that the operator $P_{st}^{\pm\infty} \Pi^\#$ has no spectrum outside the disc $\left\{ \mu \in \C: \verti{\mu} \leq e^{-\kappa t} \vertii{\Pi^\#}_{L(H_\C)}\right\}$. Now, since $P^\#_{st} \Pi^\#$ is a compact perturbation of $P_{st}^{\pm\infty} \Pi^\#$, the spectrum of $P^\#_{st} \Pi^\#$ in
\[
\left\{ \mu \in \C: \verti{\mu} > e^{-\kappa t} \vertii{\Pi^\#}_{L(H_\C)} \right\}
\]
only contains point spectrum $\sigma_\mathrm{p}\left(P^\#_{st} \Pi^\#\right)$ (cf.~Definition~\ref{def:spectrum} and \cite[Theorem~2.2.6]{KapitulaPromislow2013}). Using \cite[Chapter~2, Theorem~2.4]{Pazy1992}, we infer that
\[
\sigma_\mathrm{p}\left(P^\#_{st} \Pi^\#\right) \subseteq \{0\} \cup \left\{e^{\lambda t} \colon \lambda \in \sigma_\mathrm{p}\left(\cL^\# \Pi^\#\right)\right\} \subseteq \left\{ \mu \in \C: \verti{\mu} \le e^{\max\{-\kappa,\lambda^*(\eps)\} t} \right\},
\]
where we have used Proposition~\ref{prop:frozen}~\eqref{item:point} in the last inclusion and that $\sigma_\mathrm{p}\left(\cL^\# \Pi^\#\right) = \sigma_\mathrm{p}\left(\cL^\#\right) \setminus \{0\}$ since $0$ is not an eigenvalue of $\cL^\# \Pi^\#$. Altogether, we have
\[
\sigma\left(P^\#_{st} \Pi^\#\right) \subseteq \left\{ \mu \in \C: \verti{\mu} \le e^{\max\{-\kappa,\lambda^*(\eps)\} t} \vertii{\Pi^\#}_{L(H_\C)} \right\}
\]
because $\vertii{\Pi^\#}_{L(H_\C)} \ge 1$.

\medskip

Now, let $\max\{-\kappa,\lambda^*(\eps) \} < - \vartheta < 0$. Since the spectral radius $\lim_{n \to \infty} \vertii{\left(P^\#_{st} \Pi^\#\right)^n}^{\frac{1}{n}}_{L\left(H_\C\right)}$ of $P^\#_{st} \Pi^\#$ meets
\[
\lim_{n \to \infty} \vertii{\left(P^\#_{st} \Pi^\#\right)^n}^{\frac{1}{n}}_{L\left(H_\C\right)} = \lim_{n \to \infty} \vertii{P^\#_{st n} \Pi^\#}^{\frac{1}{n}}_{L\left(H_\C\right)}
\]
and
\[
\lim_{n \to \infty} \vertii{\left(P^\#_{st} \Pi^\#\right)^n}^{\frac{1}{n}}_{L\left(H_\C\right)} = \lim_{n \to \infty} \max\left\{\verti{\mu} \colon \mu \in \sigma\left(P^\#_{s t} \Pi^\#\right)\right\} \le e^{\max\{-\kappa,\lambda^*(\eps)\} t} \vertii{\Pi^\#}_{L(H_\C)},
\]
we have $\vertii{P^\#_{s t n} \Pi^\#}^{\frac{1}{n}}_{L\left(H_\C\right)} \leq e^{-\vartheta t}$ for $t$ and $n$ large enough. Thus there exists $C_{\vartheta} < \infty$ such that \eqref{est_semigroupFrozenWave} holds true.
\end{proof}

\begin{proof}[Proof of Proposition~\ref{prop:frozen}~\eqref{item:riesz2}]
Suppose that $Y \in H$ is real-valued. Then
\[
\Pi^{\#,0} Y \stackrel{\eqref{riesz_projection}}{=} \frac{1}{2 \pi i} \ointctrclockwise_{\verti{\lambda} = r} X_\lambda \, \d\lambda = \frac{r}{2 \pi} \int_0^{2\pi} X_{r e^{i\tau}} e^{i\tau} \, \d\tau, \quad \mbox{where} \quad X_\lambda := \left(\lambda \id_H - \cL^\#\right)^{-1} Y.
\]
From the equation
\[
\lambda X_\lambda - \cL^\# X_\lambda = Y, \quad \mbox{where} \quad \verti{\lambda} = r,
\]
and because the coefficients of $\cL^\#$ are real (cf.~\eqref{fw_op}), it follows that
\[
\overline\lambda \, \overline{X_\lambda} - \cL^\# \overline{X_\lambda} = Y.
\]
Because $r > 0$ us chosen sufficiently small we have $\lambda \in \rho(\cL^\#)$ (cf.~Definition~\ref{def:spectrum}~\eqref{item:resolvent_set}) and it follows due to uniqueness $\overline{X_\lambda} = X_{\overline\lambda}$. Hence,
\begin{eqnarray*}
\overline{\Pi^{\#,0} Y} &=& \frac{r}{2 \pi} \int_0^{2\pi} \overline{X_{r e^{i\tau}}} e^{-i\tau} \, \d\tau = \frac{r}{2 \pi} \int_0^{2\pi} X_{r e^{-i\tau}} e^{-i\tau} \, \d\tau = \frac{r}{2 \pi} \int_{-2\pi}^0 X_{r e^{i\tau}} e^{i\tau} \, \d\tau \\
&=& \frac{r}{2 \pi} \int_0^{2\pi} X_{r e^{i\tau}} e^{i\tau} \, \d\tau = \Pi^{\#,0} Y.
\end{eqnarray*}
By \eqref{riesz_projection} also $\overline{\Pi^\# Y} = \Pi^\# Y$, i.e., $\Pi^{\#,0},\Pi^\# \colon H \to H$ are well-defined and the estimates of the operator norms are trivial, too.
\end{proof}

%
\bibliography{eichinger_gnann_kuehn_fhn_v2} 
\bibliographystyle{plain} 
%

\end{document}